\newtheorem{theorem}{Theorem}[section]
\newtheorem{definition}[theorem]{Definition}
\newtheorem{lemma}[theorem]{Lemma}
\newtheorem{proposition}[theorem]{Proposition}
\newtheorem{corollary}[theorem]{Corollary}
\newtheorem*{claim}{Claim}
\newcommand{\A}{\mathcal{A}} 
\newcommand{\orth}[1]{#1^\bot}
\newcommand{\HC}[2]{{}^TQ_{#1, #2}}
\newcommand{\HCT}[2]{Q_{#1, #2}}
\newcommand{\Kpi}{\textup{Ker}(\Omega_\pi)}
\newcommand{\cZ}{\mathcal{Z}}
\newcommand{\Dom}{X_{\mathcal A}}
\newcommand{\DomE}{\mathfrak{X}_{\mathcal A}}
\newcommand{\cZE}{{\mathcal{Z}_{\textup{ext}}}}
\newcommand{\Z}{\mathbb{Z}} 
\newcommand{\N}{\mathbb{N}}
\newcommand{\R}{\mathbb{R}}
\newcommand{\T}{\mathbb{T}}
\newcommand{\FirstColumn}{0.35\textwidth}
\newcommand{\Function}[5]{\begin{array}{llll} #1 : & #2 & \rightarrow & #3 \\ & #4 & \mapsto & #5 \end{array}}
\date{}
\author{Przemysław Berk, Frank Trujillo}
\begin{document}
\renewcommand{\arraystretch}{1.2}
\sloppy
\title{Rigidity for piecewise smooth circle homeomorphisms and certain GIETs}
\maketitle
\begin{abstract}
In this article, we prove a rigidity property for a class of generalized interval exchange transformations (GIETs), which contains the class of piecewise smooth circle homeomorphisms.   

More precisely, we show that if two piecewise $C^3$ GIETs $f$ and $g$ with zero mean non-linearity are topologically conjugated, boundary-equivalent, have the same typical combinatorial rotation number and their renormalizations approach in an appropriate way the set of affine interval exchange transformations, then, their respective renormalizations converge to each other exponentially and the conjugating map is of class $C^1$.  In addition, if $f$ and $g$ are GIETs with rotation-type combinatorial data (i.e., they naturally define a piecewise smooth circle homeomorphism), have the same typical combinatorial rotation number, and they are break-equivalent as piecewise smooth circle homeomorphisms, then they are $C^1$-conjugated as circle maps. 

This work provides the first rigidity results for GIETs not smoothly conjugated to IETs, and generalizes a previous result of K. Cunha and D. Smania \cite{cunha_rigidity_2014}, concerning an exceptional class of circle maps, in the setting of piecewise $C^3$ circle homeomorphisms.
\end{abstract}
\section{Introduction}
\emph{Generalized interval exchange transformations} (GIETs) are piecewise smooth bijective maps of a compact interval with a finite number of discontinuities and non-negative derivative. They appear naturally as first return maps to Poincaré sections of smooth flows on surfaces.

In this work, we investigate the rigidity properties of GIETs, that is, the 
relation between the map's topological and smooth conjugacy classes. In other 
words, given two GIETs $f$ and $g$, which are topologically conjugated, we 
study the smoothness of this conjugacy.  {When the topological conjugacy class 
of a given map coincides with its smooth conjugacy class (or with an 
appropriate subset of it after adding trivial restrictions for the existence of 
a smooth conjugacy), we speak of \emph{rigidity}.  

We will show that rigidity is a typical phenomenon among  \emph{irrational} GIETs with \emph{zero mean non-linearity} (see Section \ref{sc:typical} for a discussion on the notion of typical used in this work and Sections \ref{sc:notations}, \ref{sc:GIETs} for definitions). For the sake of clarity of exposition, we postpone a rigorous statement of our main result (Theorem \ref{thm: rigidity}) to Section \ref{sc: results}.} 

Similar to the classical Poincaré classification theorem for circle maps, {an 
irrational GIET is semi-conjugated to a \emph{standard interval exchange 
transformation} (IET), that is, to a GIET that is a piecewise translation. 
Moreover, this semi-conjugacy is a conjugacy if and only if the GIET is minimal 
(see \cite[Proposition 7]{yoccoz_echanges_2005}).} %{In particular, 
%these maps have no periodic points. Let us point out that periodic points 
%provide strong obstructions for rigidity as the product of the derivatives 
%along a periodic orbit is a $C^1$-conjugacy invariant.}

%we consider only GIETs without periodic points, as these points provide unavoidable obstructions for rigidity. Indeed, the product of the derivatives along a periodic orbit is a $C^1$-conjugacy invariant. Moreover, we will restrict ourselves to a class of GIETs without periodic points, known as \emph{irrational} GIETs (see Section \ref{sc:notations} for a precise definition). }

The smooth conjugacy class of IETs in the space of GIETs was first studied by 
S. Marmi, P. Moussa, and J.C. Yoccoz in  \cite{marmi_linearization_2012}, where 
the authors show that, for a typical {minimal} IET $T$, the set of GIETs 
smoothly conjugated to $T$ defines a smooth submanifold of positive finite 
codimension (in the space of $C^{r}$ deformations of $T$ which are tangent to 
$T$ at the singularities). A generalization of this result for an exceptional 
class of IETs % of periodic type 
has been obtained by S. Ghazouani in \cite{ghazouani_local_2021}.  More recently, S. Ghazouani and C. Ulcigrai \cite{ghazouani_priori_2023} provided sufficient conditions for a GIET to be smoothly conjugated to an IET.

Rigidity is an important aspect in many dynamical systems, and such properties have been extensively studied in several settings closely related to GIETs. By a classical result of A. Denjoy, sufficiently smooth circle homeomorphisms with irrational rotation number are topologically conjugated to rotations. In this case, the regularity of the conjugacy map has been thoroughly studied by several authors \cite{herman_sur_1979, yoccoz_conjugaison_1984, katznelson_differentiability_1989, sinai_smoothness_1989, khanin_hermans_2009}. These results showed a crucial interplay between the regularity of the transformation being considered and the arithmetic properties of its rotation number. 

Circle homeomorphisms with singularities, i.e., (piecewise) smooth circle maps whose derivative either vanishes at a finite number of points (critical circle maps) or is discontinuous at a finite number of points while admitting non-vanishing left and right derivatives (circle diffeomorphisms with breaks), are, under quite general assumptions, topologically conjugated to circle rotations. Indeed, it follows from a result by J.C. Yoccoz \cite{yoccoz_il_1984} that critical circle maps with irrational rotation number and non-flat critical points are topologically conjugated to irrational rotations, while Denjoy's result remains valid in the case of singularities of break-type (see  \cite[Chapter VI]{herman_sur_1979}). However, the rigidity question is slightly different in this setting. In fact, we cannot expect a circle map with singularities to be smoothly conjugated to a rotation due to the presence of zeroes or jump discontinuities in the derivative of the map. Still, it is natural to ask about the regularity of the conjugacy between two circle homeomorphisms with the same rotation number and type of singularities. 

Rigidity for critical circle maps has been shown for sufficiently regular circle maps of irrational rotation number having exactly one singularity of odd criticality, see, e.g.,\cite{de_faria_rigidity_1999, de_faria_rigidity_2000, khanin_robust_2007, guarino_rigidity_2017, guarino_rigidity_2018}.  Similar rigidity results hold for circle diffeomorphisms with exactly one break, see, e.g., \cite{khanin_robust_2007, teplinskyi_smooth_2010, khanin_renormalization_2014, kocic_generic_2016, khanin_robust_2018, akhadkulov_rigidity_2021}. However, in this case, rigidity holds only for almost every rotation number and cannot be extended to every irrational rotation number (see \cite{khanin_absence_2013}).

The setting of critical circle maps with multiple singularities has yet to be 
explored more thoroughly. Still, some first results (see, e.g., the works of G. 
Estevez, E. de Faria, P. Guarino, D. Smania and M. Yampolsky  
\cite{estevez_real_2018, estevez_beau_2018, estevez_hyperbolicity_2021, 
estevez_renormalization_2022, estevez_renormalization_2023}) point to the 
existence of rigidity phenomena. Rigidity for $C^3$ critical circle maps with 
irrational rotation number and a finite number of singularities has been 
formally conjectured by G. Estevez, and E. de Faria in 
\cite{estevez_real_2018}, 

Concerning circle diffeomorphisms with several breaks, rigidity results have been recently obtained by K. Cunha, and D. Smania \cite{cunha_rigidity_2014} for an exceptional class of maps.

{Let us point out that many of the results mentioned above rely on 
\emph{renormalization} -- a powerful tool in studying numerous low-dimensional 
dynamical systems. Besides the previously cited examples, some notable classes 
that have been studied through renormalization are those of unimodal (see, 
e.g., \cite{feigenbaum_quantitative_1978, coullet_iterations_1978, 
feigenbaum_quasiperiodicity_1982, mcmullen_complex_1994, 
lyubich_feigenbaum-coullet-tresser_1999}) and Lorenz maps (see. e.g., 
\cite{winckler_renormalization_2010, martens_hyperbolicity_2014}).

}

In this article, we provide the first rigidity results for GIETs not smoothly 
conjugated to IETs and generalize the rigidity results in 
\cite{cunha_rigidity_2014} to typical circle diffeomorphisms with breaks of 
class $C^3$. %{Our methods make use of renormalization techniques for GIETs, 
%which we recall in Section \ref{sc:notations}.} 
{Before describing in detail these results, let us say a few words about the 
idea of typicality used throughout this work.

\subsection{A notion of typicality for {infinitely renormalizable} GIETs.}
\label{sc:typical}
The notion of \emph{typical} used in this work relies on the renormalization 
theory of GIETs and their \emph{combinatorial rotation number} (see Section 
\ref{sc:notations}), which can be seen as an analog of the notion of rotation 
number of circle homeomorphisms in the GIETs setting. {This notion means that 
the maps are in some sense `combinatorially typical', and it is inspired by the 
works of S. Marmi, P. Moussa, and J.C.~Yoccoz \cite{marmi_affine_2010, 
marmi_linearization_2012} where the authors show rigidity and existence of 
wandering intervals, respectively, for certain classes of combinatorially 
typical GIETs}. We say that a property $P$ holds for typical maps in a given 
subset of {infinitely renormalizable} GIETs (e.g., $C^3$ on $d$ intervals) if 
there exists a full-measure set of combinatorial rotation numbers 
$\mathfrak{C}$ (see Section \ref{sc:notations} for a definition of the measure 
on the space of combinatorial rotation numbers) such that any map in this class 
whose combinatorial rotation number belongs to $\mathfrak{C}$ satisfies the 
property $P$. As a slight abuse of terminology, we sometimes simply say that a 
typical map in the given class verifies the property $P$. Also, we might refer 
simply to `a typical map' without explicitly mentioning the full-measure set to 
which its combinatorial rotation number belongs. {We refer the reader to 
Section \ref{sc:notations} for additional details on combinatorial rotation 
numbers and the notion of tipicality}.

As mentioned before, it follows from  \cite[Proposition 
7]{yoccoz_echanges_2005} that irrational GIETs are semi-conjugated to IETs. As 
we shall see in Section \ref{sc:notations}, typical {infinitely renormalizable} 
GIETs are irrational, and the measure on the set of combinatorial rotation 
numbers is nothing more than the push-forward of the `Lebesgue measure' (more 
precisely, the product of Lebesgue and counting measures) on the set of IETs to 
the space of combinatorial rotation numbers. This will allow us to restate the 
notion of typicality above as follows: A property $P$ holds for typical maps in 
a given subset of {infinitely renormalizable} GIETs (on $d$ intervals) if there 
exists a full-measure set of IETs $\mathcal{C}$ (on $d$ intervals) such that 
any map in this class that is semi-conjugated to an IET in $\mathcal{C}$ 
satisfies the property $P$. For the sake of clarity, we write the statements of 
the results in this work in this way. 
%More precisely, we say that a property $P$ holds for typical maps in a given subset of GIETs (e.g., GIETs of class $C^3$) if there exists a full-measure set $\mathcal{C}$ of combinatorial rotation numbers such that any map in this class whose combinatorial rotation number belongs to $\mathcal{C}$ satisfies the property $P$. As a slight abuse of language, we sometimes say that a typical map in the given class verifies the property $P$. Also, we might refer simply to `a typical map' without explicitly mentioning the full-measure set to which its combinatorial rotation number belongs. 

Notice that this notion of typicality extends immediately to the class of circle diffeomorphisms with breaks. Indeed, piecewise smooth circle homeomorphisms can be naturally identified with GIETs of \emph{rotation type} (see \eqref{eq: rotation_type} for a precise definition and Section \ref{sc: p-homeomorphisms} for details on this identification).

{Let us point out that how this notion of typicality reflects on the 
(infinitely dimensional) parameter space of GIETs (see Section 
\ref{sc:notations}) is not well understood. In fact, although our definition of 
typical follows by analogy the case of circle diffeomorphisms and their 
classification according to their rotation number, in contrast to the latter 
case, when considering a finite-dimensional parameter family of GIETs, the 
measure of the parameters corresponding to infinitely renormalizable GIETs is 
expected to have measure zero. This indicates that the function associating to 
each map its rotation number is less regular and thus, although one can choose 
this family so that a positive measure set of combinatorial rotation numbers is 
realized (see, e.g., \cite{marchese_full_2018}), it is not clear whether 
imposing a full-measure condition on the combinatorial rotation number 
corresponds to `most' of the parameters corresponding to infinitely 
renormalizable GIETs, for example, to a set of parameters with full Hausdorff 
dimension. Moreover, this situation is not exclusive to GIETs, but it appears 
in the exact same way when considering the parameters corresponding to maps 
with irrational rotation number in finite-dimensional families of circle maps 
with singularities. 

Understanding how this or other notions of typical maps reflect on the parameter spaces of GIETs and circle maps with singularities is an interesting question in itself, as it is not completely clear which notion should be used to study the class of infinitely renormalizable maps, which, as mentioned before, is expected to be a very small subset in the parameter space and of zero measure when restricted to most finite-dimensional parameter families.  However, this question is out of the scope of this work, and we content ourselves with following a more standard approach to describe the maps according to their combinatorial rotation number, in the spirit of \cite{marmi_affine_2010, marmi_linearization_2012}.} 
%In this context, the role of circle rotations is played by IETs and, as mentioned before, similar to the circle maps case, a typical GIET with the same combinatorial rotation number as an IET is semi-conjugated to it. With this in mind, } %Thus, the set of combinatorial rotation numbers can be naturally identified with the set of IETs. %, and we refer to a GIET with \emph{typical combinatorial rotation number}, as having the same combinatorial rotation number as a typical IET. 

% typical means belonging to a certain full-measure set of combinatorial rotation numbers.  

%Throughout this work, when referring to a typical GIET, we mean a GIET with a typical combinatorial rotation number. %, that is, having the same combinatorial rotation number as a typical IET. 
%Similarly, by considering piecewise smooth circle homeomorphisms as GIETs  we refer to properties verified by typical piecewise smooth circle homeomorphisms will stand for a circle map that, when seen as a GIET, has a typical combinatorial rotation number. Thus, results for typical maps (either GIETs or piecewise smooth circle homeomorphisms) are understood as holding for transformations whose combinatorial rotation number belongs to a certain full-measure set in the set of combinatorial rotation numbers, or, equivalently, transformations that are semi-conjugated to a typical IET (see Section \ref{sc:notations}).

%Our rigidity result holds not only for typical piecewise smooth circle homeomorphisms but also for a more general class of transformations, namely, for
}

\subsection{Description of results}
\label{sc:description_results}

{This work concerns} GIETs with zero mean non-linearity satisfying 
the \emph{EC Condition} (see Definition \ref{def:EC}), where the latter 
assumption describes the way the map transforms under renormalization. { We 
point out that this property is known to hold for both typical $C^3$ circle 
diffeomorphisms with breaks and typical $C^3$ minimal GIETs with 4 or 5 
intervals when they have zero mean non-linearity. Indeed, this follows from 
recent results by S. Ghazouani and C. Ulcigrai in \cite{ghazouani_priori_2023}. 
More precisely, by Corollary 4.5.1 and Lemmas 4.6.1, 4.6.2 in 
\cite{ghazouani_priori_2023} typical GIETs with zero mean non-linearity having 
\emph{a priori bounds} verify the EC Condition (we refer the reader to 
\cite{ghazouani_priori_2023} for a precise definition of a priori bounds and to 
Sections 4.5  and 4.6 therein for details on the proof of this fact). Moreover, 
by Denjoy's inequality (see \cite[Proposition 7.4.4]{herman_sur_1979}) 
piecewise $C^2$ circle diffeomorphisms with breaks have a priori bounds and by 
\cite[Theorem D]{ghazouani_priori_2023} typical GIETs with 4 or 5 intervals 
have a priori bounds.}  
 
The main result of this article is Theorem \ref{thm: rigidity}, which states that for two typical topologically conjugated GIETs $f$ and $g$ of class $C^3$ with zero mean non-linearity, having the same \emph{boundary} (see Section \ref{sc:GIETs}), and satisfying the EC Condition, the conjugating map is of class $C^1$. This result is stated in a slightly more general fashion as the assumption of $C^0$ conjugacy can be replaced by having the same combinatorial rotation number. Indeed, as a consequence of Theorem \ref{thm: linearization} and Lemmas \ref{lem: same_model}, \ref{lm: conjisLandcont}, for two typical GIETs of class $C^3$ with zero mean non-linearity satisfying the EC Condition to be topologically conjugated, it is sufficient to assume that they have the same boundary and the same combinatorial rotation number.

As an intermediate step in the proof of our main result, we show in Theorem \ref{thm: linearization} that for a typical $C^3$ GIET with zero mean non-linearity, there exists an \emph{affine interval exchange transformation} (AIET), i.e., a piecewise affine GIET, which is $C^1$ conjugated to the original GIET. It is worth mentioning that this AIET is not uniquely determined. In this case, there will be uncountably many possible \emph{affine models}, i.e., AIETs in the smooth conjugacy class of the GIET. In fact, the logarithms of slopes of these affine models form an affine vector space whose dimension is equal to that of the stable space of the \emph{Zorich height cocycle} (see Subsection \ref{sc:notations} for precise definitions).

Let us point out that, by Theorems \ref{thm: affine_shadow}, \ref{thm: linearization} and a recent result by C. Ulcigrai and the second author \cite{trujillo_affine_2022}, a typical GIET $f$ with zero mean non-linearity is topologically conjugated to a unique IET, but it is generally not smoothly conjugated to it (unless $\omega_f = 0$, where $\omega_f$ is given by Theorem \ref{thm: affine_shadow}). 

The above results apply to the case of piecewise $C^3$ circle homeomorphisms, which, as mentioned before, can be naturally identified with $C^3$ GIETs of combinatorial data of rotation type (see Section \ref{sc: p-homeomorphisms}). In Theorem \ref{thm: circle_thm}, we prove that two typical break-equivalent circle homeomorphisms in this class are $C^1$ conjugated to each other. Let us stress that, as mentioned before, the notion of typical in this statement involves not only restrictions in the rotation number of the transformation (when seen as a circle map) but properties of the combinatorial rotation number of the map (when seen as a GIET). 

\subsection{Outline of the article}
The following section contains the preliminaries and notations used throughout this work. The notations adopted in this work for classical objects related to interval exchange transformations (e.g., length vector, intervals of continuity, objects associated with Rauzy-Veech induction, and Zorich acceleration) are exhibited in shortened form in Section \ref{sc:notations}. We refer to the lecture notes \cite{viana_ergodic_2006, yoccoz_echanges_2009, zorich_flat_2006}  for a detailed account of these objects and their properties.

Sections \ref{sc:GIETs} and \ref{sc: p-homeomorphisms} contain facts and additional background on GIETs and piecewise smooth circle homeomorphisms. Particularly, on the boundary operator and a canonical identification of circle diffeomorphisms with breaks and GIETs. %Other objects are defined in the subsections that follow. In particular, in 
In Section \ref{sc: defR}, we present an acceleration of the Zorich cocycle, which plays a crucial role in the proofs of the results of this article.

In Section \ref{sc: results}, we state our main results precisely, while in Section \ref{sc: outline}, we provide an outline and proofs (relying on some auxiliary propositions, which are proved in the subsequent sections) of these results. Sections \ref{sc: existenceofaffineshadow} and \ref{sc: conjugacy} contain the proofs of these auxiliary results.

\begin{center}
\textsc{Acknowledgements}
\end{center}
The authors would like to thank Corinna Ulcigrai for presenting the problem and for many fruitful discussions. Both authors were supported by the \emph{Swiss National Science Foundation} Grant $200021\_188617/1$. The first author was also supported by \emph{National Science Centre (Poland)} grant OPUS $2022/45/B/ST1/00179$. The second author was partially supported by the UZH Postdoc Grant, grant no. FK-23-133, during the preparation of this work.

\section{Preliminaries}
\label{sc:preliminaries}
\subsection{Notations}\label{sc:notations}
%In this article we are going to use the following notation presented in \cite{trujillo_affine_2022} as well as other notions introduced afterwards.

An interval exchange transformation (IET) is a bijection of an interval $I$ (often $I=[0,1)$), which is a right-continuous piecewise translation with a finite number of discontinuities. It is often parametrized by the length vector, which contains the lengths of the exchanged intervals, and a permutation describing the exchange order. The exchanged intervals are indexed by a finite alphabet. We will adopt the following notations.

\begin{center}
\begin{longtable}{|>{\sc\footnotesize}l |>{\hspace{0.5pc}}l|}\hline
	Finite alphabet  & $\mathcal A$ of $d\ge 2$ elements\\\hline
	Length vector & $\lambda\in\R_+^{\A}$ or $\lambda\in\Delta_{\mathcal A}:=\{\lambda\in\R_+^{\A} \mid  \ |\lambda|=\sum_{\alpha\in\A}\lambda_\alpha=1\}$ \\\hline
	Permutation & $\pi\in \mathfrak G_{\mathcal A}:=\{(\pi_0,\pi_1) \mid \pi_0,\pi_1:\mathcal A\to\{1,\dots,d\}\text{ bijections}\}$\\\hline
	Irreducible permutations & $\mathfrak G_{\mathcal A}^0:=\{\pi\in\mathfrak{G}_{\A}\mid \pi_1\circ\pi_0^{-1}(\{1,\ldots,k\})\!=\!\{1,\ldots,k\}\Rightarrow k=d\}$\\\hline
	Space of IETs & $\R_+^{\A}\times\mathfrak G_{\mathcal A}^0$\\\hline
	Space of normalized IETs & $\Delta_{\mathcal A}\times\mathfrak G_{\mathcal A}^0$ \\\hline
%& \\ 
\end{longtable}
\end{center}
\vspace{-0.5cm}

Elements in $ \R_+^{\A}\times\mathfrak G_{\mathcal A}^0$ are in one-to-one 
correspondence with \emph{irreducible} IETs defined on intervals of the form 
$[0, |\lambda|)$, that is, with IETs not admitting invariant subintervals of 
the form $[0, t)$, with $0 < t < |\lambda|$. {We endow the space of 
(normalized) IETs with the product of Lebesgue and counting measures. Notice 
that every IET preserves the Lebesgue measure on its interval of definition. 
Moreover, almost every irreducible IET is uniquely ergodic (see 
\cite{masur_interval_1982, veech_gauss_1982}).}

Given $(\lambda,\pi)\in \R_+^{\A}\times\mathfrak G_{\mathcal A}^0$, which encodes an 
IET $T:[0,|\lambda|)\to [0,|\lambda|)$, we associate the following objects.

\begin{center}
\begin{longtable}{|>{\sc\footnotesize}l |>{\hspace{0.5pc}}m{0.59\textwidth}|}\hline
Intervals exchanged  & $\{I_\alpha(T)\}_{\alpha\in\mathcal A}$, where $|I_\alpha(T)|=\lambda_\alpha$\\\hline
Endpoints of the partition & $0=u_0(T)<u_1(T)<\ldots<u_{d-1}(T)<u_d(T)=|\lambda|$\\\hline
Translation matrix & $\Omega_\pi: \R^\A \to \R^\A,$ where \newline \phantom{} \hskip0.1cm $\Omega_{\alpha, \beta} = \left\{ \begin{array}{cl} +1 & \text{if } \pi_1(\alpha) > \pi_1(\beta) \text{ and } \pi_0(\alpha) < \pi_0(\beta), \\
-1 & \text{if } \pi_1(\alpha) < \pi_1(\beta) \text{ and } \pi_0(\alpha) > \pi_0(\beta), \\
0 & \text{otherwise.} 
\end{array}\right.$ \\\hline
%& \\
\end{longtable}
\end{center}
\vspace{-0.5cm}

A classical induction procedure, known as \emph{Rauzy-Veech induction}, associates to a typical IET $T$ another IET with the same number of intervals by inducing $T$ in some appropriate subinterval. This induction can be repeated infinitely many times for IETs verifying the so-called \emph{Keane's condition} (see \cite{keane_interval_1975}). Moreover, this procedure naturally induces a directed graph structure on $\mathfrak{G}^0_d$ called the \emph{Rauzy graph}. Each connected component in this graph is called a \emph{Rauzy class}. The infinite path in the Rauzy graph associated with an IET $T$ is called \emph{combinatorial rotation number} of $T$.

\begin{center}
\begin{longtable}{|>{\sc\footnotesize}m{0.4\textwidth} |>{\hspace{0.5pc}}m{0.53\textwidth}|}\hline
IETs satisfying Keane's condition & $X_{\mathcal A}^+\subset\R_+^{\A}\times\mathfrak G_{\mathcal A}^0$ \\\hline
 Winner and loser symbols & $\alpha_\epsilon = \pi_\epsilon^{-1}(d)$, for $\epsilon \in \{0, 1\}$ \\\hline
%Rauzy-Veech induction & $\mathcal{RV}:X_{\mathcal A}^+\to X_{\mathcal A}^+$ \\
Type of $(\lambda, \pi) \in X_{\mathcal A}^+$ & $\epsilon(\lambda, \pi) = \left\{ \begin{array}{cl}
0 & \text{ if } \lambda_{\alpha_0} > \lambda_{\alpha_1}, \\ 
1 &\text{ if }\lambda_{\alpha_0} < \lambda_{\alpha_1}.
 \end{array}\right.$\\\hline

 Rauzy-Veech induction & \hskip-0.2cm$\Function{\mathcal{RV}}{X_{\mathcal A}^+}{X_{\mathcal A}^+}{T}{T\mid_{\left[0, |\lambda| -  \lambda_{\alpha_{\epsilon(T)}} \right)}}$ \\\hline
 Combinatorial rotation number & $\gamma(T)$ \\\hline
Cocycle notation & For $F: X \to X$, $\phi: X \to GL(d, \Z)$ and $n\geq 0$, \newline \phantom{} \hskip0.15cm $\phi^{(n)}(x) = \phi(F^{n - 1}(x)) \dots  \phi(x).$  \\\hline %\newline \phantom{} \hskip0.15cm  $\phi_{n}(x) = \phi(x) \dots  \phi(F^{n-1}(x)).$\\
Rauzy-Veech matrix & $\Function{A}{X_{\mathcal A}^+}{SL(d,\Z)}{T}{Id+ E_{\alpha_{\epsilon(T)}, \alpha_{1 - \epsilon(T)}}}$ \\\hline

%& \\
\end{longtable}
\end{center}
\vspace{-0.5cm}

The combinatorial rotation number of an infinitely renormalizable IET is 
\emph{infinitely complete ($\infty$-complete)}, i.e., each symbol in the 
alphabet appears as the winner symbol infinitely many times. Moreover, {there 
exists at least one IET verifying Keane's condition associated with any 
$\infty$-complete path, and this correspondence is one-to-one when restricted 
to uniquely ergodic IETs (see Corollary 5 and Proposition 6 in 
\cite{yoccoz_echanges_2005}).   %We identify the set of combinatorial rotation 
%numbers with the set of IETs, which we endow with the restriction of the 
%Lebesgue measure on  $\R_+^\A \times \mathfrak G_{\mathcal A}^0$ (more 
%precisely, the product of the Lebesgue measure on $\R_+^\A$ and the counting 
%measure on $\mathfrak G_{\mathcal A}^0$) to give a meaning to the notion of 
%almost every combinatorial rotation number.
}

The Rauzy-Veech matrices are useful in describing several aspects of this induction procedure. In fact, given $n \ge 0$, $T  = (\lambda, \pi) \in X_\A^+$ and denoting $\mathcal{T} = \mathcal{RV}^n(T)$, the cocycles $A^{-1}, {}^TA: X_{\mathcal A}^+\to SL(d,\Z)$ associated to $A$ verify the following. 
\begin{itemize}
\item $(|I_\alpha(\mathcal{T})|)_{\alpha \in \A} = (A^{-1})^{(n)}(T) \lambda.$
\item If $\left(\mathcal{T}\mid_{I_\alpha(\mathcal{T})}\right)_{\alpha \in \A}= \left(T^{h_\alpha}\mid_{I_\alpha(\mathcal{T})}\right)_{\alpha \in \A}$ then $(h_\alpha)_{\alpha \in \A} = \big({}^TA\big)^{(n)}(T) \overline{1},$ where $\overline{1} \in \R_+^\A$ is the vector whose coordinates are all equal to $1$. 
\end{itemize} 

Sometimes it will be helpful to consider normalized variants of this procedure. For this purpose, we introduce the following notation.

\begin{center}
\begin{longtable}{|>{\sc\footnotesize}m{\FirstColumn} |>{\hspace{0.5pc}}l  |}\hline
	Normalized vector & $\tilde\xi:=\frac{\xi}{|\xi|}$, where $\xi\in \R_+^{\mathcal A}$  \\\hline 
	Normalized operator& $\tilde D(\xi):=\widetilde{D(\xi)}$, where $D$ is a positive matrix\\\hline
	Normalized IETs satisfying Keane's condition &  $X_{\mathcal A}\subseteq \Delta_{\A}\times\mathfrak G_{\mathcal A}^0$\\\hline
	Normalized RV-induction & $\widetilde{\mathcal{RV}}:X_{\mathcal A}\to X_{\mathcal A}$\\\hline
%& \\
\end{longtable}
\end{center}

The normalized RV-induction $\widetilde{\mathcal{RV}}$ admits an infinite (but no finite) invariant measure equivalent to the Lebesgue measure. However, an appropriate acceleration of this procedure, known as \emph{Zorich map} and that we denote by $\widetilde{\mathcal{Z}}$, possesses a unique ergodic invariant probability measure $\mu_{\tilde{\mathcal Z}}$ equivalent to the Lebesgue measure.

\begin{center}
\begin{longtable}{|>{\sc\footnotesize}l |>{\hspace{0.5pc}}l|}\hline
	Zorich acceleration time & $z:X_{\mathcal A}^+\to \N$\\\hline
	Zorich induction & \hskip-0.2cm$\Function{\mathcal Z}{X_{\mathcal A}^+}{X_{\mathcal A}^+}{(\lambda,\pi)}{\mathcal{RV}^{z(\lambda,\pi)}(\lambda,\pi)}$\\\hline
	Zorich renormalization & \hskip-0.2cm$\Function{\tilde{\mathcal Z}}{X_{\mathcal A}}{X_{\mathcal A}}{(\lambda,\pi)}{\widetilde{\mathcal{RV}}^{z(\lambda,\pi)}(\lambda,\pi)}$\\\hline
	Zorich matrix & \hskip-0.2cm$\Function{B}{X_{\mathcal A}^+}{SL(d,\Z)}{(\lambda,\pi)}{\prod_{i = 0}^{z(\lambda, \pi) - 1}A(\mathcal{RV}^i(\lambda,\pi))}$\\\hline%\prod_{i = 0}^{z(\lambda, \pi) - 1}A(\mathcal{RV}^i(\lambda,\pi))A(\lambda,\pi)\cdot A(\mathcal{RV}(\lambda,\pi)\cdot\ldots\cdot A(\mathcal{RV}^{z(\lambda,\pi)}(\lambda,\pi)
	Zorich length cocycle & $B^{-1}:X_{\mathcal A}^+\to SL(d,\Z)$\\\hline
	Zorich height cocycle & ${}^TB:X_{\mathcal A}^+\to SL(d,\Z)$.\\\hline
%& \\
\end{longtable}
\end{center}

As before, the cocycles above describe several aspects of this accelerated induction. Moreover, they are integrable with respect to the measure $\mu_{\tilde{\mathcal Z}}$ and thus admit an Oseledets filtration. 

\begin{center}
	\begin{longtable}{|>{\sc\footnotesize}m{\FirstColumn}|>{\hspace{0.5pc}}l|}\hline
		Osdeledet's filtration for the height cocycle   & $E^s(\lambda,\pi)\subset E^{cs}(\lambda,\pi)\subset\R^{\A}$ \\\hline
		Osdeledet's filtration for the length cocycle  & $F^s(\lambda,\pi)\subset F^{cs}(\lambda,\pi)\subset \R^{\A}$ \\\hline
	\end{longtable}
\end{center}
%\vspace{-0.5cm}

However, having an Oseledets splitting associated with these cocycles will be helpful. For this purpose, we consider a natural extension of the Zorich renormalization.
%\vspace{-0.2cm}
\begin{center}
\begin{longtable}{|>{\sc\footnotesize} m{3.5cm} | >{\hspace{0.5pc}}l|}\hline
	Extended space of \newline parameters & $\mathfrak X_{\mathcal A}^{+}=\left\{(\tau,\lambda,\pi) \in \R_+^\A \times  X_{\mathcal A}^{+} \,\left|\, \begin{array}{l}  \, \sum_{\pi_0(\alpha)\le k}\tau_{\alpha}>0;  \\ \sum_{\pi_1(\alpha)\le k}\tau_{\alpha}<0; \\
	 \text{for }1\le k \le d - 1. \end{array}\right\}\right.$\\\hline
	Extended Zorich \newline induction & \hskip-0.2cm$\Function{{\mathcal Z}_{\textup{ext}}}{\mathfrak X_{\mathcal A}^{+}}{\mathfrak X_{\mathcal A}^{+}}{(\tau,\lambda,\pi)}{(B^{-1}(\lambda,\pi)\tau,\mathcal Z(\lambda,\pi))}$\\\hline
	Normalized extended space of parameters &  $\mathfrak X_{\mathcal A} = \left\{(\tau,\lambda,\pi)\in\mathfrak X_{\mathcal A}^{+}\mid (\lambda,\pi)\in X_{\mathcal A}; \, \langle \Omega_{\pi}\tau,\lambda \rangle=1\right\} $\\\hline
	Extended Zorich \newline renormalization &  \hskip-0.2cm$\Function{\tilde{\mathcal Z}_{\textup{ext}}}{\mathfrak X_{\mathcal A}}{\mathfrak X_{\mathcal A}}{(\tau,\lambda,\pi)}{(|B^{-1}(\lambda,\pi)\lambda |B^{-1}(\lambda,\pi)\tau,\tilde{\mathcal Z}(\lambda,\pi))}$ \\\hline
	%$\tilde{\mathcal Z}_{\textup{ext}}:\mathfrak X_{\mathcal A}\to\mathfrak X_{\mathcal A}$, $\tilde{\mathcal Z}_{\textup{ext}}(\tau,\lambda,\pi)=(|B^{-1}(\lambda,\pi)\lambda |B^{-1}(\lambda,\pi)\tau,\tilde{\mathcal Z}(\lambda,\pi))$
%& \\
\end{longtable}
\end{center}
%\vspace{-0.5cm}

The extended Zorich renormalization $\tilde{\mathcal Z}_{\textup{ext}}$ also admits an invariant probability measure $\mu_{\tilde{\mathcal Z}_{\textup{ext}}}$ and projects to $\tilde {\mathcal{Z}}$ via a natural projection $p:\mathfrak X_{\mathcal A}\to X_{\mathcal A}$. 
	The length and height cocycles can be thus naturally extended to $\mathfrak 
	X_{\mathcal A}$. For the sake of simplicity, we denote them by the same 
	letters. 
	Since $\tilde{\mathcal Z}_{\textup{ext}}$ is invertible and the cocycles are integrable, for a.e. $(\tau,\lambda,\pi)\in \mathfrak X_{\mathcal A}$ we obtain the following. 

\begin{center}
\begin{longtable}{|>{\sc\footnotesize}m{6cm} |>{\hspace{0.5pc}}l|}\hline
Osdeledet's splitting for the height cocycle  ${}^T B: \mathfrak X_{\mathcal A} \to SL(d, \Z)$ & $E^s(\tau,\lambda,\pi)\oplus E^c(\tau,\lambda,\pi)\oplus E^u(\tau,\lambda,\pi)=\R^{\A}$ \\\hline
Osdeledet's splitting for the length cocycle  $B^{-1}: \mathfrak X_{\mathcal A} \to SL(d, \Z)$ & $F^s(\tau,\lambda,\pi)\oplus F^c(\tau,\lambda,\pi)\oplus F^u(\tau,\lambda,\pi)=\R^{\A}$ \\\hline
\end{longtable}
\end{center}
%\vspace{-0.5cm}

Let us point out that, for a.e. $(\tau,\lambda,\pi)\in\mathfrak X_{\mathcal A}$, we have $E^s(\tau,\lambda,\pi)=E^s(\lambda,\pi)$ and $E^{c}(\tau,\lambda,\pi)\oplus E^s(\tau,\lambda,\pi)=E^{cs}(\lambda,\pi)$.

%which correspond to the vector spaces associated with negative, zero, and positive Lyapunov exponents, respectively. 

The operators defined above, as well as Keane's condition, extend naturally to 
\emph{generalized interval exchange transformations} (GIETs), that is, to 
piecewise smooth right continuous bijections of an interval with a finite 
number of discontinuities. We also consider \emph{affine interval exchange 
transformations} (AIETs), which are piecewise linear GIETs. {In the following, 
we restrict our discussion to infinitely renormalizable GIETs.}

For a given GIET, we denote the associated partition, the endpoints of the 
intervals in this partition, its combinatorial rotation number, and the 
associated inductions/renormalizations, as in the case of IETs. A GIET $f$ is 
parametrized by the permutation $\pi$ describing the order in which the 
intervals are exchanged, the lengths of the exchanged intervals 
$\{I_\alpha(f)\}_{\alpha \in \A}$ before and after applying the function, and a 
rescaling of the different branches $f\mid_{I_\alpha}: I_\alpha \to 
f(I_\alpha)$. {As before, we will consider only maps whose underlying 
permutation is irreducible.}

\begin{center}
\begin{longtable}{|>{\sc\footnotesize}m{4.8cm} |>{\hspace{0.5pc}}l|}\hline
Zoom operator & $\Xi: \textup{Diff}^r([a, b], [c, d]) \to \textup{Diff}^r([0, 1])$  \\\hline
Space of GIETs & $\mathcal{X}^r_\A = \mathfrak R \times \mathfrak{G}^0_\A \times \textup{Diff}^r([0, 1])^\A,  $ \\& where $\mathfrak{R}=\{(u,w)\in\R_+^{\A}\times \R_+^{\A}\mid |u|=|w|\}$\\\hline%&$ \left\{ (v, w, \pi, (f_\alpha)_{\alpha \in \A}) \in \R_+^\A \times \R_+^\A \times \mathfrak{G}^0_\A \times \textup{Diff}^r([0, 1])^\A \,\left| \,  |v| = |w|  \right\} \right. $ \\\R_+^\A \times \R_+^\A 
Average log-slope of a GIET & $L(f) = (L_\alpha(f))_{\alpha \in \A}$, \quad where $f \in \mathcal{X}^r_\A$ and \\\hline
& $L_\alpha(f) = \ln  \left( \frac{1}{|I_\alpha(f)|} \int_{I_\alpha(f)} Df\mid_{I_\alpha(f)}(s)ds \right)$ \\ \hline
Space of AIETs & $\textup{Aff}_\A = \mathfrak R \times \mathfrak{G}^0_\A $ \\\hline
Log-slope vector of an AIET& $L(S) \in\R^{\mathcal A}$, \quad where $S\in \textup{Aff}_{\mathcal A}$\\\hline
AIETs with log-slope $\omega$ semi-conjugated to an IET $T$ & $\textup{Aff}(T,\omega):=  \left\{ S\in \textup{Aff}_{\A} \,\left|\, \begin{array}{l}  \, \gamma(T)=\gamma(S) \text{ and } \\ 
S \text{ has log-slope }  \omega \end{array}\right\}\right. $ \\\hline
%& \\
\end{longtable}
\end{center}
 \vspace{-0.5cm}
 
  {%Namely, whenever we state that a property holds for a \emph{typical} GIET, 
  %we mean that it holds for GIETs, whose combinatorial rotation numbers 
  %correspond to a full measure set of IETs. 
Combinatorial rotation numbers of GIETs with $d$ intervals belong to the space of all admissible infinite paths (with respect to the directed graph structure) in the Rauzy graph $\mathfrak{G}^0_d$, which we denote by $\mathfrak{C}^0_d$. Notice that any admissible finite path $\gamma$ on $\mathfrak{G}^0_d$ determines a positive measure set of IETs whose combinatorial rotation number starts by $\gamma$. Using this fact, we can naturally define a measure on $\mathfrak{C}^0_d$ so that, for any admissible finite path $\gamma$ on $\mathfrak{G}^0_d$, the measure of the set of admissible infinite paths starting by $\gamma$ is equal to the positive measure set of IETs associated with $\gamma$. This measure coincides with the pushforward of the Lebesgue measure on the set of IETs by the map $T \mapsto \gamma(T)$ and, in particular, gives full weight to the set of $\infty$-complete paths. 
 
Using this measure, we can define a notion of \emph{typicality for GIETs} as follows. We say that a property $P$ holds for typical maps in a given subset of GIETs on $d$ intervals if there exists a full-measure set of combinatorial rotation numbers $\mathfrak{C} \subseteq \mathfrak{C}^0_d$ such that any map in this class whose combinatorial rotation number belongs to $\mathfrak{C}$ satisfies the property $P$. As an abuse of language, we sometimes state this by saying that a typical map in the given class verifies the property $P$.
 
We say that a GIET is \emph{irrational} if it is infinitely renormalizable and its rotation number is $\infty$-complete.  By \cite[Proposition 7]{yoccoz_echanges_2005}, any irrational GIET is semi-conjugated to an IET with the same rotation number. It follows trivially from the previous remarks that a typical GIET is irrational and, therefore, typical GIETs are semi-conjugated to IETs.

 Let us point out that, in contrast to the case of IETs, not all infinitely renormalizable GIETs have an $\infty$-complete rotation number and, in parameter space, is expected for a generic GIET not to have $\infty$-complete combinatorial rotation number. On the other hand, infinitely renormalizable GIETs without wandering intervals and irreducible underlying permutations are expected to be irrational. 
} 

Let us recall that given two GIETs $f,g: I \to I$, we say that $f$ is \emph{semi-conjugated} to $g$  if there exists a non-decreasing continuous surjective map $h: I \to I$ such that $f\circ h=h\circ g$ and $h(I_\alpha(f)) = I_\alpha(g)$, for all $\alpha \in \A$.

It follows from our definition of GIET that for any $f\in\mathcal X^r_{\mathcal A}$, there exists $c>0$ such that $D_-f, D_+f>c$, wherever those derivatives are defined. In other words, the GIETs we consider do not possess critical points.

Notice that although the height and length cocycles also extend trivially to the GIET setting (in fact, they depend only on the combinatorial rotation number of a GIET), and the height cocycle still describes the return times for the induced transformation, the length cocycle no longer describes the lengths of the intervals in the partition of the induced transformation. 

\subsection{Generalized interval exchange transformations}
\label{sc:GIETs}

Given a GIET $f: I \to I$ such that $D(\log(Df))\in L^1(I)$ we denote its \emph{mean-non-linearity} by
\[ \mathcal{N}(f) = \int_{I} D\log(Df(x))\,dx.\]
Given a partition $\{I_\alpha\}_{\alpha \in \A}$ of an interval $I$, we denote by 
$C^r(\bigsqcup_{\alpha \in \A} I_\alpha)$ the set of real functions on $I$ such that, for each $\alpha \in \A$, the restriction to $I_\alpha$ extends to a $C^r$- function on $\overline{I_\alpha}$.  

Recall that for any IET $T = (\lambda, \pi) \in \Delta_{\A} \times \mathfrak{G}^0_d$  the endpoints of the associated partition  are identified with the singular points $\{s_1, \dots, s_\kappa\}$ in the associated translation surface (see, e.g.,  \cite{yoccoz_echanges_2009, viana_ergodic_2006}), where 
\[\kappa = \dim(\textup{Ker}(\Omega_\pi)) + 1.\]
 The relations between the partition's endpoints and the translation surface's singularities can be encoded as follows.  Let $p = \pi_1 \circ \pi_0^{-1}$ denote the \emph{monodromy invariant} of $\pi$ and define $\sigma: \{0, 1, \dots, d\} \to \{0, 1, \dots, d\}$ by 
\begin{equation}
\label{eq: perm_sigma}
\sigma(j) = \left\{  
    \begin{array}{lcl} 
        p^{-1}(1) - 1 & \text{ if } & j = 0, \\
       d& \text{ if } & p(j) = d, \\
        p^{-1}(p(j) + 1) - 1 & \text{  } & \text{otherwise}. \\
     \end{array}  \right.
\end{equation}
Then $\sigma$ is a well-defined bijection, having exactly $\kappa$ different orbits, and each orbit corresponds to endpoints of the partition that are identified with the same singularity in the associated translation surface. Moreover, $\sigma$ allows to define a base for $\textup{Ker}(\Omega_\pi)$ given by the vectors 
\[ \{ \lambda(\mathcal{O}) \in \R^\A \mid \mathcal{O} \text{ is an orbit of } \sigma \text{ not containing } 0 \}, \]
where 
\[ \lambda(\mathcal{O})_{\pi_0(j)} = \left\{
    \begin{array}{lcl}
        1 & \text{ if } & j \in \mathcal{O} \text{ and } j - 1 \notin \mathcal{O}, \\
        -1 & \text{ if } & j \notin \mathcal{O} \text{ and } j - 1 \in \mathcal{O}, \\
        0 & \text{  } & \text{otherwise}. \\
    \end{array} 
\right.\]
For details, we refer the interested reader to \cite{viana_ergodic_2006}.

 Following \cite{marmi_linearization_2012}, given an irreducible permutation 
 $\pi \in \mathfrak{G}^0_d$, a partition $\{I_\alpha\}_{\alpha \in \A}$ of $I$, 
 and denoting by $s(u_i)$ the singularity associated to the endpoints $u_i$, 
 for $i = 0, \dots, d$ of the underlying IET, we define the \emph{boundary 
 operator} associated to $\pi$ and $\{I_\alpha\}_{\alpha \in \A}$ as 
\[\Function{\mathfrak{B}}{C^0\Big(\bigsqcup_{\alpha \in \A} I_\alpha\Big)}{\R^\kappa}{\varphi}{(\mathfrak{B}_s(\varphi))_{1 \leq s \leq \kappa}}, \qquad \mathfrak{B}_s(\varphi) = \sum_{\substack{ s(u_i) = s, \\ 0 \leq i \leq d}} \partial \varphi(u_i), \]
where $\partial \varphi(u_i)$ denotes the difference between the right and left limits of $\varphi$ at $u_i$ (assuming $\varphi$ is defined as $0$ outside of $I$), namely
\[ \partial \varphi(u_i) = \left\{ 
    \begin{array}{lcl}
        \varphi(u_0) & \text{ if } & i = 0, \\
        \lim_{x \to u_i^+} \varphi(x) - \lim_{x \to u_i^-} \varphi(x)   & \text{ if } & 0 < i < d, \\
        -\lim_{x \to u_d^-} \varphi(x) & \text{ if } & i = d.\\
    \end{array} 
\right.\]
We define the \emph{boundary} $\mathcal{B}(f)$ of a GIET $f$  over an irreducible permutation $\pi \in \mathfrak{G}^0_d$,  as 
\begin{equation}
\label{eq: boundary}
\mathcal{B}(f) = \mathfrak{B}(\log Df),
\end{equation}
where $\mathfrak{B}$ is the boundary operator associated to $\pi$ and $\{I_\alpha(f)\}_{\alpha \in \A}$. We say that two GIETs $f$ and $g$ with the same combinatorial data are \emph{boundary-equivalent} iff $\mathcal{B}(f)=\mathcal{B}(g)$.

 For AIETs, the boundary operator is closely related to the projection of the log-slope vector to the space $\Kpi$, where $\pi$ is the subjacent permutation.  Indeed, if $S$ is an AIET with log-slope $\omega$ over $\pi \in \mathfrak{G}^0_d$ and $\mathcal{O}$ is the orbit of $\sigma_\pi$ associated to the singularity $s \in \{s_1, \dots, s_\kappa\}$, then 
\[ \mathfrak{B}_s(S) = \langle \omega, \lambda(\mathcal{O}) \rangle.\]
In particular, the log-slope vectors of two AIETs over $\pi \in \mathfrak{G}^0_d$ having the same boundary project to the same vector in $\Kpi$. 

The boundary operator for GIETs verifies the following.

\begin{proposition}
\label{prop: boundary_properties}
Let $f $  be a $C^1$ GIET  with genus $\kappa$ satisfying Keane's condition. Then the following holds.
\begin{enumerate}
\item $\mathcal{B}(\mathcal{RV}(f)) = \mathcal{B}(f)$.
\item $\mathcal{N}(f) = 0$  if and only if $\sum_{1 \leq s \leq \kappa} b_s = 0,$ where  $(b_s)_{1 \leq s \leq \kappa} = \mathcal{B}(f).$
\item Let $g$ be a $C^1$ GIET over $\pi$. If $f$ and $g$ are $C^1$ conjugated, then $\mathcal{B}(f) = \mathcal{B}(g)$.
\end{enumerate}
\end{proposition}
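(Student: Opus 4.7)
The plan is to address the three items in the order (2), (3), (1). Items (2) and (3) are essentially local computations at each endpoint, while (1) requires tracking how the $\sigma$-orbit structure interacts with Rauzy--Veech induction.

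For (2), apply the fundamental theorem of calculus on each continuity interval $[u_{i-1}, u_i)$ of $f$ and telescope:
\[
\mathcal{N}(f) \;=\; \int_I D\log Df\,dx \;=\; \sum_{i=1}^d \bigl(\log Df(u_i^-) - \log Df(u_{i-1}^+)\bigr) \;=\; -\sum_{i=0}^d \partial(\log Df)(u_i),
\]
using the conventions $\partial\varphi(u_0)=\varphi(u_0^+)$ and $\partial\varphi(u_d)=-\varphi(u_d^-)$. Since the orbits of $\sigma$ partition $\{0,\dots,d\}$ into the $\kappa$ singularity classes, $\sum_{s=1}^\kappa b_s = \sum_{i=0}^d \partial(\log Df)(u_i)$, whence $\mathcal{N}(f) = -\sum_s b_s$, giving the claimed equivalence.

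For (3), let $h$ be a $C^1$ conjugacy with $h \circ f = g \circ h$, so that the chain rule gives
\[
\log Df(x) \;=\; \log Dg(h(x)) + \log Dh(x) - \log Dh(f(x)).
\]
Apply $\partial$ at each $u_i(f)$. Since $h$ is a $C^1$ homeomorphism with $Dh > 0$, $\log Dh$ is continuous on $I$, so $\partial(\log Dh)(u_i) = 0$ for $0<i<d$, while the contributions at $u_0$ and $u_d$ are $\log Dh(u_0)$ and $-\log Dh(u_d)$ respectively. Because $h$ sends the partition of $f$ bijectively onto that of $g$ with $h(u_i(f)) = u_i(g)$, one has $\partial(\log Dg\circ h)(u_i(f)) = \partial(\log Dg)(u_i(g))$. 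Within each $\sigma$-orbit $\mathcal O$, the identity $f(u_{\sigma(j)}^+) = f(u_j^-)$ (which is precisely the content of \eqref{eq: perm_sigma}) makes $\sum_{j\in\mathcal O}(\log Dh(f(u_j^+)) - \log Dh(f(u_j^-)))$ telescope to zero, with the boundary values $\log Dh(u_0)$ and $-\log Dh(u_d)$ absorbed into the orbits containing $0$ and $d$. Hence $\mathfrak{B}_s(\log Df) = \mathfrak{B}_s(\log Dg)$ for every $s$.

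For (1), $\mathcal{RV}(f)$ is the first-return map of $f$ to a subinterval $J \subset I$: on all but one continuity interval of the new partition $\mathcal{RV}(f) = f$ (so the jumps of $\log D$ are inherited), while on the remaining piece $\mathcal{RV}(f) = f \circ f$. At the unique newly-created endpoint $u^\star$, the chain rule yields $\partial(\log D\mathcal{RV}(f))(u^\star) = \partial(\log Df)(u^\star) + \partial(\log Df)(f(u^\star))$, and $f(u^\star)$ is exactly the endpoint of the old partition that disappears under RV. The standard description of how one RV step acts on $\sigma$ (see \cite{viana_ergodic_2006}) shows that $u^\star$ and $f(u^\star)$ belong to the same $\sigma$-orbit while all other orbits are unaffected, so summing over each orbit yields $\mathcal{B}(\mathcal{RV}(f)) = \mathcal{B}(f)$. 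The main obstacle is the combinatorial verification underlying (1) and the correct handling of the orbits of $0$ and $d$ in (3); both reduce to finite casework using the RV type $\epsilon\in\{0,1\}$ and the explicit formula \eqref{eq: perm_sigma}, with no deeper input required.
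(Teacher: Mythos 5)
The paper itself does not prove this proposition; it refers to Ghazouani--Ulcigrai for the proof, so your attempt has to stand on its own. Items (2) and (3) do stand: the telescoping computation giving $\mathcal{N}(f)=-\sum_s b_s$ is correct, and in (3) the identity $f(u_{\sigma(j)}^+)=f(u_j^-)$, together with the boundary conventions (namely $f(u_{\sigma(0)}^+)=u_0$ and $f(u_j^-)=u_d$ when $\sigma(j)=d$, which absorb the terms $\log Dh(u_0)$ and $-\log Dh(u_d)$), does make the contribution of $\partial(\log Dh)-\partial(\log Dh\circ f)$ vanish on each $\sigma$-orbit; that argument, though terse, is sound.

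Item (1), however, has a genuine gap: the key identity you assert at the ``unique newly-created endpoint'' is false, and the bookkeeping is misdescribed. In the step of type $0$ (the last domain interval $I_{\alpha_0}$ is merely shortened by removing $f(I_{\alpha_1})$) no new interior endpoint is created at all: the interior endpoints $u_1,\dots,u_{d-1}$ persist and only the right endpoint of the domain moves to $|I|-|f(I_{\alpha_1})|$, a point whose $f$-image is generically not an endpoint of the old partition. In the step of type $1$, the new endpoint is $u^\star\in I_{\alpha_1}$ with $f(u^\star)=u_{d-1}$, which does \emph{not} disappear (it becomes the new right endpoint of the domain); the endpoint that disappears is $u_d$. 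Moreover a direct computation gives $\partial(\log D\mathcal{RV}(f))(u^\star)=\log Df(u_{d-1}^+)$, whereas your formula predicts $\partial(\log Df)(u^\star)+\partial(\log Df)(f(u^\star))=\log Df(u_{d-1}^+)-\log Df(u_{d-1}^-)$. The missing piece $-\log Df(u_{d-1}^-)$ reappears at the new right endpoint, and a further term $-\log Df(u_d^-)$ is added to the jump at the right endpoint of $I_{\alpha_1}$, because the left limit of $f^2$ there involves $f(u_{\pi_0(\alpha_1)}^-)=u_d$. So the corrections are spread over \emph{both} endpoints of the interval with return time two and over the old and new right endpoints of the domain, and the raw sums over more than one $\sigma$-orbit change; equality of boundaries only emerges after matching how the endpoints are reassigned to singularities under $\pi\mapsto\pi'$, i.e.\ how $\sigma_{\pi'}$ relates to $\sigma_\pi$. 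Your concluding appeal to ``finite casework'' cannot rescue the argument as written, since the intermediate identities that casework would have to confirm are incorrect; this part needs to be redone along the lines of Marmi--Moussa--Yoccoz or Ghazouani--Ulcigrai, which is exactly the source the paper cites.
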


For a proof of the properties above, see \cite{ghazouani_priori_2023}. By the first assertion in the above proposition and the previous discussion on the boundary operator for AIETs, we have the following.

\begin{proposition}
\label{prop: boundary_AIETs}
Given two AIETs $f$, $g$ over $\pi \in \mathfrak{G}^0_d$, with zero mean non-linearity and log-slope vectors $\omega_f,$ $\omega_g$, 
\[ \mathcal{B}(f) = \mathcal{B}(g) \iff  \pi_{\Kpi}(\omega_f) =  \pi_{\Kpi}(\omega_g).\]
\end{proposition}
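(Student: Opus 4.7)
The plan is to interpret both sides of the asserted equivalence as linear conditions on $\omega_f - \omega_g$ and then reduce everything to pairings against the basis of $\Kpi$ formed by the vectors $\{\lambda(\mathcal{O})\}_{0 \notin \mathcal{O}}$ introduced just before the proposition.

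First, I would observe that the identity $\mathfrak{B}_s(S) = \langle \omega, \lambda(\mathcal{O}_s)\rangle$ recalled in the paragraph above the proposition implies, by linearity, that the $s$-th component of $\mathcal{B}(f) - \mathcal{B}(g)$ equals $\langle \omega_f - \omega_g, \lambda(\mathcal{O}_s)\rangle$, where $\mathcal{O}_s$ denotes the orbit of $\sigma_\pi$ attached to the singularity $s$. Since the vectors $\lambda(\mathcal{O})$ with $0 \notin \mathcal{O}$ form a basis of $\Kpi$, the condition $\pi_{\Kpi}(\omega_f) = \pi_{\Kpi}(\omega_g)$, i.e. that $\omega_f - \omega_g$ is orthogonal to $\Kpi$, is equivalent to $\mathfrak{B}_s(f) = \mathfrak{B}_s(g)$ holding at every singularity $s$ \emph{except} possibly the one $s_0$ associated with the orbit containing $0$.

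The forward implication $\mathcal{B}(f) = \mathcal{B}(g) \Rightarrow \pi_{\Kpi}(\omega_f) = \pi_{\Kpi}(\omega_g)$ is then immediate (and is essentially the observation already noted in the text before the proposition). For the reverse implication, I would invoke Proposition \ref{prop: boundary_properties}(2): since $f$ and $g$ both have zero mean non-linearity, the components of $\mathcal{B}(f)$ and of $\mathcal{B}(g)$ each sum to zero. Hence agreement on the $\kappa - 1$ components indexed by $s \neq s_0$ forces agreement on the remaining one, yielding $\mathcal{B}(f) = \mathcal{B}(g)$.

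The only real subtlety lies at the singularity $s_0$, whose associated vector $\lambda(\mathcal{O}_0)$ does not appear in the chosen basis of $\Kpi$. The zero mean non-linearity hypothesis is precisely what bridges the gap between the $(\kappa-1)$-dimensional information encoded in $\pi_{\Kpi}(\omega)$ and the full $\kappa$-dimensional boundary vector; I do not expect any other step to require significant work, so the entire argument should fit in a few lines once the basis/pairing reformulation above is written down carefully.
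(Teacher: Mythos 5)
Your proof is correct and is essentially the argument the paper intends (the paper only gives a one-line justification citing the preceding discussion and Proposition \ref{prop: boundary_properties}): the pairing formula $\mathfrak{B}_s(S)=\langle\omega,\lambda(\mathcal{O}_s)\rangle$ plus the fact that the $\lambda(\mathcal{O})$ with $0\notin\mathcal{O}$ form a basis of $\Kpi$ handles all singularities but the one attached to the orbit of $0$, and the vanishing of the sum of the boundary components recovers that last one. The only minor point worth noting is that Proposition \ref{prop: boundary_properties} is stated for GIETs satisfying Keane's condition, which you do not assume; however, for an AIET the sum $\sum_i \partial\log DS(u_i)$ vanishes by a direct telescoping computation (equivalently, $\mathcal{N}(S)=0$ automatically), so your bridging step is valid without that hypothesis.
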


We now define an equivalent of a length cocycle for GIETs.
Let $f$ be a GIET and $T$ be such that $\gamma(f)=\gamma(T)$. Let $A^n:=A(\lambda,\pi)\cdot\ldots\cdot A(\mathcal{RV}^{n-1}(\lambda,\pi))$. For any $n\in\N$ consider a $d\times d$ matrix  $A^n(f)$  defined as follows
\begin{equation}\label{matrixf}
A^n_{\alpha\beta}(f):=\sum_{i=1}^{A_{\alpha\beta}^n}\frac{|f^{m_i(\alpha, \beta)}(I_\alpha(\mathcal{RV}^nf))|}{|I_\alpha(\mathcal{RV}^nf)|},
\end{equation}
where $m_i(\alpha, \beta)$ is the $i$-th return time of the interval $I_\alpha(\mathcal{RV}^nf)$ to the interval $I_\beta(f)$ via $f$.
Note that, by the mean value theorem, we have
\begin{equation}\label{matrixS}
A_{\alpha\beta}^n(f):=\sum_{i=1}^{A_{\alpha\beta}^n}Df^{m_i(\alpha, \beta)}(x_{i}^{n}(\alpha,\beta)),
\end{equation}
for some $x^{n}_{i}(\alpha,\beta)\in I_\alpha(\mathcal RV^nf)$.
%Analogously we define matrices $A^n(S)$ for an  AIET $S$ such that $\gamma(S)=\gamma(T)$. 

The importance of the matrices defined above follows from the fact that
\begin{equation}\label{lengthrelation}
\big(|I_{\alpha}(f)|\big)_{\alpha\in\mathcal A}=A^n(f)\big(|I_{\alpha}(\mathcal{RV}^nf)|\big)_{\alpha\in\mathcal A}.%\quad\text{ and }\quad
%\big(|I_{\alpha}(S)|\big)_{\alpha\in\mathcal A}=A^n(S)\big(|I_{\alpha}(\mathcal{RV}^nS)|\big)_{\alpha\in\mathcal A}.
\end{equation}
%{Whenever we are considering accelerations of $\mathcal{RV}$-induction, the 
%notation for the corresponding matrices will be switched to the ones we used 
%for said acceleration.}

\subsection{Piecewise smooth circle homeomorphisms}
\label{sc: p-homeomorphisms}

Our results apply to the setting of piecewise smooth circle homeomorphisms with a finite number of branches, but as their formulations and hypotheses will be slightly different, we introduce the following notations. 

A \emph{piecewise smooth circle homeomorphism} $f: \T \to \T$ is a smooth orientation preserving homeomorphism, differentiable away from countable many points, so-called \emph{break-points}, at which left and right derivatives, denoted by $Df_-$, $Df_+$ respectively, exist but do not coincide. For any $x \in \T$, we define the \emph{jump ratio} of $f$ at $x$ by 
\[ \sigma_f(x) = \frac{Df_-(x)}{Df_+(x)}.\]
We denote the \emph{set of break points} of $f$ by 
\[BP(f) = \{x \in \T \mid \sigma_f(x) \neq 1 \}.\]
Notice that if the map has a finite number of break points, then the product of  all jump ratios at the break points is equal to one, namely,
\begin{equation}
\label{eq: product_jump_ratios}
\prod_{x \in BP(f)} \sigma_f(x) = 1.
\end{equation}
Two piecewise smooth circle homeomorphisms $f$ and $g$ are said to be \emph{break-equivalent} if they are topologically conjugated by a map $h \in \textup{Hom}(\T)$, verifying $h \circ f = g \circ h$, such that $h(BP(f)) = BP(g)$ and $\sigma_f(x) = \sigma_g(h(x)),$ for any $x \in BP(f)$.

We denote by $P_d^r(\T)$ the set of $C^r$ piecewise smooth circle homeomorphisms with exactly $d$ breaks, having a break at $\varphi(0)$, and such that the orbits of the breaks are two by two disjoint. 

For any $d \geq 3$, we identify maps in $P_{d - 1}^r(\T)$ with GIETs on $d$ intervals as follows.  Let 
\begin{equation}
\label{eq: parametrization}
\Function{\varphi}{[0, 1)}{\T}{x}{e^{2\pi i x}}.
\end{equation}
Then, for any $f \in P_{d - 1}^r$, the map $F = \varphi^{-1} \circ f \circ \varphi$ is an infinitely renormalizable GIET of class $C^r$ on $d$ intervals. Notice that this map cannot be seen as a GIET on a smaller number of intervals since, by assumption, $f$ has exactly $d$ break points, and the orbits of these breaks are two by two disjoint. 

In this case, the boundary $\mathcal{B}(F)$ defined in \eqref{eq: boundary} is nothing more than the vector given by the logarithm of the jump ratios of $f$ at the break points. Indeed, by definition of $F$, it is easy to see that the combinatorial datum $\pi = (\pi_0, \pi_1)$ of $F$ satisfies
 \begin{equation}
\label{eq: rotation_type}
\pi_1 \circ \pi_0^{-1}(i) - 1 = i + k \textup{ (mod } d), \quad \text{ for } i = 1, \dots, d, 
\end{equation}
for some $k \in \{0, \dots, d - 2\}.$ {We refer to GIETs with such 
combinatorial data as \emph{GIETs of rotation type} and to any $\pi$ like in 
\eqref{eq: rotation_type} as the \emph{combinatorial data of rotation type}.} 
In this case, an explicit computation shows that the permutation $\sigma$ 
defined in \eqref{eq: perm_sigma} is given by 
\begin{equation}
\label{eq: perm_sigma_rotation}
\sigma(j) = \left\{  
    \begin{array}{lcl} 
        d- k - 1 & \text{ if } & j = 0, \\
       d & \text{ if } & j = d - k - 1, \\
       0 & \text{ if } & j = d, \\
       j & \text{  } & \text{otherwise}. \\
     \end{array}  \right.
\end{equation}
Denote by $0 = u_0 < u_1 < \cdots < u_d = 1$ the endpoints of the partition associated to $F$. Notice that $BP(f) = \{ \varphi(u_j) \mid 0 \leq j < d;\, j \neq d - k -1 \}$.

By \eqref{eq: perm_sigma_rotation}, $u_0, u_{d - k - 1}$ and $u_d$ correspond to the same singularity (in the associated translation surface), which we denote by $s_1$. In addition, since $\varphi(u_{d - k - 1}) \notin BP(f)$, %$D_-F(u_{d - k - 1}) = D_+F(u_{d - k - 1})$ and we have
\begin{align*}
\mathfrak B_{s_1}& (\log DF)  =   \log D_+F(u_0) +  \log D_+F(u_{d - k - 1})  -  \log D_-F(u_{d - k - 1})  -  \log D_-F(u_1) \\
& = \log D_+f(\varphi(0)) + \log D_+f(\varphi(u_{d - k - 1})) - \log D_-f(\varphi(u_{d - k - 1}))  - \log D_-f(\varphi(0)) \\
& = \log \sigma_f(\varphi(0)).
\end{align*} 
Furthermore, the remaining $d - 2$ singularities, which we denote by $s_2, \dots, s_{d - 1}$, are in one-to-one correspondence with the remaining endpoints. By associating $s_j$ to $u_{j - 1}$ if $2 \leq j \leq d - k - 1$, and to $u_j$ if $d - k - 1 < j < d$, we can express the boundary of $f$ as
\begin{equation}
\label{eq: boundary_p_homeos}
\mathcal{B}(F)_j = \left\{  
    \begin{array}{lcl} 
        \log \sigma_f(\varphi(u_{j - 1})) & \text{ if } & j \leq d - k - 1, \\
        \log \sigma_f(\varphi(u_j)) & \text{ if } & j > d - k - 1.\\
     \end{array}  \right.
\end{equation}
The argument above shows a slightly more general fact, namely, if $T$ is a GIET on $d$ intervals with combinatorial datum verifying \eqref{eq: rotation_type} and denoting by $0 = u_0 < u_1 < \cdots < u_{d + 1} = 1$, then we can express the boundary of $T$ as
\begin{equation}
\label{eq: boundary_rotation_type}
{\small
\mathcal{B}(T)_j = \left\{  
    \begin{array}{lcl} 
        \partial \log DT(u_{0}) + \partial \log DT(u_{d - k - 1}) + \partial \log DT(u_{1})  & \text{ if } & j = 1,\\
        \partial \log DT(u_{j - 1})  & \text{ if } & 1 < j \leq d - k - 1, \\
        \partial \log DT(u_{j})  & \text{ if } & d - k - 1 < j < d.\\
     \end{array}  \right.}
\end{equation}
Finally, let us mention that given a GIET $T$ on $d \geq 3$ intervals with a combinatorial data of rotation type, the circle homeomorphism induced by $T$ and given by $\varphi \circ T \circ \varphi^{-1}$ is a piecewise smooth circle homeomorphism with \emph{at most} $d$ breaks and thus it is not necessarily a map in $P_{d - 1}(\T)$. In fact, if the combinatorial datum of $T$ verifies \eqref{eq: rotation_type}, for some $k \in \{0, \dots, d - 2\}$, and $0 = u_0 < u_1 < \cdots < u_d = 1$ are the endpoints of the  associated partition of $[0, 1)$, then the circle homeomorphism $\varphi \circ T \circ \varphi^{-1}$ belongs to $P_{d - 1}(\T)$ if and only if $\partial \log DT (u_{d - k - 1}) = 0$ and $\mathcal{B}(T)_j \neq 0,$ for all $j = 1, \dots, d - 1$. 

Moreover, it follows from \eqref{eq: boundary_rotation_type} that for any two boundary-equivalent GIETs with combinatorial data of rotation type, the associated piecewise smooth homeomorphisms (having at most $d$ break points) have the same jump ratio in at least $d - 2$ break points. However, notice that it is still possible for two GIETs with combinatorial data of rotation type to be topologically conjugated and boundary-equivalent, while the associated piecewise smooth homeomorphisms are topologically conjugated but not break-equivalent (in this case, the associated circle maps would necessarily have $d$ breaks).

%Consider also a matrix $A_\gamma({\mathcal R^nS})$ such that
%\[
%(A_\gamma({\mathcal R^nS}))_{\alpha\beta}:=\sum_{i=1}^{(A_\gamma)_{\alpha\beta}}\frac{|S^{h_i}(I_\beta^{n+1})|}{|I_\beta^{n+1}|}.
%\]
%Note that we also have
%\[
%(A_\gamma({\mathcal R^nf}))_{\alpha\beta}:=\sum_{i=1}^{(A_\gamma)_{\alpha\beta}}DS^{h_i}(x_{\beta,i}^{n+1}).
%\]

\subsection{An appropriate subsequence of Zorich times}\label{sc: defR}

We will consider an acceleration $\mathcal{R}: X \to X$ of the Zorich extension $\mathcal{Z}_{\textup{ext}}$, defined over a $\mu_\cZE$-full-measure subset $X \subset \DomE$, and of the form
\[ \mathcal{R}(\tau, \lambda, \pi) = \cZE^{\!\!\!\!\!\!\!\!N(\tau,\lambda, \pi)}(\tau, \lambda, \pi), \]
where $N: X \to \N$ is a measurable function. We denote by 
\[Q: X \to SL(d, \Z),\] the associated accelerated Zorich cocycle, which is given by 
\[Q(\tau, \lambda, \pi) = \prod_{i=0}^{N(\tau,\lambda, \pi)-1}B(\cZE^{\!\!\!\!\!\!\!i} \ \ (\tau, \lambda, \pi)),\]
and, as before, we denote for any $ n > m$,
\[ \HCT{m}{n}(\tau, \lambda, \pi) = \prod_{i=m}^{n-1} Q\left(\mathcal{R}^{i}(\tau, \lambda, \pi)\right),\]
\[ \HCT{-n}{-m}(\tau, \lambda, \pi)  = \big(\HCT{0}{n - m}\left(\mathcal{R}^{-n}(\tau, \lambda, \pi) \right)\big)^{-1}.\]
Note that $Q$ defines accelerated length and height cocycles, given by $Q^{-1}$ and ${}^T Q$, respectively. As in the case of $\mathcal Z_{\textup{ext}}$, we denote by $\tilde{\mathcal R}$ the normalized variant of $\mathcal R$. 

Given  $(\tau, \lambda, \pi)  \in X,$ we denote its \emph{iterates} with respect to $\mathcal{R}$ by $(\tau^n, \lambda^n, \pi^n)$, the \emph{induction intervals} by $(I^n_\alpha(\tau, \lambda, \pi))_{\alpha \in \A}$ and the associated \emph{heights} by $(q^n_\alpha(\tau, \lambda, \pi))_{\alpha \in \A}$.  If there is no risk of confusion, we will sometimes omit the base point $(\tau, \lambda, \pi)$ in all of the previous notations. 

The function $N$ will be picked so that $\mathcal{R}$ will satisfy the following.
\begin{proposition}
\label{prop: generic_condition}
For any $(\tau, \lambda, \pi)  \in X$ the following holds: % and denote  \[m_n = \sum_{k = 0}^{n -1} N(\tau^k, \lambda^k, \pi^k),\] for any $n \geq 1$. 
\begin{enumerate}
\item \label{sublinear_times} The sequence $m_n = \sum_{k = 0}^{n -1} N(\tau^k, \lambda^k, \pi^k)$ is sublinear, that is, 
\[ \sup_{n \geq 1} \frac{m_n}{n} < +\infty.\]

\item \label{balanced_base} For any $n \geq 0$, $$\min_{\alpha, \beta \in \A} \frac{\lambda^n_\alpha}{\lambda^n_\beta} > c_0,$$ where $c_0 > 0$ is a constant independent of  $n$. 

More precisely, $\lambda^n\in A^{\gamma}(\R^{\mathcal A}_+)$, where $\gamma$ is a finite path in the Rauzy-Veech algorithm, independent of $n$ and $(\tau, \lambda, \pi)$, such that the associated matrix $A^{\gamma}$ is positive.
%$(\tau^n, \lambda^n, \pi^n) \in \Delta_{\gamma_0}$, for any $n \geq 1$,  where 
%\[  \Delta_{\gamma_0} = \{ x \in X \mid x \text{ follows the path } \gamma \text{ under RV induction}\}, \] for some finite path $\gamma_0$ in the Rauzy path, independent of $n$ and $(\tau, \lambda, \pi)$, such that the associated matrix $A_{\gamma}$ is positive.

\item \label{bounded_central}  For any $n\geq 1,$ \[ C^{-1} \leq \left\|\HC{0}{n}\mid_{E^c(\tau, \pi, \lambda)} \right\| < C,\]
where $C > 0$ is a constant independent of $n$. %the upper bound is independent of  $(\tau, \lambda, \pi)$ 
\item \label{exp_growth} There exists $C>0$ such that $\sup_{n\in\N}\frac{\log\|\HCT{0}{n}\|}{n}<C$,
\item \label{Roth_type} For any $\tau > 0$, there exists $C > 0$ such that
\[ \left\| \HCT{n - 1}{n} \right\| \leq C \left\|  \HCT{0}{n} \right\|^\tau,\]
for every $n \geq 1$. In particular, by condition \ref{exp_growth}, there exists $C'>0$ such that
\[
\left\|\HCT{0}{n} \right\|\le C'e^{n\tau}.
\]
\end{enumerate}
\end{proposition}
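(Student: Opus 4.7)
The plan is to take $\mathcal{R}$ to be the first-return map of the extended Zorich renormalization $\cZE$ to a carefully chosen positive-measure subset $W \subset \DomE$, with $N(\cdot)$ the corresponding first-return time. The subset $W$ is constructed as the intersection of two features. First, a cylinder of initial Rauzy-Veech combinatorics corresponding to a fixed finite path $\gamma$ in the Rauzy graph whose Rauzy-Veech matrix $A^\gamma$ is strictly positive (such $\gamma$ exists by connectedness of the relevant Rauzy class). Second, a Pesin-type block on which the Oseledets splitting $E^s \oplus E^c \oplus E^u$ of the height cocycle depends continuously and the cocycle restricted to $E^c$ is uniformly controlled. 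The domain $X$ is then the full-measure subset of $\DomE$ consisting of points returning to $W$ infinitely often, which has full measure by ergodicity.

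Four of the five conditions then follow by classical ergodic-theoretic ingredients. The balanced-base assertion is immediate from the cylinder structure of $W$: at each acceleration time $m_n$, the algorithm has just finished composing with $A^\gamma$, so $\lambda^n \in A^\gamma(\R_+^\A)$ and the strict positivity of $A^\gamma$ produces the uniform constant $c_0$. Sublinearity of $m_n$ follows from Kac's lemma, which gives integrability of $N$ with mean $1/\mu_\cZE(W)$, combined with Birkhoff's ergodic theorem applied to the induced system $(\mathcal{R},W)$. The exponential-growth bound on $\|\HCT{0}{n}\|$ follows from integrability of $\log\|{}^T B\|$ with respect to $\mu_\cZE$, which passes to integrability of $\log\|Q\|$ on $W$ by a Kac-type computation, and Birkhoff then yields a finite top Lyapunov exponent. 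For the Roth-type bound, Birkhoff applied to $N \in L^1$ gives $N(\mathcal{R}^{n-1}x) = o(n)$ pointwise on $X$, and a further Birkhoff application to $\log\|{}^T B\|$ combined with sublinearity of $m_n$ yields $\log\|\HCT{n-1}{n}\| = o(n)$; paired with the linear growth of $\log\|\HCT{0}{n}\|$ coming from positivity of the top Lyapunov exponent, this produces, for every prescribed $\tau > 0$, the desired inequality after absorbing finitely many exceptional indices into the constant $C$.

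The principal obstacle is the uniform boundedness of $\|\HCT{0}{n}\mid_{E^c}\|$, as Oseledets alone guarantees only subexponential growth on the zero-exponent subspace, while condition \ref{bounded_central} demands a genuine two-sided bound. Overcoming this requires exploiting finer structural properties of the Zorich height cocycle on its central Oseledets subspace—specifically, the existence of an invariant bilinear form on $E^c$ (in the spirit of Forni's neutral-bundle results for the Kontsevich-Zorich cocycle) with respect to which the cocycle acts isometrically. The Pesin-block component of $W$ is then precisely what is needed to ensure that this invariant form is comparable to the Euclidean norm on $E^c$ with constants that are uniformly bounded across successive returns to $W$, converting the isometric action on the canonical metric into the bounded action on the Euclidean norm that condition \ref{bounded_central} requires. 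This is the step that cannot be dispatched by ergodic-theoretic generalities alone and constitutes the main technical content of the construction.
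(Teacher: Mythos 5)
Your proposal is correct and follows essentially the same route as the paper: the acceleration is taken to be the return map to a positive-measure set combining a cylinder with positive Rauzy--Veech matrix (yielding condition (2)) with a set of Oseledets-regular points on which the central space is controlled (yielding condition (3)), while conditions (1), (4), (5) come from Kac's lemma, Birkhoff's theorem and integrability of the induced cocycle, which is exactly the paper's (citation-based) argument. The only substantive difference is that for the two-sided bound on $\HC{0}{n}\mid_{E^c}$ you sketch the mechanism yourself — a bounded invariant structure on the neutral direction (which in this setting really comes from the equivariant, permutation-type action of the height cocycle on $\Kpi$, i.e.\ the relative part, rather than from Forni's KZ neutral-bundle results) transferred to $E^c$ at return times via uniform angle control — whereas the paper simply outsources this to \cite{trujillo_affine_2022}.
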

\begin{proof}
	Conditions \ref{balanced_base} and \ref{bounded_central} are given by recurrence to an appropriate set of positive $\mu_{\mathcal Z}$ measure in $\Dom$ (for Condition \ref{balanced_base} we refer the reader to \cite{viana_ergodic_2006}, while for Condition \ref{bounded_central} to \cite{trujillo_affine_2022}). Condition \ref{sublinear_times} follows from the ergodicity of Zorich cocycle ($N$ is given by the visits to the set specified above), Conditions \ref{exp_growth} and \ref{Roth_type} follow from its integrability (which is inherited by the induced cocycle). We refer the reader to \cite{viana_lectures_2014} for a proof of this fact.
\end{proof}

Notice that, in view of Condition \ref{sublinear_times}, the Oseledet's splitting of the cocycle ${}^TB$ and ${}^T Q$ coincide.
 
%  invariant Oseledet's subspaces $E^\epsilon(\tau, \lambda, \pi)$ associated with the height cocycle ${}^TB$, where $\epsilon \in \{s, c, u\}$, form the Oseledet's splitting of the accelerated height cocycle ${}^T Q$. For the sake of simplicity, we will denote the Oseledet's splitting associated with ${}^T Q$ by 

In \cite{cunha_renormalization_2013}, the authors define a metric $d_{C^r}$ on $C^r$ piecewise smooth diffeomorphisms with $d$ branches and fixed combinatorial data (this metric is closely related to the one introduced in \cite{ghazouani_priori_2023}), where the distance $d_{C^r}(f, g)$, of any two maps $f$ and $g$ in this class, is given by
\[
\max_{\alpha\in\mathcal A}\left\{\left\|\Xi \big( f\mid_{I_\alpha(f)}\big)-\Xi \big( g\mid_{I_\alpha(g)} \big)\right\|_{C^r}+ \big||I_\alpha(f)|-|I_\alpha(g)|\big|+\big||f(I_\alpha(f))| -|g(I_\alpha(g)) |\big|\right\},
\]
where $\Xi$ is the {zoom operator}. This operator, defined in both articles mentioned above, associates to any homeomorphism between two bounded intervals its rescaling by affine transformations to a homeomorphism of the unit interval. More precisely, if $h: I \to J$ is a homeomorphism between two closed bounded intervals, then
\[ 
\Xi(h) = A_1 \circ h \circ A_2,
\]
where $A_1: J \to [0, 1]$, $A_2: [0, 1] \to I$ are bijective orientation preserving homeomorphisms.

%The following follows from \cite{ghazouani_priori_2023}.
%
%\begin{theorem}[Corollary 4.5.1 in \cite{ghazouani_priori_2023}]
%\label{thm: convergence_to_affine}
%For a.e. IET $T = (\lambda, \pi) \in \Delta_{\A} \times \mathfrak{G}^0_d$ and for any GIET $f$ such that $\gamma(f) = \gamma(T)$  and $\mathcal N(f)=0$ we have
%\[ \max _{\alpha\in\mathcal A}\textup{d}_{C^2}\left(\Xi(\cZ^n f)\mid_{I_\alpha(\mathcal Z^nf)}, Id\right) = O(c^n),\] 
%for some $0 < c < 1$. 
%\end{theorem}

%{From the previous result, we can deduce the convergence of renormalization 
%for RV induction since the Zorich times are sublinear. \\} 

%A direct consequence of the previous result is the following. 

The following is a direct consequence of \cite[Corollary 3.6]{marmi_linearization_2012}.

\begin{proposition}[Cohomological equation]
\label{prop: cohomological_equation0}
Let $f $  be a $C^0$ GIET satisfying Keane's condition. %{topologically 
%conjugated to an IET}. 
	Then, for any $\varphi \in C^0\big(\bigsqcup_{\alpha \in \A} I_\alpha(f)\big)$ verifying
\[ \sup_{n \geq 0} \big\| \mathcal S^{f}_n \varphi \big\| < +\infty,\]
where $\mathcal S^{f}_n\varphi $ denotes $n$-th Birkhoff sum of $\varphi$ w.r.t. $f$, there exists $\psi \in C^0([0, 1])$ such that
\[ \varphi = \psi \circ f - \psi.\]
\end{proposition}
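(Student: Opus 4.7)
The strategy is to adapt the classical Gottschalk--Hedlund theorem to the minimal dynamics of a GIET. Keane's condition implies that $f$ is minimal on $[0,1]$ and that the backward orbit of the singular set $D := \{u_0, u_1, \ldots, u_d\}$ is countable. Pick a base point $x_0\in [0,1]\setminus\bigcup_{n\ge 0}f^{-n}(D)$, so that $\{f^n(x_0)\}_{n\ge 0}$ is dense in $[0,1]$ and every iterate of $f$ is continuous in a neighborhood of $f^n(x_0)$. Define
\[ \psi(f^n(x_0)) := \mathcal{S}_n^f\varphi(x_0), \qquad n\ge 0. \]
By construction $\psi\circ f-\psi=\varphi$ along the orbit, and by hypothesis there exists $M>0$ with $|\psi(f^n(x_0))|\le M$ for all $n$.

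The core step is to extend $\psi$ to a continuous function on $[0,1]$. Consider the skew-product $F(x,t):=(f(x),t+\varphi(x))$ acting on $\bigsqcup_{\alpha\in\A}\overline{I_\alpha(f)}\times [-M,M]$ (where $\varphi$ is extended by one-sided limits to each $\overline{I_\alpha(f)}$), and let $\Omega$ be the closure in this space of the orbit $\{(f^n(x_0),\psi(f^n(x_0)))\}_{n\ge 0}$. Then $\Omega$ is compact and $F$-invariant, and contains a minimal nonempty closed invariant subset $\Omega_0$. The projection of $\Omega_0$ onto the first coordinate is closed and $f$-invariant, hence equal to $[0,1]$ by minimality of $f$. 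Suppose for contradiction this projection is not injective on $\Omega_0$: pick $(y,t_1),(y,t_2)\in\Omega_0$ with $s:=t_2-t_1\ne 0$. The vertical translation $T_s(x,t):=(x,t+s)$ commutes with $F$, so $T_s(\Omega_0)\cap\Omega_0$ is nonempty, closed, and $F$-invariant; by minimality of $\Omega_0$, $T_s(\Omega_0)=\Omega_0$ and hence $T_{ns}(\Omega_0)=\Omega_0$ for every $n\in\Z$, contradicting the uniform bound $|t|\le M$. Therefore $\Omega_0$ is the graph of a function $\psi:[0,1]\to [-M,M]$, necessarily extending our initial definition on the dense orbit.

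It remains to verify continuity of $\psi$ on $[0,1]$ and the cohomological identity. Compactness of $\Omega_0$ together with the graph property gives continuity on each $\overline{I_\alpha(f)}$, and the fact that the graph must be closed in $[0,1]\times [-M,M]$ forces the right- and left-limits at each interior $u_i\in D$ to coincide, yielding $\psi\in C^0([0,1])$. The identity $\psi\circ f-\psi=\varphi$, valid on a dense orbit, then extends to $[0,1]$ by continuity of both sides. The main technical obstacle is the discontinuity of $F$ at the fibers over $D$; this is resolved by the choice of $x_0$ outside $\bigcup_n f^{-n}(D)$ together with working in the disjoint-union compactification $\bigsqcup_\alpha\overline{I_\alpha(f)}$, on which $F$ becomes continuous and the classical Gottschalk--Hedlund dichotomy applies verbatim.
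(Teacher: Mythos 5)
There is a genuine gap at the very first step: for a \emph{generalized} IET of class $C^0$, Keane's condition does \emph{not} imply minimality. Denjoy-type examples (and even affine IETs with wandering intervals, as constructed by Levitt, Camelier--Gutierrez, Cobo, Bressaud--Hubert--Maass and Marmi--Moussa--Yoccoz) are infinitely renormalizable, satisfy Keane's condition, and are only semi-conjugated -- not conjugated -- to a minimal IET; they are not minimal. Since the Gottschalk--Hedlund dichotomy needs minimality of the base dynamics, your argument does not apply to the class of maps in the statement. This is not a removable technicality in the present paper: the proposition is applied to a GIET $f$ \emph{before} it is known that $f$ has no wandering intervals (the absence of wandering intervals is deduced in Lemma \ref{lm: conjisLandcont} \emph{using} the solution $\psi$), so assuming minimality here would also make the paper's argument circular. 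The paper instead obtains the proposition as a direct consequence of Corollary 3.6 in \cite{marmi_linearization_2012}, whose proof goes through the renormalization structure (decomposition of Birkhoff sums into special Birkhoff sums along Rauzy--Veech towers) and requires no minimality.

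Even if one grants minimality, the last step is not justified. Your Gottschalk--Hedlund argument is carried out on the doubled space $\bigsqcup_{\alpha\in\A}\overline{I_\alpha(f)}$ (and, implicitly, on the doubling of the forward orbit of the discontinuity set), so the graph property of the minimal set $\Omega_0$ only yields a function that is continuous on each closed interval, i.e.\ with possibly distinct one-sided limits at the countably many doubled points. Projecting $\Omega_0$ to $[0,1]\times[-M,M]$ gives a closed set, but a closed set containing both $(u,t_1)$ and $(u,t_2)$ with $t_1\neq t_2$ is perfectly possible; the vertical-translation/injectivity argument was performed over the doubled space, where the two copies of $u$ are distinct points, so it does not force the one-sided limits to coincide. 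Hence the claim ``closedness of the graph forces $\psi\in C^0([0,1])$'' is unsubstantiated, and continuity of $\psi$ across the orbit of the singular set -- which is precisely the delicate part of the statement -- would need a separate argument.
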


\subsection{Exponential convergence of renormalizations}

The following condition, which concerns the exponential convergence (EC) of renormalizations of a GIET to the space of AIETs, summarizes the main assumption in our results. 

\begin{definition}[EC Condition]
\label{def:EC}
Let $f$ be a GIET semi-conjugated to a minimal IET $T = (\lambda, \pi)$. We say that $f$ satisfies the \emph{EC Condition} if there exists $\tau \in \R^\A$ such that $(\tau, \lambda, \pi)$ is Oseledet's generic w.r.t. to the Zorich cocycle, verifying
\begin{equation}\tag{EC}\label{EC} 
\begin{aligned}
  \max _{\alpha\in\mathcal A}\left\|\Xi(\cZ^n f)\mid_{I_\alpha(\mathcal Z^nf)}- Id\right\|_{C^1[0,1]} = O(c^n), \\
  |L^u(\cZ^nf)|,  |L^s(\mathcal Z^nf)| = O(c^n),
 \end{aligned}
 \end{equation}
for some $0 < c < 1$, where $L(\cZ^nf) =L^s(\cZ^nf) + L^c(\cZ^nf) + L^u(\cZ^nf)$ is decomposed with respect to the Oseledet's splitting of the height cocycle at $\cZ_{\textup{ext}}^n(\tau, \lambda, \pi)$.
\end{definition}

As mentioned in the introduction, it follows from a recent work by S. Ghazouani 
and C. Ulcigrai \cite{ghazouani_priori_2023} that this property holds for 
{typical $C^3$ GIETs of rotation type and typical minimal $C^3$ GIETs of 4 or 5 
intervals.} The authors in \cite{ghazouani_priori_2023} conjectured that the a 
priori bounds should hold for typical minimal $C^3$ GIET with $\mathcal N(f)=0$ 
without any assumption on the underlying combinatorics.  Moreover, they proved 
that this conjecture is true if a generalization (to AIETs of any combinatorial 
type) of a result by S. Marmi-P. Moussa and J.C. Yoccoz 
\cite{marmi_linearization_2012} concerning Birkhoff sums for AIETs holds. 

Subsequent renormalizations of a map satisfying the EC condition verify the following.
\begin{corollary}
	\label{cor: exp_convergence}
	For a.e. IET $T = (\lambda, \pi) \in \Delta_{\A} \times \mathfrak{G}^0_d$, for any GIET $f$ with $\gamma(f) = \gamma(T)$  and $\mathcal N(f)=0$ verifying the EC Condition, it holds that
	\[\max_{\alpha\in\mathcal A}\max_{x, y \in I^n_\alpha(f)} \frac{Df^{q^n_\alpha}(x)}{Df^{q^n_\alpha}(y)} = 1 + O(c^n),\]
	for some $0 < c < 1$.
\end{corollary}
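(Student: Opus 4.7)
The plan is to use the EC Condition directly: it controls the $C^1$-distance of the zoomed branches of iterates of $f$ under the Zorich algorithm to the identity, and the desired distortion bound follows by translating this control back to the unzoomed branches $f^{q^n_\alpha}\mid_{I^n_\alpha(f)}$.

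First, I would reconcile the fact that the conclusion involves the heights $q^n_\alpha$ and induction intervals $I^n_\alpha$ of the accelerated algorithm $\mathcal{R}$ of Section \ref{sc: defR}, while the EC Condition in Definition \ref{def:EC} is phrased in terms of $\mathcal Z^n$. Since $\mathcal{R}^n = \mathcal{Z}_{\textup{ext}}^{m_n}$ with $m_n = \sum_{k=0}^{n-1} N(\mathcal{R}^k(\tau,\lambda,\pi)) \geq n$ (because $N \geq 1$), and $0 < c < 1$, one has $c^{m_n} \leq c^n$. In particular, the bound $\|\Xi(\mathcal{Z}^{m_n} f)\mid_{I_\alpha(\mathcal Z^{m_n}f)} - \id\|_{C^1[0,1]} = O(c^{m_n})$ provided by \eqref{EC} transfers to the accelerated algorithm, giving $\max_\alpha \|\Xi(\mathcal{R}^n f)\mid_{I_\alpha(\mathcal R^n f)} - \id\|_{C^1} = O(c^n)$.

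Next, I would use the explicit form of the zoom operator $\Xi$ as a pre- and post-composition by orientation-preserving affine bijections. The $\alpha$-branch of $\mathcal R^n f$ is precisely $f^{q^n_\alpha}\mid_{I^n_\alpha(f)}$, and writing $\Xi(\mathcal R^n f\mid_{I_\alpha}) = A_1 \circ f^{q^n_\alpha} \circ A_2$ with $A_2 : [0,1] \to I^n_\alpha(f)$ and $A_1 : f^{q^n_\alpha}(I^n_\alpha(f)) \to [0,1]$, the chain rule yields
\[
D\Xi(\mathcal{R}^n f\mid_{I_\alpha})(t) = \frac{|I^n_\alpha(f)|}{|f^{q^n_\alpha}(I^n_\alpha(f))|} \cdot Df^{q^n_\alpha}(A_2(t)),
\]
so that the derivative $Df^{q^n_\alpha}$ on $I^n_\alpha(f)$ agrees with $D\Xi(\mathcal R^n f\mid_{I_\alpha})$ up to a multiplicative constant depending only on $n$ and $\alpha$.

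Finally, taking the ratio $Df^{q^n_\alpha}(x)/Df^{q^n_\alpha}(y)$ cancels this length prefactor, leaving a ratio of $D\Xi(\mathcal R^n f\mid_{I_\alpha})$ evaluated at two points of $[0,1]$. By the first step, $\|D\Xi(\mathcal R^n f\mid_{I_\alpha}) - 1\|_{C^0([0,1])} = O(c^n)$ uniformly in $\alpha$, hence
\[
\frac{Df^{q^n_\alpha}(x)}{Df^{q^n_\alpha}(y)} = \frac{1 + O(c^n)}{1 + O(c^n)} = 1 + O(c^n),
\]
and taking the maximum over $\alpha \in \mathcal A$ yields the claim. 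There is essentially no obstacle here: the corollary is a reformulation of the EC Condition as a multiplicative distortion estimate, and the only thing to be careful about is the bookkeeping between $\mathcal Z$ and its acceleration $\mathcal R$, which is harmless thanks to $N \geq 1$ and $c < 1$.
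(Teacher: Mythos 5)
Your proof is correct and matches what the paper has in mind: the paper states Corollary \ref{cor: exp_convergence} without proof, treating it as an immediate consequence of the EC Condition, and your argument supplies precisely the expected unwinding — the $C^1$ bound on $\Xi(\mathcal Z^n f)\mid_{I_\alpha}$ minus the identity controls the derivative of the zoomed branch, the affine prefactor $|I^n_\alpha(f)|/|f^{q^n_\alpha}(I^n_\alpha(f))|$ cancels in the ratio $Df^{q^n_\alpha}(x)/Df^{q^n_\alpha}(y)$, and the bookkeeping between $\mathcal Z$ and the acceleration $\mathcal R$ (via $m_n\geq n$ and $c<1$, so $c^{m_n}\leq c^n$) is handled properly. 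Nothing further is needed.
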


\section{Statements of the results}\label{sc: results}

The following theorem is the main result of this article.
\begin{theorem}[Rigidity]
\label{thm: rigidity} 
For a.e. $T = (\lambda, \pi) \in \Delta_{\A} \times \mathfrak{G}^0_d$, any two boundary-equivalent GIETs $f$ and $g$ of class $C^3$, verifying 
\[\mathcal{N}(f) = 0 = \mathcal{N}(g), \qquad \gamma(f) = \gamma(T) = \gamma(g),\]
and satisfying the EC Condition, are $C^1$ conjugated.
\end{theorem}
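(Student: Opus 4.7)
The plan is to reduce the rigidity problem to the case of affine interval exchange transformations via Theorem~\ref{thm: linearization}, and then to exploit the boundary-equivalence assumption together with the freedom in selecting affine models to match them.

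First, I would apply Theorem~\ref{thm: linearization} to each of $f$ and $g$ separately. For a.e.\ $T$, this furnishes AIETs $F$ and $G$ (each with the same combinatorial rotation number as $T$) together with $C^1$ conjugacies $h_f$ and $h_g$ such that $F = h_f\circ f\circ h_f^{-1}$ and $G = h_g\circ g\circ h_g^{-1}$. As emphasised in the introduction, each of $F$ and $G$ is not unique: it is chosen from an affine family whose log-slope vectors span an affine subspace of $\R^{\A}$ of dimension equal to $\dim E^s(\tau,\lambda,\pi)$, the stable direction of the Zorich height cocycle.

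Second, I would use boundary equivalence to align the affine models. By Proposition~\ref{prop: boundary_properties}(3), $\mathcal{B}(F)=\mathcal{B}(f)=\mathcal{B}(g)=\mathcal{B}(G)$; and by Proposition~\ref{prop: boundary_properties}(2), $F, G$ have zero mean non-linearity as well. Hence Proposition~\ref{prop: boundary_AIETs} forces
\[\pi_{\Kpi}(\omega_F) = \pi_{\Kpi}(\omega_G), \qquad \text{i.e.,} \qquad \omega_F - \omega_G \in \Kpi.\]
Exploiting the freedom in choosing affine models along $E^s$, I would then adjust $\omega_F$ and $\omega_G$ so that they coincide; two AIETs over the same $\pi$ with the same combinatorial rotation number and the same log-slope vector are then affinely conjugate, so $F$ and $G$ agree up to an affine change of coordinates. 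The composition $h_g^{-1}\circ h_f$, composed with this affine map, yields the desired $C^1$ conjugacy from $f$ to $g$.

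The main obstacle I expect is the matching step. Two subtleties must be overcome. First, one must establish that the kernel $\Kpi$ is absorbed by the moduli direction $E^s$: in particular, that whatever component of $\omega_F - \omega_G$ lies in $\Kpi$ can be eliminated by a permissible translation of the log-slope within the $C^1$-conjugacy class of $f$ (or $g$). This relies on solving the cohomological equation (Proposition~\ref{prop: cohomological_equation0}) for logarithmic derivatives, with the uniform Birkhoff-sum estimates supplied by the EC Condition and Corollary~\ref{cor: exp_convergence}. Second, one must show that matching log-slopes together with the combinatorial rotation number truly pins down the AIET up to affine conjugacy; this should follow from an analysis of the normalized AIETs over a fixed Rauzy class, tracking how the length vector is determined by the log-slope via the induction. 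Together these ingredients form the technical core of Section~\ref{sc: conjugacy}, and once they are in place the theorem follows by the composition described above.
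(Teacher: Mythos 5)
Your overall strategy is the paper's: linearize via Theorem \ref{thm: linearization}, then use Proposition \ref{prop: boundary_properties} and Proposition \ref{prop: boundary_AIETs} to see that boundary-equivalence forces the admissible log-slopes of the affine models of $f$ and $g$ to have the same projection to $\Kpi$ (this is exactly the content of Lemma \ref{lem: same_model}). However, your final matching step has a genuine gap, together with a slip about the relevant subspace. First, equal projections give $\omega_F-\omega_G\in\orth{\Kpi}$, not $\omega_F-\omega_G\in\Kpi$; consequently there is no ``component in $\Kpi$'' to eliminate -- the boundary data has already pinned that component down, and since $E^s\subset\orth{\Kpi}$ while $E^c\cap\orth{\Kpi}=\{0\}$, the difference lies automatically in $E^s$, i.e.\ within the permissible translations. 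Second, and more seriously, after adjusting so that $\omega_F=\omega_G$ you invoke the claim that two AIETs over the same $\pi$ with the same combinatorial rotation number and the same log-slope vector are affinely conjugate. Since an orientation-preserving affine bijection of the unit interval is the identity, this amounts to asserting that $\textup{Aff}(T,\omega)$ is a single point, a nontrivial uniqueness statement that is neither proved in the paper nor supplied by any of the results you cite (the set of AIETs with prescribed slope vector is a $(d-2)$-parameter family, and nothing you propose shows the rotation-number condition cuts it down to one map); your sketch of ``tracking how the length vector is determined by the log-slope via the induction'' is precisely the missing argument.

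The gap is avoidable, and the paper's proof shows how: do not linearize $f$ and $g$ separately and then try to compare two different AIETs. Theorem \ref{thm: affine_shadow} is stated for \emph{any} $S\in\textup{Aff}(T,\omega)$ with $\omega\in E^{cs}(\lambda,\pi)$ and $\pi_{\Kpi}(\omega)=\omega_f$, so once Lemma \ref{lem: same_model} gives $\omega_f=\omega_g$, the sets of admissible affine models for $f$ and for $g$ coincide; choosing one common $S$ and applying Theorem \ref{thm: linearization} to both maps yields $C^1$ conjugacies $h_f,h_g$ to the \emph{same} AIET, and $h_g^{-1}\circ h_f$ is the desired conjugacy. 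No uniqueness of the affine model, and no affine conjugacy between distinct AIETs, is needed. (Your remarks about solving the cohomological equation via Proposition \ref{prop: cohomological_equation0} duplicate work already contained in the proof of Theorem \ref{thm: linearization}, which you are entitled to use as a black box here.)
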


We will prove the theorem above by showing that any two maps, $f$ and $g$, as in the statement above, are $C^1$ conjugated to the same AIET.  

For any IET $T$ and any $\omega\in \R^d$, we denote by $\textup{Aff}(T,\omega)$ the set of AIETs whose log-slope vector is $\omega$ and whose combinatorial rotation number is $\gamma(T)$.

To construct an affine model of a given mean non-linearity zero GIET $f$, we look for an AIET $S$ that shadows the orbit of the $f$ with respect to an appropriate acceleration of the Rauzy-Veech induction. Let us point out that such an affine model is not necessarily unique. %More precisely, we will show the following.

\begin{theorem}[Affine shadow]
\label{thm: affine_shadow}
 For a.e. $T = (\lambda, \pi) \in \Delta_{\A} \times \mathfrak{G}^0_d$ and for any GIET $f$ of class $C^3$ with $\mathcal{N}(f)=0$ and $\gamma(f) = \gamma(T)$ verifying the EC Condition, there exists $\omega_f \in \Kpi$ such that, for any $S \in \textup{Aff}(T, \omega)$ with $\omega \in E^{cs}(\lambda, \pi)$ satisfying $\pi_{\Kpi}(\omega) = \omega_f,$  we have $\mathcal{B}(S) = \mathcal{B}(f)$ and
\[ \textup{d}_{C^1}(\tilde{\mathcal{R}}^nf, \tilde{\mathcal{R}}^nS) = O(c^n),\] 
for some $0 < c < 1$.

%Moreover, if $f$ is a piecewise smooth transformation of the circle there exists a unique $\omega^* \in E^c(\lambda, \pi)$, verifying $\pi_{\Kpi}(\omega^*) = \omega_f$, such that any AIET $S^*$ with log-slope $\omega^*$ and $\gamma(S^*) = \gamma(T)$ is break-equivalent to $f$. 
\end{theorem}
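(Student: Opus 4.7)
The overall plan is to construct the affine shadow $S$ by first recovering its log-slope projection $\omega_f \in \Kpi$ as a renormalization limit of data attached to $f$, and then verifying the two conclusions (equality of boundaries and $C^1$-exponential closeness of the renormalized maps) by comparing the action of the height cocycle on log-slope vectors.

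\medskip

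\textbf{Construction of $\omega_f$.} For each $n\ge 0$ I decompose the average log-slope of the $n$-th renormalization with respect to the Oseledets splitting of the height cocycle at $\tilde{\mathcal R}^n(\tau,\lambda,\pi)$,
\[
L(\tilde{\mathcal R}^n f) = L^s_n + L^c_n + L^u_n,
\]
and set $\omega^{(n)} := ({}^T Q^{(n)}(\tau,\lambda,\pi))^{-1}L^c_n \in E^c(\tau,\lambda,\pi) = \Kpi$. The EC Condition gives $|L^s_n|,|L^u_n| = O(c^n)$, and Proposition \ref{prop: generic_condition}(\ref{bounded_central}) tells us that $({}^T Q^{(n)})^{-1}$ is uniformly bounded on $E^c$, so $(\omega^{(n)})$ stays bounded in $\Kpi$. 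The key to proving it is Cauchy is that AIETs transform \emph{exactly} by ${}^T Q$ under renormalization, whereas for our GIET $f$ the bounded-distortion estimate of Corollary \ref{cor: exp_convergence} supplies the approximate cocycle identity
\[
L(\tilde{\mathcal R}^{n+1} f) = {}^T Q(\tilde{\mathcal R}^n(\tau,\lambda,\pi))\, L(\tilde{\mathcal R}^n f) + O(c^n).
\]
Projecting to $E^c$ at level $n+1$, inverting by $({}^T Q^{(n+1)})^{-1}$ and using the uniform bound on $E^c$ yields $|\omega^{(n+1)} - \omega^{(n)}| = O(c^n)$, hence $\omega_f := \lim_n \omega^{(n)} \in \Kpi$ exists.

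\medskip

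\textbf{Matching the boundaries.} By Proposition \ref{prop: boundary_AIETs}, the boundary of any $S \in \textup{Aff}(T,\omega)$ with $\pi_{\Kpi}(\omega) = \omega_f$ depends only on $\omega_f$ through the pairings $\langle \omega_f, \lambda(\mathcal O_s)\rangle$ with the canonical basis of $\Kpi$, while $\mathcal B(f)$ is preserved by renormalization (Proposition \ref{prop: boundary_properties}(1)). At level $n$, the EC Condition forces the left/right jumps of $\log D(\tilde{\mathcal R}^n f)$ at the partition endpoints to be captured, up to $O(c^n)$, by pairing $L^c_n$ against the basis vectors $\lambda(\mathcal O^n_s)$ of $\Kpim{n}$; pulling back by $({}^T Q^{(n)})^{-1}$ and passing to the limit produces $\mathfrak B_\pi(\omega_f) = \mathcal B(S)$.

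\medskip

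\textbf{$C^1$-exponential shadowing.} Writing the chosen $\omega = \omega_f + \omega^s$ with $\omega^s \in E^s$, AIET log-slopes transform exactly by the cocycle, so using Step 1 and the exponential contraction on $E^s$,
\[
L(\tilde{\mathcal R}^n S) = {}^T Q^{(n)}\omega_f + {}^T Q^{(n)}\omega^s = L^c_n + O(c^n).
\]
Combined with the EC decomposition this gives $L(\tilde{\mathcal R}^n f) - L(\tilde{\mathcal R}^n S) = O(c^n)$. Because the zoom operator $\Xi$ sends every affine branch to the identity, the EC Condition immediately yields $\|\Xi(\tilde{\mathcal R}^n f|_{I^n_\alpha(f)}) - \Xi(\tilde{\mathcal R}^n S|_{I^n_\alpha(S)})\|_{C^1} = O(c^n)$, and the length relation \eqref{lengthrelation} (together with its AIET counterpart, fed by the log-slope closeness and the common combinatorial rotation number $\gamma(T)$) controls the interval-length contributions to $d_{C^1}$ by $O(c^n)$. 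Summing the three contributions yields $d_{C^1}(\tilde{\mathcal R}^n f, \tilde{\mathcal R}^n S) = O(c^n)$.

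\medskip

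The most delicate point I expect is the boundary identification: $\mathcal B(f)$ is defined through point-wise left/right jumps of $\log Df$, whereas $\omega_f$ is built from \emph{averaged} log-slopes, so Step 2 hinges on carefully reconciling these two objects using the EC estimates and Proposition \ref{prop: boundary_properties}. A second, subtler obstacle is the length-comparison in Step 3, which ultimately amounts to an exponential comparison of the invariant measures of $f$ and $S$ along the accelerated Rauzy-Veech iteration.
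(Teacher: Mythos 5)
Your Step 3 contains the genuine gap. The metric $d_{C^1}$ is not only the $C^1$ distance of the zoomed branches: it also contains the terms $\big||I_\alpha(\tilde{\mathcal R}^nf)|-|I_\alpha(\tilde{\mathcal R}^nS)|\big|$ and $\big||\tilde{\mathcal R}^nf(I_\alpha)|-|\tilde{\mathcal R}^nS(I_\alpha)|\big|$, and you dispose of these by asserting that the length relation \eqref{lengthrelation} "controls" them. It does not, at least not directly: the lengths of the induction intervals of $f$ at level $n$ are determined by the entire future of the renormalization of $f$, not by the level-$n$ log-slope data, so exponential closeness of $L(\tilde{\mathcal R}^nf)$ and $L(\tilde{\mathcal R}^nS)$ gives no immediate control on $\tilde I^n(f)-\tilde I^n(S)$. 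This is exactly the content of Propositions \ref{prop: bound_intervals} and \ref{prop: bound_image_intervals}, which form the technical core of the paper's proof: one shows that the normalized length vectors of $f$ are an exponentially accurate pseudo-orbit for the slope-twisted transition matrices $V_n$ (Lemma \ref{lem: lengths_pseudo_orbit} and Corollary \ref{lem:expdecay}, which already require Corollary \ref{cor: exp_convergence}, Proposition \ref{prop: bound_slopes}, Condition \ref{Roth_type}, and a separate claim comparing $|I^{n+1}_\alpha(f)|/|I^n(f)|$ with $|I^{n+1}_\alpha(S)|/|I^n(S)|$), that the $V_n$ are uniform contractions of the Hilbert projective metric (Lemma \ref{lem: contraction}, using Conditions \ref{balanced_base} and \ref{bounded_central}), and then one runs a Cunha--Smania-type telescoping argument comparing $\tilde I^n(f)$ with $\tilde V_n^{2n-1}\tilde I^{2n}(f)$ and hence with $\tilde I^n(S)$. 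Your closing remark correctly identifies this as the comparison of invariant measures, but the proposal contains neither the pseudo-orbit estimate nor the projective-contraction mechanism that makes the comparison exponential, so the claimed bound on the length terms is unproved.

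Two further points. First, $E^c(\tau,\lambda,\pi)$ is \emph{not} equal to $\Kpi$: for the height cocycle the equivariant subspace is $\orth{\Kpi}$, which carries the nonzero exponents, and the paper only uses that $\pi_{\Kpi}\mid_{E^c}$ is an isomorphism onto $\Kpi$ (via the duality fact $\orth{\Kpi}\cap E^c=\{0\}$). Consequently $\omega_f$ must be defined as $\pi_{\Kpi}(\omega^*)$, where $\omega^*\in E^c$ is your limit, and in Step 3 the correct decomposition is $\omega=\omega^*+(\omega-\omega^*)$ with $\omega-\omega^*\in E^s$; the expression ${}^TQ^{(n)}\omega_f$ for the log-slope of $\tilde{\mathcal R}^nS$ does not make sense as written, since $\omega_f\in\Kpi$ is not the log-slope of $S$. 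This is repairable but needs the isomorphism argument spelled out. Second, your boundary step is both vaguer and harder than necessary: once the $C^1$ estimate is established, the paper simply writes $\mathcal B(f)=\mathcal B(\mathcal R^nf)$ (Proposition \ref{prop: boundary_properties}), $\mathcal B(\mathcal R^nf)=\mathcal B(\mathcal R^nS)+o(c^n)$ by the convergence of renormalizations, and $\mathcal B(\mathcal R^nS)=\mathcal B(S)$, then lets $n\to\infty$; there is no need to pull the kernel basis vectors $\lambda(\mathcal O^n_s)$ back through the cocycle, and the averaged-versus-pointwise reconciliation you flag as delicate is avoided entirely by this route. Note, however, that this cleaner boundary argument also relies on the length estimates of Step 3, so the gap identified above is what blocks the whole proof.
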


This will imply the following linearization result for GIETs. 

\begin{theorem}[Linearization]
\label{thm: linearization}
Let $T = (\lambda, \pi)  \in \Delta_{\A} \times \mathfrak{G}^0_d$ verifying Theorem \ref{thm: affine_shadow} and let $f$ and $S$ as in Theorem \ref{thm: affine_shadow}. Then $f$ and $S$ are $C^1$ conjugated (as GIETs). 

%Moreover, if $f$ is a piecewise smooth transformation of the circle and $S^*$ is as in Theorem \ref{thm: affine_shadow}, then $f$ and $S^*$ are $C^1$ conjugated (as circle transformations). 

%For a.e. $T = (\lambda, \pi) \in \Delta_{\A} \times \mathfrak{G}^0_d$, any  GIET $f$ of class $C^3$ with mean-non-linearity zero and  $\gamma(f) = \gamma(T)$ is $C^1$-conjugated to an AIET $S = S(f)$ with  $\gamma(S) = \gamma(T)$.% and log-slope $\omega \in E^c(\lambda, \pi)$. %, there exists $\omega \in  E^c(\lambda, \pi)$ such that for any $\omega^s \in E^s(\lambda, \pi)$ and for any AIET $S$ with $\gamma(S) = \gamma(T)$ and log-slope $\omega + \omega^s$,  
\end{theorem}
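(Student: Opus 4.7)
The strategy is to identify $Dh$ (where $h$ is the topological conjugacy between $f$ and $S$) with the exponential of a continuous solution of a cohomological equation provided by Proposition \ref{prop: cohomological_equation0}.

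\textbf{Step 1: existence of the topological conjugacy.} For almost every $T$ the map $T$ is uniquely ergodic, hence minimal, so by \cite[Proposition 7]{yoccoz_echanges_2005} the semi-conjugacies from $f$ and $S$ onto $T$ are both homeomorphisms. Composing one with the inverse of the other produces a unique orientation-preserving homeomorphism $h:[0,1]\to[0,1]$ intertwining $f$ and $S$ and sending $I_\alpha(f)$ onto $I_\alpha(S)$ for every $\alpha\in\A$. Denote by $\omega = L(S) \in \R^\A$ the log-slope vector of $S$.

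\textbf{Step 2: the cohomological equation.} Define
\[ \varphi(x) \;=\; \log DS(h(x)) - \log Df(x), \qquad x \in \bigsqcup_{\alpha \in \A} \mathrm{int}\,I_\alpha(f).\]
Since $S$ is affine, $\log DS\circ h\equiv \omega_\alpha$ on $I_\alpha(f)$, so $\varphi\in C^2\big(\bigsqcup_\alpha I_\alpha(f)\big)$; and Birkhoff sums telescope to
\[ \mathcal{S}^f_n \varphi(x) \;=\; \log DS^n(h(x)) - \log Df^n(x).\]
Thus, granted a uniform $L^\infty$ bound on $\mathcal{S}^f_n\varphi$, Proposition \ref{prop: cohomological_equation0} supplies $\psi \in C^0([0,1])$ with $\varphi = \psi\circ f - \psi$.

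\textbf{Step 3: the uniform bound on Birkhoff sums.} This is the main technical step and the principal obstacle. I would first prove the bound along return times $n = q^k_\alpha$ by combining three inputs: Corollary \ref{cor: exp_convergence} gives distortion $1 + O(c^k)$ for $Df^{q^k_\alpha}$ on $I^k_\alpha(f)$; the AIET structure of $S$ makes $DS^{q^k_\alpha}$ exactly constant on $I^k_\alpha(S)$; and the bound $\textup{d}_{C^1}(\tilde{\mathcal R}^k f, \tilde{\mathcal R}^k S)=O(c^k)$ from Theorem \ref{thm: affine_shadow}, when unraveled through the zoom operator $\Xi$, controls the ratio of the average slopes $|f^{q^k_\alpha}(I^k_\alpha(f))|/|I^k_\alpha(f)|$ and $|S^{q^k_\alpha}(I^k_\alpha(S))|/|I^k_\alpha(S)|$ up to an exponentially small error. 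The balanced-base condition \ref{balanced_base} of Proposition \ref{prop: generic_condition} prevents the length ratios between $f$ and $S$ at level $k$ from degenerating. I would then extend the estimate to arbitrary $n$ through an Ostrowski-type decomposition of the orbit segment $\{x,f(x),\ldots,f^{n-1}(x)\}$ into blocks of lengths $q^k_{\alpha}$ associated with the accelerated induction $\mathcal R$. Summing the contributions across the hierarchy yields a geometric series in $c$, hence a uniform bound $\sup_n \|\mathcal{S}^f_n\varphi\|_\infty<\infty$.

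\textbf{Step 4: conclusion.} Letting $\psi$ be as above, define
\[ H(x) \;=\; \frac{\int_0^x e^{\psi(t)}\,dt}{\int_0^1 e^{\psi(t)}\,dt},\]
a $C^1$ orientation-preserving diffeomorphism of $[0,1]$. The identity $\psi\circ f - \psi = \varphi$ is equivalent to $DH(f(x))\,Df(x) = DS(h(x))\,DH(x)$ on each $I_\alpha(f)$. Integrating over $I_\alpha(f) = [a,b]$ and using $h(I_\alpha(f)) = I_\alpha(S)$ gives $H(f(b^-)) - H(f(a^+)) = e^{\omega_\alpha}\bigl(H(b) - H(a)\bigr)$. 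A finite bookkeeping, using the irreducible combinatorial datum $\pi$, the normalization $H(0)=0,\,H(1)=1$, and the identities $\sum_\alpha e^{\omega_\alpha}|I_\alpha(S)|=1=\sum_\alpha |I_\alpha(S)|$, forces $H(u_i(f)) = u_i(S)$ at every partition endpoint; hence $H$ is a conjugacy between $f$ and $S$. By the uniqueness of the topological conjugacy, $H=h$, so $h$ is $C^1$ with strictly positive derivative $e^{\psi}/\int_0^1 e^{\psi}$. Pinning down these endpoint matchings in Step 4 constitutes a secondary, but more routine, difficulty compared to the Birkhoff-sum bound of Step 3.
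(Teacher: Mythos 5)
There is a genuine gap at Step 1, and it is the crux of the theorem. Minimality (or unique ergodicity) of the IET $T$ does \emph{not} imply that the semi-conjugacy from a GIET onto $T$ is a homeomorphism: by \cite[Proposition 7]{yoccoz_echanges_2005} the semi-conjugacy is a conjugacy if and only if the \emph{GIET itself} is minimal, i.e.\ has no wandering intervals, and wandering intervals do occur for AIETs and GIETs semi-conjugated to minimal, uniquely ergodic IETs (this is exactly the phenomenon studied in \cite{marmi_affine_2010}). So you cannot obtain the conjugacy $h$ "for free". The paper has to work for it: for $S$ it invokes Theorem \ref{thm: affconj} (Trujillo--Ulcigrai), which is precisely where the hypothesis $\omega \in E^{cs}(\lambda,\pi)$ is used; for $f$ it proves Lemma \ref{lm: conjisLandcont}, where the absence of wandering intervals is deduced \emph{after} solving the cohomological equation, by showing that the semi-conjugacy $h$ is uniformly Lipschitz on the dynamical partitions $\mathcal P_n(f)$ and that the elements of $\mathcal P_n(S)$ shrink to zero (because $S$ is conjugate to $T$). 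Note that your Steps 2--3 only use the semi-conjugacy, so they survive; in fact they coincide with the paper's Proposition \ref{prop: cohomological_equation} (special Birkhoff sums of $\varphi=\log DS\circ h-\log Df$ are $O(c^n)$ by Corollary \ref{cor: exp_convergence} and Proposition \ref{prop: bound_slopes}, and the tower decomposition plus the Roth-type Condition \ref{Roth_type} gives the uniform bound). But the logical order must be: semi-conjugacy, then $\psi$, then no wandering intervals, then conjugacy -- not conjugacy first.

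Step 4 also has a gap, though a more repairable one. From $\psi\circ f-\psi=\varphi$ you only get that $H f H^{-1}$ is \emph{some} AIET with log-slope $\omega$, permutation $\pi$ and rotation number $\gamma(T)$; the claim that the normalization $H(0)=0$, $H(1)=1$ and the identities $\sum_\alpha e^{\omega_\alpha}|I_\alpha(S)|=1=\sum_\alpha|I_\alpha(S)|$ "force" $H(u_i(f))=u_i(S)$ by finite bookkeeping is not justified -- any other element of $\textup{Aff}(T,\omega)$ would satisfy the same finite constraints, so what you actually need is uniqueness of the normalized AIET in $\textup{Aff}(T,\omega)$, which is an infinite (contraction-type) argument in the spirit of Lemma \ref{lem: contraction}/Proposition \ref{prop: bound_intervals}, not a finite one; and the identification $H=h$ additionally needs $h$ to already be a conjugacy (back to the first gap) plus triviality of the centralizer. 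The paper avoids all of this by arguing directly on $h$ (Proposition \ref{prop: regular_conjugacy}): once $h$ is Lipschitz it is differentiable a.e., the mean value theorem along the towers together with $\psi$ shows $Dh=Ce^{\psi}$ a.e., and a Lipschitz map whose a.e.\ derivative equals a continuous function is $C^1$. I would recommend replacing your Steps 1 and 4 by these two ingredients.
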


 %Finally, we will deduce Theorem \ref{thm: rigidity} by proving that the affine models of $f$ and $g$ given by  Theorem \ref{thm: linearization} coincide. 

Applying the previous results in the piecewise smooth circle homeomorphism setting, we will prove the following.

 \begin{theorem}
 \label{thm: circle_thm}
 Let $d \geq 3$.  For a.e. $T = (\lambda, \pi) \in \Delta_{\A} \times \mathfrak{G}^0_d$, with $\pi$ of rotation type, any two break-equivalent piecewise smooth circle homeomorphisms $f, g \in P^3_{d - 1}(\T)$ verifying
 \[\mathcal{N}(f) = 0 = \mathcal{N}(g), \qquad \gamma(f) = \gamma(T) = \gamma(g),\]
are $C^1$ conjugated as circle maps.
 
 %Moreover, for $f$ as above, there exists a unique vector $\omega_f^* \in E^c(\lambda, \pi)$ such that $f$ is $C^1$-conjugated to any $S \in \textup{Aff}(T, \omega_f^*)$. %piecewise linear circle homeomorphism $S$ with $\gamma(S) = \gamma(T)$ and log-slope vector $\omega.$
 \end{theorem}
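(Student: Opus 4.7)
The plan is to reduce Theorem \ref{thm: circle_thm} to Theorem \ref{thm: rigidity} via the identification described in Section \ref{sc: p-homeomorphisms} between $P^3_{d-1}(\T)$ and $C^3$ GIETs on $d$ intervals of rotation type. Concretely, I would set $F = \varphi^{-1} \circ f \circ \varphi$ and $G = \varphi^{-1} \circ g \circ \varphi$; by construction these are $C^3$ GIETs of rotation type satisfying $\mathcal{N}(F) = \mathcal{N}(f) = 0$, $\mathcal{N}(G) = 0$, and $\gamma(F) = \gamma(T) = \gamma(G)$. The EC Condition holds for a.e.\ such $T$ by the results of \cite{ghazouani_priori_2023} together with the a priori bounds available for piecewise $C^2$ circle diffeomorphisms with breaks, as discussed in the introduction.

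The key combinatorial step is to establish the boundary-equivalence $\mathcal{B}(F) = \mathcal{B}(G)$ required by Theorem \ref{thm: rigidity}. By formula \eqref{eq: boundary_p_homeos}, the components of $\mathcal{B}(F)$ are the logarithms of the jump ratios of $f$ at its $d-1$ break points, arranged in the canonical order determined by the rotation-type combinatorics and the root $\varphi(0)$. The hypothesis $\gamma(F) = \gamma(G)$ fixes this canonical pairing of breaks of $f$ and $g$ in a way compatible with the common root $\varphi(0)$, so the equality of jump ratios at matched breaks supplied by break-equivalence translates into an equality of the corresponding boundary components.

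Theorem \ref{thm: rigidity} then provides a $C^1$ conjugacy $H\colon[0,1] \to [0,1]$ with $H \circ F = G \circ H$. I set $h = \varphi \circ H \circ \varphi^{-1}$; this is a homeomorphism of $\T$ conjugating $f$ and $g$, and it is automatically $C^1$ on $\T \setminus \{\varphi(0)\}$ since $H$ is $C^1$ on $(0,1)$ (the matching of one-sided derivatives across interior partition points being part of the $C^1$ GIET-conjugacy structure, guaranteed by $\mathcal{B}(F) = \mathcal{B}(G)$). It remains to verify $C^1$-smoothness at $\varphi(0)$, which amounts to showing $H'(0^+) = H'(1^-)$. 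Since $f$ is continuous on $\T$ one has $F(0) = F(1^-) \in (0,1)$, and $H$ is $C^1$ at this point, so differentiating $H \circ F = G \circ H$ at $x = 0^+$ and $x = 1^-$ yields
\[ \frac{H'(0^+)}{H'(1^-)} = \frac{D_+F(0)\, D_-G(1)}{D_-F(1)\, D_+G(0)} = \frac{\sigma_g(\varphi(0))}{\sigma_f(\varphi(0))} = 1, \]
where the final equality follows from $\mathcal{B}(F)_1 = \mathcal{B}(G)_1$. Thus $h$ is $C^1$ at $\varphi(0)$ and is the desired circle conjugacy.

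The main obstacle in this plan is the combinatorial step of deducing boundary-equivalence of the GIETs $F, G$ from break-equivalence of $f, g$ together with $\gamma(F) = \gamma(G)$: one must verify carefully that the natural pairing of breaks induced by equal rotation numbers rooted at $\varphi(0)$ is forced to coincide with the pairing supplied by some break-equivalence conjugacy. Once this is in place, the invocation of Theorem \ref{thm: rigidity} and the final chain-rule computation at the gluing point $\varphi(0)$ are routine.
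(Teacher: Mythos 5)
Your approach is correct but genuinely different from the paper's. The paper does \emph{not} invoke Theorem~\ref{thm: rigidity} and then patch up the gluing point; instead, starting from Theorems~\ref{thm: affine_shadow}, \ref{thm: linearization} and Lemma~\ref{lem: same_model}, it produces a \emph{common} affine model $L$ of $F$ and $G$, observes that the induced circle map $l$ may have an extra break at $\varphi(u_{d-k-1}(L))$ (so that $l$ need not be break-equivalent to $f,g$), and then proves that there is a \emph{unique} correction $v \in E^s(\lambda,\pi)$ such that the circle map $l^*$ induced by $L^*\in\textup{Aff}(T,\omega+v)$ has no break at $\varphi(u_{d-k-1}(L^*))$. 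The vector $v$ lies in $E^s(\lambda,\pi)\subset \Kpi^\bot$, so $L^*$ still verifies the hypotheses of Theorem~\ref{thm: affine_shadow}, and $l^*$ is break-equivalent to both $f$ and $g$; the conclusion then follows from the ``Moreover'' clause of Proposition~\ref{prop: cohomological_equation} (circle-continuity of $\psi$) together with Proposition~\ref{prop: regular_conjugacy}. Your route is shorter: you get the GIET conjugacy $H$ directly from Theorem~\ref{thm: rigidity} and verify $C^1$-smoothness of $h=\varphi\circ H\circ\varphi^{-1}$ at $\varphi(0)$ by a one-line chain-rule computation, which, after using $F(0)=F(1^-)$, reduces the gluing condition $H'(0^+)=H'(1^-)$ to the identity $\sigma_f(\varphi(0))=\sigma_g(\varphi(0))$, i.e. to $\mathcal B(F)_1=\mathcal B(G)_1$. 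That computation is correct ($\mathcal B(F)_1=\log\sigma_f(\varphi(0))$ by \eqref{eq: boundary_p_homeos}, and $DH$ is continuous on $[0,1]$ since Proposition~\ref{prop: regular_conjugacy} gives $DH=Ce^\psi$ with $\psi$ continuous on $[0,1]$). Your approach thus sidesteps entirely the construction of $L^*$ and the analysis of the possible extra break of $l$, which is a genuine simplification; what it gives up is the explicit break-compatible affine model that the paper exhibits.

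As you yourself note, the main gap in your write-up is the deduction of $\mathcal B(F)=\mathcal B(G)$ from break-equivalence of $f$ and $g$ plus $\gamma(F)=\gamma(G)$, and this step is \emph{also} left implicit in the paper (it is needed to invoke Lemma~\ref{lem: same_model}). The concrete issue is that the break-equivalence conjugacy $\eta$ in the definition need not a priori fix $\varphi(0)$: if $\eta(\varphi(0))=\varphi(u_{j_0}(G))$ for some $j_0\neq 0$, then break-equivalence only yields that $\mathcal B(F)$ is a cyclic shift of $\mathcal B(G)$. To close this, one must argue that the rooted GIET conjugacy $h$ (coming from $\gamma(F)=\gamma(G)$) itself witnesses break-equivalence, i.e. that the only orientation-preserving conjugacy between $f$ and $g$ sending $BP(f)$ onto $BP(g)$ is $h$; equivalently, that the only self-conjugacy of $g$ permuting $BP(g)$ is the identity. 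This amounts to showing that, for a.e.\ $T$, the image of $BP(g)$ under a topological conjugacy to the rotation $R_\theta$ has no nontrivial rotational symmetry, which is plausible (and arguably part of what ``typical'' is meant to encode here) but is not automatic. You should either prove this, or state explicitly that the conjugacy in the definition of break-equivalence is taken to fix $\varphi(0)$.
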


\begin{comment}
\begin{lemma}
\label{lem: same_affine_model}
For $f$ and $g$ as in Theorem \ref{thm: rigidity}, we can pick $\omega_f$ $\omega_g$ in Theorem \ref{thm: affine_shadow} such that $\omega_f = \omega_g$.
\end{lemma}

\begin{theorem}[Convergence of renormalizations]
\label{thm: exp_convergence}
For $f$ and $g$ as in Theorem \ref{thm: rigidity}, 
\[ \textup{d}({\mathcal R^nf}, g_n) = O(c^n),\] 
for some $0 < c < 1$.
\end{theorem}
\end{comment}
%{Can this be shown to imply convergence of Zorich times?}

\section{Proof of the main results}
\label{sc: outline}

This section contains the proofs of our main results. In Sections \ref{sc:proof_rigidity} and \ref{sc:proof_circle}, we deduce Theorems \ref{thm: rigidity}, \ref{thm: circle_thm} from Theorems \ref{thm: affine_shadow}, \ref{thm: linearization}, respectively.  For the sake of clarity of exposition, we split the proofs of Theorems \ref{thm: affine_shadow} and \ref{thm: linearization}, which are given in Sections \ref{sc:proof_shadow} and \ref{sc:proof_linearization}, into several propositions, whose proof we defer to Sections \ref{sc: existenceofaffineshadow} and \ref{sc: conjugacy}, respectively.

\subsection{Proof of Theorem \ref{thm: rigidity} }
\label{sc:proof_rigidity}

We will deduce Theorem \ref{thm: rigidity} from Theorems \ref{thm: affine_shadow}, \ref{thm: linearization}.  For this, we need to show that the affine models of $f$ and $g$ given by  Theorem \ref{thm: linearization} coincide. %In fact, we have the following.

\begin{lemma}
\label{lem: same_model}
Let $T = (\lambda, \pi)  \in \Delta_{\A} \times \mathfrak{G}^0_d$ verifying Theorem \ref{thm: affine_shadow} and let $f$, $g$  be two boundary-equivalent GIETs of class $C^2$ with $\mathcal{N}(f)=\mathcal{N}(g)=0$  and  $\gamma(f) = \gamma(T) = \gamma(g)$ verifying the EC Condition. Then $\omega_f = \omega_g$, where $\omega_f$ and $\omega_g$ are the vectors given by Theorem \ref{thm: affine_shadow} when applied to $f$ and $g$, respectively.
\end{lemma}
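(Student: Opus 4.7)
The plan is to exploit the fact, recorded just before Proposition \ref{prop: boundary_AIETs}, that the boundary of an AIET with log-slope $\omega$ is given by $\mathfrak{B}_s(S) = \langle \omega, \lambda(\mathcal{O}_s)\rangle$, where the vectors $\{\lambda(\mathcal{O}_s)\}$ form a basis of $\Kpi$. Hence the boundary of an AIET is determined by, and in turn determines, its log-slope's projection to $\Kpi$. By Theorem \ref{thm: affine_shadow}, each $\omega_f$ and $\omega_g$ is encoded by the boundary of $f$ and $g$ respectively, so boundary-equivalence should force equality.

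Concretely, first I would invoke the hypothesis that $f$ and $g$ are boundary-equivalent to conclude that $\mathcal{B}(f) = \mathcal{B}(g)$. Next, I would apply Theorem \ref{thm: affine_shadow} to $f$ and $g$ to produce the vectors $\omega_f, \omega_g \in \Kpi$ together with AIETs $S_f, S_g$ whose log-slopes $\omega^{(f)}, \omega^{(g)} \in E^{cs}(\lambda, \pi)$ satisfy $\pi_{\Kpi}(\omega^{(f)}) = \omega_f$, $\pi_{\Kpi}(\omega^{(g)}) = \omega_g$, and such that $\mathcal{B}(S_f) = \mathcal{B}(f)$ and $\mathcal{B}(S_g) = \mathcal{B}(g)$. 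Combining this with the previous step yields $\mathcal{B}(S_f) = \mathcal{B}(S_g)$.

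Finally, since any AIET has zero mean non-linearity (its log-derivative is piecewise constant, so $D \log DS = 0$ almost everywhere), Proposition \ref{prop: boundary_AIETs} applies to $S_f$ and $S_g$ and gives $\pi_{\Kpi}(\omega^{(f)}) = \pi_{\Kpi}(\omega^{(g)})$, that is, $\omega_f = \omega_g$.

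The only point requiring care is guaranteeing the existence of actual AIETs $S_f, S_g$ realizing the hypothesis of Theorem \ref{thm: affine_shadow}, which amounts to verifying that vectors of $\Kpi$ sit inside $E^{cs}(\lambda, \pi)$ — a standard fact about the Zorich cocycle since the kernel of $\Omega_\pi$ is central for the associated symplectic dynamics. Should one wish to sidestep even this, the conclusion can be reached directly: the formula $\mathfrak{B}_s(S) = \langle \omega, \lambda(\mathcal{O}_s)\rangle$ combined with the fact that $\{\lambda(\mathcal{O}_s)\}$ is a basis of $\Kpi$ shows that an element of $\Kpi$ is uniquely determined by the system of pairings constituting the boundary, so $\mathcal{B}(f) = \mathcal{B}(g)$ alone forces $\omega_f = \omega_g$. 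No step in the argument is a substantive obstacle; the lemma is essentially a translation of boundary-equivalence into the language of log-slope projections.
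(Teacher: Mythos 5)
Your proposal is correct and takes essentially the same route as the paper: produce the affine shadows $S_f$, $S_g$, use boundary-equivalence of $f$ and $g$ to get $\mathcal{B}(S_f)=\mathcal{B}(S_g)$, and conclude $\omega_f=\omega_g$ from Proposition \ref{prop: boundary_AIETs}. The only cosmetic difference is that you read $\mathcal{B}(S_f)=\mathcal{B}(f)$ directly off the conclusion of Theorem \ref{thm: affine_shadow}, whereas the paper derives it from the $C^1$ conjugacy of Theorem \ref{thm: linearization} together with Proposition \ref{prop: boundary_properties}(3); note also that your closing ``direct'' shortcut still tacitly uses the existence of AIETs in $\textup{Aff}(T,\omega)$ with $\pi_{\textup{Ker}(\Omega_\pi)}(\omega)=\omega_f$, since $\omega_f$ is linked to $\mathcal{B}(f)$ only through such models.
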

\begin{proof}
Let $S_f$, $S_g$ be AIETs with $\gamma(S_f) = \gamma(T) = \gamma(S_g)$ and log-slopes $\omega_f$ and $\omega_g$, respectively. By Theorem \ref{thm: linearization}, $f$ (resp. $g$) is $C^1$ conjugated, as GIET, to $S_f$ (resp. $S_g$). Hence
\[ \mathcal{B}(S_f) = \mathcal{B}(f) = \mathcal{B}(g) = \mathcal{B}(S_g).\]
Therefore, $\omega_f = \omega_g$ by Proposition \ref{prop: boundary_AIETs}. 
\end{proof}

\begin{proof}[Proof of Theorem \ref{thm: rigidity}]
The result follows directly from Theorems \ref{thm: affine_shadow}, \ref{thm: linearization}, and Lemma \ref{lem: same_model}.
\end{proof}

\subsection{Proof of Theorem \ref{thm: affine_shadow}}
\label{sc:proof_shadow}
Throughout this section, we fix $T = (\tau, \lambda, \pi) \in X$ as in Proposition \ref{prop: generic_condition} and $f$ as in Theorem \ref{thm: affine_shadow}, where $X$ denotes the domain of the acceleration $\mathcal{R}: X \to X$ defined in Section \ref{sc: defR}. We shall see that under these assumptions, the conclusions of the theorem hold.

We start by showing the existence of a vector that `shadows,' with respect to the accelerated height cocycle $Q$ associated with our acceleration $\mathcal{R}$, the logarithm of the mean non-linearity of subsequent renormalizations $\mathcal{R}^n f$ of $f$. 

\begin{proposition}%[Lemma 3.4 in CS]
\label{prop: bound_slopes}
There exists $\omega \in E^{c}(\tau, \lambda, \pi)$ such that
\[ |\omega^n - L^n| = O(c^n),\]
for some $0 < c < 1$, where 
\[\omega^n = \HC{0}{n}(\tau, \lambda, \pi)\omega,\] and $L^n = (L^n_\alpha)_{\alpha \in \A}$ is given by
\begin{equation}
\label{eq: avg_logslope}
L^n_\alpha = \ln  \left( \frac{1}{|I^n_\alpha(f)|} \int_{I^n_\alpha(f)} Df^{q^n_\alpha}(s)ds \right).
\end{equation}
\end{proposition}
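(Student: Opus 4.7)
The plan is to proceed in two steps: first, to establish that the sequence $(L^n)_n$ is an exponentially accurate orbit of the accelerated height cocycle ${}^T Q$; second, to perform a shadowing argument on the central Oseledets subspace to produce~$\omega$.

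For the first step, I will write $L^n_\alpha = \ln(|(\mathcal{R}^n f)(I^n_\alpha)|/|I^n_\alpha|)$ and apply the chain rule along the $(\mathcal{R}^n f)$-orbit of $I^{n+1}_\alpha$,
\[ Df^{q^{n+1}_\alpha}(x) = \prod_{j=0}^{m_\alpha - 1} Df^{q^n_{\beta_j}}\!\bigl((\mathcal{R}^n f)^j(x)\bigr), \]
where the sequence $(\beta_j)$ visits each index $\beta$ exactly $Q_{\beta\alpha}(\mathcal{R}^n T)$ times and $m_\alpha = \sum_\beta Q_{\beta\alpha}$. By Corollary \ref{cor: exp_convergence}, $|\ln Df^{q^n_\beta}(t) - L^n_\beta| = O(c^n)$ uniformly for $t \in I^n_\beta$. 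Summing the logarithms and using that $m_\alpha \leq \|Q\|$ has only sub-exponential growth (Proposition \ref{prop: generic_condition}, Conditions \ref{exp_growth}--\ref{Roth_type}), the accumulated error stays of order $O(\tilde c^n)$ for some $\tilde c \in (c, 1)$. Exponentiating, integrating over $I^{n+1}_\alpha$, and taking the log yields
\[ L^{n+1} = {}^T Q(\mathcal{R}^n T)\, L^n + \epsilon_n, \qquad |\epsilon_n| = O(\tilde c^n). \]

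For the second step, I use the Oseledets splitting $\R^{\mathcal A} = E^s \oplus E^c \oplus E^u$ for ${}^T Q$, which is invariant under the cocycle. The EC Condition directly provides $|L^{n,s}|, |L^{n,u}| = O(c^n)$, so these components of $L^n$ already decay exponentially; only the component in $E^c(\mathcal{R}^n T)$ must be tracked. Setting $\delta_n = L^{n,c} - \omega^{n,c}$, the recurrence $\delta_{n+1} = {}^T Q|_{E^c}\, \delta_n + \epsilon_n^c$, together with the two-sided boundedness of $({}^T Q)^{(n)}|_{E^c}$ (Proposition \ref{prop: generic_condition}, Condition \ref{bounded_central}), lets me define $\omega \in E^c(\tau, \lambda, \pi)$ by the absolutely convergent series
\[ \omega := L^{0,c} + \sum_{k=0}^{\infty} \bigl(({}^T Q)_{0,k+1}|_{E^c}\bigr)^{-1} \epsilon_k^c. \]
A telescoping computation then gives $|\delta_n| = O(\tilde c^n)$, and combining with the EC bounds on $L^{n,s}, L^{n,u}$ yields $|L^n - \omega^n| = O(c^n)$ for an appropriate $c \in (0,1)$.

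The most delicate point is the first step: the propagation of the pointwise error $O(c^n)$ through the product expansion of $Df^{q^{n+1}_\alpha}$ involves $m_\alpha$ factors, which can grow like $\|Q\|$. The Roth-type bound in Proposition \ref{prop: generic_condition}(\ref{Roth_type}) guarantees that $\|Q\|$ grows only sub-exponentially, narrowly beating the geometric decay from the EC Condition; this precise balance is what makes the approximate cocycle identity hold. The tempered growth of the Oseledets projections is also needed to ensure, after absorbing a sub-exponential factor into $\tilde c$, that $|\epsilon_n^c| = O(\tilde c^n)$.
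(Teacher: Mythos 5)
Your proposal is correct and follows essentially the same route as the paper: first showing that $(L^n)_n$ is an exponentially accurate pseudo-orbit of the accelerated height cocycle (the paper's Lemma \ref{lem: pseudo-orbit}, proved there via the mean value theorem and Corollary \ref{cor: exp_convergence}, with the error controlled by Conditions \ref{exp_growth} and \ref{Roth_type}), and then constructing $\omega$ on the central space by a shadowing/telescoping argument using Condition \ref{bounded_central}, with the stable and unstable components handled directly by the EC Condition. Your explicit series for $\omega$ is precisely the limit of the paper's Cauchy sequence $v_n = \HC{0}{n}^{-1}L^{n,c}$, so the two arguments coincide in substance.
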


Using this vector, we construct AIETs whose orbit under $\mathcal{R}$ shadows that of $f$. 

\begin{proposition}%[Lemma 3.8 in CS]
\label{prop: bound_intervals}
Let $\omega$ as in Proposition \ref{prop: bound_slopes} and $S \in \textup{Aff}(T, \omega)$. Then
\[ \max_{\alpha \in \A} \left| \frac{|I^n_\alpha(f)|}{|I^n(f)|} -  \frac{|I^n_\alpha(S)|}{|I^n(S)|} \right| = O(c^n),\] 
for some $0 < c < 1$.
\end{proposition}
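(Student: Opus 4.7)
The plan is to compare the length vectors via the matrix identity \eqref{lengthrelation}, which gives
$\lambda(f) = A^{N_n}(f)\lambda^n(f)$ and $\lambda(S) = A^{N_n}(S)\lambda^n(S)$. Since $\gamma(f) = \gamma(S) = \gamma(T)$, the combinatorial skeleton $A^{N_n}$ is shared by the two weighted matrices. For the AIET $S$, each derivative $DS^{m_i(\alpha,\beta)}$ is \emph{constant} on $I^n_\beta(S)$ (the Rauzy-Veech towers avoid the break points of $S$), so $A^{N_n}(S)$ is an explicit matrix depending only on $\omega$ and the combinatorics. In particular it coincides for every $S \in \textup{Aff}(T,\omega)$, and two such choices yield normalized length vectors that differ only through the action of $[A^{N_n}(S)]^{-1}$ on their initial data, which is projectively contracting by Proposition \ref{prop: generic_condition}\ref{balanced_base}.

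The crucial step is the entry-wise perturbation bound
\[ A^{N_n}_{\alpha\beta}(f) = A^{N_n}_{\alpha\beta}(S)\bigl(1 + O(c^n)\bigr). \]
By Corollary \ref{cor: exp_convergence}, the full-return derivative $Df^{q^n_\alpha}$ on $I^n_\alpha(f)$ has distortion $1+O(c^n)$ and mean value $e^{L^n_\alpha}$; Proposition \ref{prop: bound_slopes} identifies $e^{L^n_\alpha}$ with $e^{\omega^n_\alpha}$ up to a factor $1+O(c^n)$, and $e^{\omega^n_\alpha}$ is exactly the constant slope of $S^{q^n_\alpha}$ on $I^n_\alpha(S)$. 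To extend this control from the full return time $q^n_\alpha$ to each partial return time $m_i(\alpha,\beta) < q^n_\alpha$ appearing in the sum defining $A^{N_n}_{\alpha\beta}(f)$, one telescopes: the orbit of $I^n_\beta(f)$ up to time $m_i$ is decomposed into passes through Rauzy-Veech towers at coarser levels $k \le n$, on each of which Corollary \ref{cor: exp_convergence} (applied at that level) yields a distortion factor $1 + O(c^k)$, and the geometric accumulation remains $O(c^n)$.

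To conclude, Proposition \ref{prop: generic_condition}\ref{balanced_base} ensures that the combinatorial matrix contains a positive block with bounded entries at every step, so $[A^{N_n}(S)]^{-1}$ contracts the Hilbert projective metric on the positive cone at a uniform exponential rate. Combining this projective contraction -- which absorbs the discrepancy between the initial data $\lambda(f)$ and $\lambda(S)$ -- with the matrix perturbation bound from the previous paragraph yields
\[ \max_{\alpha\in\A}\left|\frac{|I^n_\alpha(f)|}{|I^n(f)|} - \frac{|I^n_\alpha(S)|}{|I^n(S)|}\right| = O(c^n). \]
The main obstacle is the partial-iterate distortion estimate: Corollary \ref{cor: exp_convergence} as stated controls only the full return time $q^n_\alpha$, and the telescoping decomposition through coarser renormalization towers is the delicate technical point that must be carried out carefully to ensure the errors do not accumulate beyond $O(c^n)$.
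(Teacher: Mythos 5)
There is a genuine gap, and it sits in both of your key steps. First, the entry-wise bound $A^{N_n}_{\alpha\beta}(f)=A^{N_n}_{\alpha\beta}(S)\bigl(1+O(c^n)\bigr)$ for the \emph{cumulative} (level-$0$-to-level-$n$) matrices of \eqref{matrixf} is not attainable by the telescoping you describe. The entries of $A^{N_n}(f)$ are derivatives $Df^{m_i(\alpha,\beta)}$ along orbit segments that decompose into complete tower passes at \emph{all} levels $k\le n$, and the distortion/slope error contributed at level $k$ is $O(c^k)$ (times a subexponential number of level-$k$ blocks); summing over $k=0,\dots,n$ gives a bounded quantity dominated by the coarse levels, i.e.\ a relative error $O(1)$, not $O(c^n)$. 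This is precisely why the paper proves such a comparison only for the \emph{one-step} matrices $V_{n}$ relating levels $n$ and $n+1$ (Lemma \ref{lem: lengths_pseudo_orbit}, via Lemma \ref{lem: aux_lemma}): there every block in the decomposition lives at level $n$, so Corollary \ref{cor: exp_convergence} and Proposition \ref{prop: bound_slopes} give an error $O(c^n)$, which survives multiplication by $\|\HCT{n}{n+1}\|$ thanks to Condition \ref{Roth_type}.

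Second, your concluding mechanism runs the contraction in the wrong direction. Condition \ref{balanced_base} of Proposition \ref{prop: generic_condition} guarantees a positive block in the \emph{forward} matrices, so it is the projectivization of $A^{N_n}(S)$ (mapping level-$n$ length data to level-$0$ length data) that contracts the Hilbert metric; its inverse is not a non-negative matrix and does not act on the positive cone as a contraction --- pulling back by it \emph{amplifies} discrepancies, which is exactly why closeness of the level-$0$ partitions of $f$ and $S$ (which in fact are not close at all, only both in the simplex) cannot be propagated upward to level $n$ this way. The paper's proof circumvents this by never comparing through level $0$: it shows $\tilde I^k(f)$ is an $O(c^k)$-pseudo-orbit for the exact AIET length dynamics $\tilde V_k$ (Corollary \ref{lem:expdecay}), proves each $V_k$ is a uniform projective $\kappa$-contraction (Lemma \ref{lem: contraction}), and then shadows from level $2n$ \emph{down} to level $n$: the $O(1)$ projective discrepancy at level $2n$ (bounded by the diameter of the compact set of Lemma \ref{lem: compact}) is contracted to $\kappa^n D$, while the pseudo-orbit errors sum to $O(c^n)$, giving $d_p(\tilde I^n(f),\tilde I^n(S))=O(\max\{c,\kappa\}^n)$. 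To repair your argument you would need to replace the cumulative-matrix comparison and the inverse-contraction step by some version of this one-step-plus-forward-shadowing scheme.
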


%From Proposition \ref{prop: bound_slopes} and Proposition \ref{prop: bound_intervals} we can conclude the following.

\begin{proposition}%[Lemma 3.13 in CS]
\label{prop: bound_image_intervals}
Let $\omega$ as in Proposition \ref{prop: bound_slopes} and $S \in \textup{Aff}(T, \omega)$. Then
\[ \max_{\alpha \in \A} \left| \frac{|\mathcal{R}^n f(I^n_\alpha(f))|}{|I^n(f)|} -  \frac{|\mathcal{R}^nS(I^n_\alpha(S))|}{|I^n(S)|} \right| = O(c^n),\] 
for some $0 < c < 1$.
\end{proposition}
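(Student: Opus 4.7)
The plan is to reduce the statement to Propositions \ref{prop: bound_slopes} and \ref{prop: bound_intervals} by writing each ratio as the product of a normalized interval length and an exponentiated log-slope, and then bounding the resulting difference term-by-term.

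First, since $\mathcal{R}^n f$ coincides with $f^{q^n_\alpha}$ on $I^n_\alpha(f)$, the definition \eqref{eq: avg_logslope} of $L^n_\alpha$ yields
\[ |\mathcal{R}^n f(I^n_\alpha(f))| = \int_{I^n_\alpha(f)} Df^{q^n_\alpha}(s)\,ds = |I^n_\alpha(f)|\, e^{L^n_\alpha}.\]
Since $S$ is affine with log-slope vector $\omega$, the restriction $S^{q^n_\alpha}|_{I^n_\alpha(S)}$ is affine with slope $e^{\omega^n_\alpha}$, where $\omega^n = \HC{0}{n}(\tau,\lambda,\pi)\omega$ is exactly the log-slope vector of $\mathcal R^n S$, obtained via the standard transformation rule of log-slopes of AIETs under the (accelerated) height cocycle. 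Hence
\[|\mathcal{R}^n S(I^n_\alpha(S))| = |I^n_\alpha(S)|\, e^{\omega^n_\alpha}.\]
Setting $r^n_\alpha = |I^n_\alpha(f)|/|I^n(f)|$ and $s^n_\alpha = |I^n_\alpha(S)|/|I^n(S)|$, the quantity to estimate becomes
\[ r^n_\alpha\, e^{L^n_\alpha} - s^n_\alpha\, e^{\omega^n_\alpha} \;=\; r^n_\alpha\bigl(e^{L^n_\alpha} - e^{\omega^n_\alpha}\bigr) + (r^n_\alpha - s^n_\alpha)\, e^{\omega^n_\alpha}.\]

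Three ingredients already at hand control the two summands. By Proposition \ref{prop: bound_slopes}, $|L^n - \omega^n| = O(c^n)$. Since $\omega \in E^c(\tau, \lambda, \pi)$, Proposition \ref{prop: generic_condition}(\ref{bounded_central}) gives $\|\omega^n\| = O(1)$, and hence also $\|L^n\| = O(1)$. Combining these yields
\[|e^{L^n_\alpha} - e^{\omega^n_\alpha}| \;\leq\; e^{\|\omega^n\|_\infty}\,\bigl|e^{L^n_\alpha - \omega^n_\alpha} - 1\bigr| \;=\; O(c^n).\]
Proposition \ref{prop: bound_intervals} gives $|r^n_\alpha - s^n_\alpha| = O(c^n)$, while $r^n_\alpha \leq 1$ trivially. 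Plugging these into the identity above and applying the triangle inequality gives the claimed $O(c^n)$ bound uniformly over $\alpha \in \A$.

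The only delicate point is the uniform boundedness of $e^{\omega^n_\alpha}$: if $\omega$ had any component in the unstable Oseledets subspace $E^u$, this exponential factor would grow and completely destroy the shadowing estimate. The construction of $\omega$ in the central subspace $E^c$ in Proposition \ref{prop: bound_slopes}, together with the bounded-central-norm property in Proposition \ref{prop: generic_condition}(\ref{bounded_central}), is precisely what makes the whole argument close up; once this is secured, the rest is routine bookkeeping.
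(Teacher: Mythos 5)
Your proposal is correct and follows essentially the same route as the paper: the same identities $|\mathcal{R}^n f(I^n_\alpha(f))| = e^{L^n_\alpha}|I^n_\alpha(f)|$ and $|\mathcal{R}^n S(I^n_\alpha(S))| = e^{\omega^n_\alpha}|I^n_\alpha(S)|$, the same triangle-inequality splitting, and the same inputs (Propositions \ref{prop: bound_slopes} and \ref{prop: bound_intervals}), with the uniform boundedness of the exponential factors obtained from Condition \ref{bounded_central} — which is exactly the source of the bound \eqref{eq:boundedderivative} the paper invokes at the end. The only difference is cosmetic (which factor is pulled out of each term of the decomposition), so no further comment is needed.
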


%Let us prove  Theorem \ref{thm: affine_shadow} assuming Propositions \ref{prop: bound_slopes}, \ref{prop: bound_intervals}, \ref{prop: bound_image_intervals}, whose proof we defer to the following subsections. 

Assuming the propositions above, we can now prove Theorem \ref{thm: affine_shadow}.

\begin{proof}[Proof of Theorem \ref{thm: affine_shadow}] Since the set $X$ is of full measure, it is clear that for almost every $(\lambda,\pi)$ there exists $\tau$ such that $(\tau,\lambda,\pi)\in X$ verifies Proposition \ref{prop: generic_condition}. 

Let us fix such $(\tau,\lambda,\pi)\in X$  verifying Proposition \ref{prop: generic_condition}, and let $\omega^* \in E^c(\tau,\lambda, \pi)$ be the vector given by Proposition \ref{prop: bound_slopes} when applied to $f$. Denote $\omega_f = \pi_{\Kpi}(\omega^*).$

Let $\omega \in E^c(\tau,\lambda, \pi)$ such that $\pi_{\Kpi}(\omega) = \omega_f$ and $S \in \textup{Aff}(T, \omega)$.  Notice that $\omega - \omega^* \in E^s(\tau,\lambda, \pi)$, since
 $$\pi_{\Kpi}\mid_{E^c(\tau,\lambda,\pi)}: E^c(\tau,\lambda,\pi)\to \Kpi$$ is an isomorphism. That the previous application is, in fact, in isomorphism follows from the duality of the height and length cocycle (see \cite{zorich_deviation_1997}), which implies that $\orth{\Kpi} \cap E_c(\tau,\lambda,\pi) = \{0\}.$ 

Therefore, $\omega$ verifies the conclusions of Proposition \ref{prop: bound_slopes}, and $S$ verifies Propositions \ref{prop: bound_intervals} and \ref{prop: bound_image_intervals}.  By Proposition \ref{prop: bound_slopes}, \ref{prop: bound_intervals}, \ref{prop: bound_image_intervals} and the EC Condition,
\[ \textup{d}_{C^1}({\tilde{\mathcal R}^nf}, {\tilde{\mathcal R}^nS}) = o(c^n),\]
for some $0 < c < 1$. Finally, it follows from Proposition \ref{prop: boundary_properties}, the  EC Condition and the previous equation that
\[ \mathcal{B}(f) = \mathcal{B}({\mathcal R^nf}) = \mathcal{B}({\mathcal R^nS}) + o(c^n) = \mathcal{B}(S) + o(c^n),\]
for some $0 < c < 1$. Therefore $\mathcal{B}(f) = \mathcal{B}(S)$.

\end{proof}

%Theorem \ref{thm: affine_shadow} will follow from Proposition \ref{prop: bound_slopes}, \ref{prop: bound_intervals}, \ref{prop: bound_image_intervals} and the EC Condition.  %A proof of this fact will be given at the beginning of Section \ref{sc: existenceofaffineshadow}.

\subsection{Proof of Theorem \ref{thm: linearization}}
\label{sc:proof_linearization}
Recall that for a typical IET $T$, any GIET $f$ with $\gamma(f) = \gamma(T)$ is semi-conjugated to $T$. Hence, for $f$ and $S$ as in Theorem \ref{thm: affine_shadow}, these two maps are semi-conjugated. % by a non-decreasing continuous map. 
%In this section we will show that this map is actually a $C^1$ diffeomorphism. As a first step, we show in the following lemma that a  semi-conjugating map between these two transformations is a Lipschitz homeomorphism.
%The following result from \cite{trujillo_affine_2022}, which states that an AIET is conjugated to an IET as long as the slope vector is properly chosen, implies that a typical GIET $f$ is semi-conjugated to the affine map $S$ given by Theorem \ref{thm: affine_shadow}. 
That is, $S \circ h = h\circ f$ for some continuous non-decreasing surjective function $h: [0,1) \to [0, 1)$.  To prove Theorem \ref{thm: linearization}, we will show that this semi-conjugacy is actually a $C^1$ diffeomorphism. 
 
We start by showing that $h$ is a conjugacy between $f$ and $S$. To achieve this, we will use the following result due to C. Ulcigrai and the second author \cite{trujillo_affine_2022}, which states that in the AIET setting, this semi-conjugacy is actually a conjugacy as long as the log-slope vector belongs to the central-stable space of $T$ for the Zorich height cocycle. Let us point out that this was already known for log-slope vectors in the stable space associated with the Zorich height cocycle (see \cite[Theorem 1]{cobo_piece-wise_2002}).

\begin{theorem}[Theorem 1 in \cite{trujillo_affine_2022}]
\label{thm: affconj}
	For almost every IET $T=(\lambda,\pi)$, if $\omega\in E^{cs}(\lambda,\pi)$ then any $S\in\textup{Aff}(T,\omega)$ is $C^0$-conjugated to $T$. 
\end{theorem}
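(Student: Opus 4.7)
The plan is to reduce the theorem to showing that any AIET $S\in\textup{Aff}(T,\omega)$ with $\omega\in E^{cs}(\lambda,\pi)$ has no wandering intervals. Since $S$ and $T$ share the combinatorial rotation number $\gamma(T)$, Yoccoz's analogue of Poincar\'e's theorem gives a continuous non-decreasing surjection $h:[0,1)\to[0,1)$ satisfying $T\circ h=h\circ S$; this $h$ is a $C^0$-conjugacy precisely when $S$ is minimal, i.e., admits no wandering intervals. The case $\omega\in E^s(\lambda,\pi)$ is Cobo's theorem \cite{cobo_piece-wise_2002} and proceeds via exponential contraction of slopes under renormalization. The new ingredient required here is handling of the central direction $E^c$, where Lyapunov exponents vanish, so that slopes may evolve only subexponentially rather than decay.

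First, I would set up the renormalization dynamics of AIETs and record the fundamental fact that the log-slope vector of an AIET transforms under Zorich induction exactly by the height cocycle: $\tilde{\mathcal Z}^n S\in\textup{Aff}(\tilde{\mathcal Z}^n T,\omega^n)$ with $\omega^n={}^T\!B^n(\lambda,\pi)\,\omega$. Since $\omega\in E^{cs}$, Oseledets' theorem ensures $\|\omega^n\|=O(e^{\varepsilon n})$ for every $\varepsilon>0$ along a full-density subsequence. Passing further to a balanced subsequence, in the spirit of Proposition \ref{prop: generic_condition}(\ref{balanced_base}), I would also arrange that the induction lengths $\lambda^n_\alpha$ remain uniformly comparable to $|\lambda^n|$. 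On such a subsequence, $\tilde{\mathcal Z}^n S$ is a normalized AIET with uniformly balanced branches and only subexponentially varying slopes.

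Next, I would assume for contradiction that $S$ admits a wandering interval $J$ of positive Lebesgue measure. Along the balanced subsequence, $J$ can be tracked backwards by an appropriate iterate of $S$ into a descendant $J_n$ sitting inside a single tower base $I^n_\alpha(S)$ of the $n$-th renormalization. The disjointness of the orbit $\{S^k(J)\}_{k\in\Z}$ in $[0,1)$ forces $|J_n|/|I^n_\alpha(S)|\to 0$. On the other hand, the chain rule gives $|J|=e^{\omega^n_\alpha}|J_n|$, where $\omega^n_\alpha$ is a coordinate of $\omega^n$ and hence only subexponential in $n$. Combined with the balanced-length control, this produces competing qualitative estimates on $|J_n|/|I^n_\alpha(S)|$.

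The main obstacle is closing the gap between an \emph{a priori} qualitative upper bound and a merely subexponential lower bound. The key will be the duality between the height and length Zorich cocycles: $E^{cs}(\lambda,\pi)$ of ${}^T\!B$ is precisely the annihilator of the unstable direction of $B^{-1}$. This duality forces $\omega^n$ to pair trivially with the exponentially expanding coordinates of the length cocycle, so that along the balanced Zorich times, $|I^n_\alpha(S)|$ decays essentially at the same exponential rate as $|\lambda^n_\alpha|$ even though $S\neq T$. A quantitative version of this matching, combined with a Denjoy-type summability argument applied to $\sum_{k\in\Z}|S^k(J)|\le 1$, should upgrade $|J_n|/|I^n_\alpha(S)|\to 0$ into genuine exponential decay that dominates the $O(e^{\varepsilon n})$ slope factor, forcing $|J|=0$ and completing the proof by contradiction. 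Making this duality-based cancellation precise, and verifying that it survives the passage to an appropriate acceleration of the Zorich cocycle, is the technical heart of the argument.
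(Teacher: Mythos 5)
Your opening reduction is the same as the intended one: since $S$ and $T$ share the same combinatorial rotation number, Yoccoz's semi-conjugacy exists, and the whole content of the theorem is that $S$ has no wandering intervals; Cobo's theorem handles $\omega\in E^s$ and the issue is the central direction. The gap is in the step you yourself flag as the "technical heart". Two of your pivotal claims are unproven and, in effect, equivalent to the theorem: (i) that disjointness of the orbit of $J$ forces $|J_n|/|I^n_\alpha(S)|\to 0$, and (ii) that duality between the height and length cocycles forces $|I^n_\alpha(S)|$ to decay at essentially the same exponential rate as $\lambda^n_\alpha$. Claim (ii) cannot follow from Oseledets-generic subexponential bounds $|\omega^n|=O(e^{\varepsilon n})$ alone: the lengths of the renormalized intervals of $S$ obey the twisted recursion $I^n(S)=V_n I^{n+1}(S)$, and the multiplicative corrections $e^{\pm\omega^m_\alpha}$ compound over $m\le n$, so subexponential coordinate bounds do not yield a matching of exponential rates. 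A sanity check that the scheme is incomplete as stated: if subexponential slope control plus "disjointness $\Rightarrow$ ratio $\to 0$" sufficed, the same argument would exclude wandering intervals for log-slopes with a nonzero unstable component, contradicting the Marmi--Moussa--Yoccoz construction of AIETs with wandering intervals. Also, the identity $|J|=e^{\omega^n_\alpha}|J_n|$ is not correct as written: the derivative of $S$ along a partial climb of a tower is the exponential of a partial Birkhoff sum of the log-slope, not a coordinate of $\omega^n$ (this is repairable via the standard decomposition of partial sums into special Birkhoff sums, but it must be said).

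The actual proof in the cited reference runs through a different quantitative mechanism, which your outline never supplies. One works directly with the orbit of a candidate wandering interval: $\sum_{k\in\Z}|S^k(J)|\le 1$ and $|S^k(J)|\asymp e^{\sigma_k}|J|$, where $\sigma_k$ are Birkhoff sums of the piecewise-constant function $\log DS$ (values $\omega_\alpha$), so a wandering interval forces $\sigma_k\to-\infty$ summably in both time directions. The key input is not a duality pairing but a recurrence/acceleration argument for the extended Zorich cocycle producing a subsequence of renormalization times along which the central Oseledets block of the height cocycle is \emph{bounded} (this is exactly the property the present paper imports from that reference as Condition \ref{bounded_central} of Proposition \ref{prop: generic_condition}), while the stable component of $\omega^n$ decays. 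This gives infinitely many $k$ with $\sigma_k$ bounded below at the relevant points, so $\sum_k e^{\sigma_k}$ diverges unless $|J|=0$. Mere subexponential growth on $E^{cs}$, which is all your proposal extracts from Oseledets, is not enough; the boundedness along special times, and the passage from special to arbitrary Birkhoff sums, are precisely the missing ingredients.
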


The previous result implies that for $f$ and $S$ as in Theorem \ref{thm: affine_shadow}, the AIET $S$ is topologically conjugated to an IET $T$, in particular, $S$ has no wandering intervals. Combining this with Theorem \ref{thm: affine_shadow}, we shall see in Lemma \ref{lm: conjisLandcont} that $f$ cannot have wandering intervals, and thus it is also conjugated to $T$. Hence, $f$ and $S$ are topologically conjugated, which means that the semi-conjugacy $h$ is actually a conjugacy between these two maps. 
 
Notice that if $h$ was of class $C^1$, by taking derivatives and logarithm in $S \circ h = f \circ h$, we obtain
\[ (\log DS )\circ h - \ln Df = (\log Dh )\circ f - \log Dh. \]
This motivates us to consider the following cohomological equation
\begin{equation}
\label{eq: cohomological_equation}
(\log DS )\circ h - \ln Df = \psi \circ f - \psi.
\end{equation}
Using an appropriate Diophantine condition and Gottschalk-Hedlund Theorem, we prove the following.

\begin{proposition}%[Section 4.1 in CS]
\label{prop: cohomological_equation}
Let $f,$ $S$ as in Theorem \ref{thm: affine_shadow} and assume that $f$ is semi-conjugated to $S$ via $h$. Then, there exists a continuous solution $\psi: [0, 1] \to \R$ of (\ref{eq: cohomological_equation}). 

Moreover, if $f$ and $S$ are break-equivalent circle homeomorphisms, $\psi$ can be taken as a continuous function on the circle. 
\end{proposition}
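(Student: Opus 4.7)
The plan is to verify the hypotheses of Proposition \ref{prop: cohomological_equation0}. Set $\varphi = (\log DS)\circ h - \log Df$, which belongs to $C^0\big(\bigsqcup_{\alpha\in\A}I_\alpha(f)\big)$. Using the semi-conjugacy relation $h\circ f = S\circ h$ and the chain rule, the Birkhoff sum telescopes to
\[
\mathcal{S}^f_n\varphi(x) \;=\; \log DS^n(h(x)) - \log Df^n(x),
\]
so it suffices to prove that this quantity is uniformly bounded in $n$ and $x$. Once this is done, Proposition \ref{prop: cohomological_equation0} directly supplies a continuous $\psi:[0,1]\to\R$ with $\varphi = \psi\circ f - \psi$.

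The heart of the argument is therefore the uniform bound on the Birkhoff sums, which I would establish by decomposing any orbit segment $\{x,f(x),\dots,f^{n-1}(x)\}$ according to the Rohlin tower structure associated with the acceleration $\mathcal{R}$ from Section \ref{sc: defR}. Such a decomposition expresses the segment, in the spirit of an Ostrowski-type expansion, as a bounded number of full passages through induction intervals $I^m_\alpha(f)$ for $m$ ranging up to some maximal renormalization level $M(n)$. The balanced base and bounded central conditions of Proposition \ref{prop: generic_condition} ensure that at each level $m$ the number of such passages is bounded by a universal constant. For each complete passage of length $q^m_\alpha$, the bounded distortion estimate from Corollary \ref{cor: exp_convergence} gives $\log Df^{q^m_\alpha}(y) = L^m_\alpha + O(c^m)$ for $y\in I^m_\alpha(f)$, while the affine structure of $S$ yields $\log DS^{q^m_\alpha}(\cdot)\equiv \omega^m_\alpha$ on $I^m_\alpha(S)$. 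Combining these with Proposition \ref{prop: bound_slopes}, namely $|L^m-\omega^m|=O(c^m)$, each passage contributes $O(c^m)$ to the discrepancy $\log Df^n(x) - \log DS^n(h(x))$. Summing the resulting geometric series over all renormalization levels yields a bound independent of $n$ and $x$, as required.

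For the circle setting, the assumption that $f$ and $S$ are break-equivalent circle homeomorphisms implies that $h$ sends the break points of $f$ to those of $S$ with matching log-jump ratios. Since $\log DS$ is piecewise constant with jumps precisely encoded by the log-slope vector $\omega$ of $S$, the jumps of $(\log DS)\circ h$ cancel exactly with those of $\log Df$, so $\varphi$ extends to a continuous function on $\T$. The Birkhoff sum estimate above is unchanged, and the Gottschalk--Hedlund argument underlying Proposition \ref{prop: cohomological_equation0} can be run directly on the circle, producing a continuous solution $\psi:\T\to\R$.

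The main obstacle is the orbit decomposition and contribution count: while the general strategy is classical, one needs to combine the balanced base property of $\mathcal{R}$, the uniform control on $\|\HCT{0}{n}|_{E^c}\|$ from item \ref{bounded_central} of Proposition \ref{prop: generic_condition}, and the exponential convergence provided by the EC Condition and Corollary \ref{cor: exp_convergence} in a coherent way, so that the per-level contributions form a convergent geometric series with a bound depending only on the constants appearing in Proposition \ref{prop: generic_condition}.
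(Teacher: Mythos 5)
Your overall strategy coincides with the paper's: telescope the Birkhoff sums of $\varphi=(\log DS)\circ h-\log Df$, estimate the special Birkhoff sums over the towers of $\mathcal{R}$ by $O(c^m)$ via Corollary \ref{cor: exp_convergence} and Proposition \ref{prop: bound_slopes}, sum over levels, and invoke Proposition \ref{prop: cohomological_equation0} (and Gottschalk--Hedlund on $\T$ in the break-equivalent case). However, there is a genuine gap in the counting step. You claim that ``at each level $m$ the number of such passages is bounded by a universal constant,'' attributing this to the balanced base and bounded central conditions. This is false in general: in the Ostrowski-type decomposition of a Birkhoff sum into special Birkhoff sums, the multiplicity at level $m$ is bounded by the entries (or norm) of the one-step matrix $\HC{m}{m+1}$, i.e., by how many towers of level $m$ stack inside a tower of level $m+1$, and these norms are \emph{not} uniformly bounded along a typical orbit of $\mathcal{R}$ (for circle rotations this is exactly the unboundedness of partial quotients for a.e.\ rotation number). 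Condition \ref{balanced_base} only controls ratios of interval lengths at a fixed level, and Condition \ref{bounded_central} only controls the cocycle restricted to the central Oseledets space; neither bounds the multiplicities. With an unbounded per-level count, your ``convergent geometric series'' does not follow from the $O(c^m)$ per-passage estimate alone.

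The correct mechanism, which the paper uses and which your proposal never invokes, is the Roth-type Condition \ref{Roth_type} combined with Condition \ref{exp_growth}: together they give $\|\HC{m}{m+1}\|\le C e^{m\tau}$ for every $\tau>0$, i.e., subexponential growth of the one-step matrices. The paper's bound is
\begin{equation*}
\big\| \mathcal S_m^f \varphi \big\|_{[0, 1)} \leq \sum_{n \geq 0} \big\|\HC{n}{n + 1}\big\|\max_{\alpha \in \A} \big\|\mathcal S_{q^n_\alpha}^f \varphi \big\|_{I^n_\alpha(f)} \leq C\sum_{n\ge 0} e^{n(\tau-|\log c|)},
\end{equation*}
which converges after choosing $\tau<|\log c|$. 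So your per-passage estimate and the rest of the argument (including the circle case) are fine, but the uniform boundedness of the Birkhoff sums requires replacing your universal-constant count by this subexponential bound coming from Condition \ref{Roth_type}; without it the proof does not close.
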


\begin{proposition}%[Section in 4.2 in CS]
\label{prop: regular_conjugacy}
Let $f,$ $S$ as in Theorem \ref{thm: affine_shadow} and assume that $f$ is semi-conjugated to $S$ via $h$.  Suppose $\psi: [0, 1] \to \R$ is a continuous solution of (\ref{eq: cohomological_equation}). Then,  $h$ is of class $C^1$ and there exists a constant $C$ such that
\[ Dh = Ce^{\psi}.\]
\end{proposition}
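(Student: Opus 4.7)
The plan is to identify the Radon--Stieltjes measure associated to the monotone continuous map $h$ with the absolutely continuous measure of density $Ce^{\psi}$, by computing the ratio $|h(J)|/|J|$ for a well-chosen nested family of intervals $J$ shrinking to each point and passing to the limit.

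First I would iterate the cohomological equation \eqref{eq: cohomological_equation} along forward orbits of $f$, obtaining on each continuity interval of $f^k$ the identity
\[
Df^k(z) = DS^k(h(z)) \cdot e^{\psi(z) - \psi(f^k(z))}.
\]
For a fixed $x \in [0,1]$ and each $n \ge 1$, I would consider the piece of the Rokhlin tower associated to $\mathcal{R}^n f$ that contains $x$, namely $J^n(x) = f^{k_n}(I^n_{\alpha_n}(f))$ with $0 \le k_n < q^n_{\alpha_n}$. Since $S^{k_n}$ is affine on $I^n_{\alpha_n}(S) = h(I^n_{\alpha_n}(f))$ with some constant derivative $c_n$, integrating the displayed identity over $I^n_{\alpha_n}(f)$ gives
\[
\frac{|h(J^n(x))|}{|J^n(x)|} = \frac{|I^n_{\alpha_n}(S)|}{|I^n_{\alpha_n}(f)|} \cdot \left(\frac{1}{|I^n_{\alpha_n}(f)|}\int_{I^n_{\alpha_n}(f)} e^{\psi(z)-\psi(f^{k_n}(z))}\,dz\right)^{-1}.
\]

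Next I would show that both factors admit clean limits as $n \to \infty$. Because the induction intervals $I^n(f) \supset I^n_{\alpha_n}(f)$ shrink to a single point $z_0$, while $J^n(x) \ni x$ has diameter going to zero (since $f$ has no wandering intervals by Lemma \ref{lm: conjisLandcont}), the uniform continuity of $\psi$ forces the integral factor to converge to $e^{\psi(z_0) - \psi(x)}$, uniformly in $x$. For the length ratio, setting $R_n := |I^n(S)|/|I^n(f)|$, Proposition \ref{prop: bound_intervals} gives $|I^n_\alpha(S)|/|I^n_\alpha(f)| = R_n (1+O(c^n))$ for every $\alpha$. Tracking $R_n$ through the accelerated Rauzy--Veech step and using the uniform balance of normalized lengths from Proposition \ref{prop: generic_condition} to keep the ratios $|I^{n+1}|/|I^n|$ bounded away from $0$ and $1$, the same exponential shadowing forces $R_{n+1}/R_n = 1 + O(c^n)$; this is a geometrically summable perturbation of $1$, so $R_n \to R_\infty \in (0, \infty)$. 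Combining both limits yields $|h(J^n(x))|/|J^n(x)| \to Ce^{\psi(x)}$, uniformly in $x$, with $C := R_\infty e^{-\psi(z_0)}$.

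Finally, since the tower pieces at level $n$ partition $[0,1]$ with maximum diameter tending to $0$, I would close the argument via a Riemann sum. For any $0 \le a < b \le 1$, approximating $[a,b]$ by the tower partition at level $n$ and using the telescoping identity $h(b') - h(a') = \sum_i |h(J^n_i)|$, where $a',b'$ are the nearest partition endpoints, the right-hand side equals $\sum_i Ce^{\psi(x_i)}|J^n_i|(1 + o(1))$, a Riemann sum converging to $C\int_a^b e^{\psi(t)}\,dt$ by continuity of $\psi$. Hence $h(x) = h(0) + C\int_0^x e^{\psi(t)}\,dt$, so $h \in C^1([0,1])$ with $Dh = Ce^\psi$. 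The most delicate step is the convergence of the global scale ratio $R_n$; unlike in the IET case, the length cocycle does not directly compute $|I^n(f)|$ or $|I^n(S)|$ for GIETs or AIETs, so $R_n$ must be controlled indirectly via the $C^1$-shadowing of renormalizations together with the balance condition coming from the accelerated induction $\mathcal{R}$.
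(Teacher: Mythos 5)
Your proposal is correct, and it closes the argument by a genuinely different mechanism than the paper. The paper first uses Lemma~\ref{lm: conjisLandcont} to establish that $h$ is Lipschitz, hence differentiable Lebesgue-a.e.; it then fixes an arbitrary point $x_0$ of differentiability, writes $Dh(x_0)$ as the limit of $|h(J^n)|/|J^n|$ over tower pieces shrinking to $x_0$, applies the mean value theorem to introduce intermediate points $x_n,y_n$, uses the iterated cohomological identity together with affinity of $S$ and Lemma~\ref{lem: interval_bounds} to evaluate the limit as (a constant times) $e^{\psi(x_0)}$, and finally invokes the standard fact that a Lipschitz map whose a.e.\ derivative agrees with a continuous function is $C^1$. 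You instead \emph{integrate} the identity $Df^k = (DS^k\circ h)\,e^{\psi - \psi\circ f^k}$ over the base interval of the tower, exploiting that $DS^k$ is a constant there, which yields an exact formula for $|h(J^n(x))|/|J^n(x)|$ without any mean value theorem; you then show that ratio converges \emph{uniformly in $x$} to $Ce^{\psi(x)}$ and conclude by a Riemann-sum telescoping argument that $h(x) - h(0) = C\int_0^x e^{\psi}$, giving $C^1$ directly. What each buys: the paper's route is local and relies on the a.e.\ differentiability machinery that Lipschitz continuity already provides; yours is global and integral, sidesteps the a.e.\ differentiability step, but requires the uniform (rather than merely pointwise) convergence of the length ratios, which you correctly trace to the uniform shrinking of tower pieces and base intervals (again Lemma~\ref{lm: conjisLandcont}) and uniform continuity of $\psi$. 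Both rely on the same inputs: the cohomological equation, affinity of $S$, Lemma~\ref{lem: interval_bounds}(3) for $R_n\to R_\infty$, and the absence of wandering intervals. One small remark: your constant $C = R_\infty e^{-\psi(0)}$ is the one the formula actually produces; the paper's displayed $Ce^{\psi(0)}$ appears to carry a harmless sign typo, since the preceding line has $e^{-\psi\circ f^{q^n_{\alpha_n}-j_n}(x_n)}$.
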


\begin{proof}[Proof of Theorem \ref{thm: linearization}]
Theorem \ref{thm: linearization} is a direct consequence of Propositions \ref{prop: cohomological_equation} and \ref{prop: regular_conjugacy}.
\end{proof}

\subsection{Proof of Theorem \ref{thm: circle_thm}}
\label{sc:proof_circle}

The proof of Theorem \ref{thm: circle_thm} makes use of Theorems  \ref{thm: affine_shadow} and \ref{thm: linearization} but requires special considerations on the conjugating map. This is due to the fact that boundary-equivalence, which is a necessary condition for $C^1$ conjugacy of GIETs, does not imply break-equivalence, which is a necessary condition for $C^1$ conjugacy of circle homeomorphisms. 

Thus, although by Theorems \ref{thm: affine_shadow} and \ref{thm: linearization} any function $f$ in the statement of Theorem \ref{thm: circle_thm} will be $C^1$-conjugated, as GIET, to a $1$ dimensional family of AIETs parametrized by the log-slope vector (notice that for an Oseledet's generic IET $(\lambda, \pi)$ of rotation type the space $E^s(\lambda, \pi)$ has dimension $1$), it will be $C^1$ conjugated, as a circle homeomorphism, only to AIETs having a precise log-slope vector (a unique vector in the $1$-dimensional affine subspace of log-slopes given by Theorem \ref{thm: affine_shadow}). %unique AIET?. 
%A proof of this result is given at the end of Section \ref{sc: conjugacy}

\begin{proof}[Proof of Theorem \ref{thm: circle_thm}] 
Let $(\lambda, \pi)$ as in Theorem \ref{thm: rigidity}, with $\pi$ of rotation type, and let $f$, $g$ as in the statement of Theorem \ref{thm: circle_thm}. Denote by $F$ and $G$ the canonical identification of $f$ and $g$ as GIETs introduced in Section \ref{sc: p-homeomorphisms}, namely $F = \varphi \circ f \circ \varphi^{-1}$ and $G = \varphi \circ g \circ \varphi^{-1}$, with $\varphi: [0, 1] \to \T$ given by \eqref{eq: parametrization}.

By Theorems \ref{thm: affine_shadow}, \ref{thm: linearization} and  Lemma \ref{lem: same_model}, there exist $\omega  \in E^c(\tau,\lambda, \pi)$ and $L  \in \textup{Aff}(T, \omega)$ such that $F$ and $G$ are $C^1$-conjugated to $L$ as GIETs. Denote by $l$ the circle homeomorphism induced by $L$ with respect to the parametrization $\varphi$, namely, $l = \varphi^{-1} \circ L \circ \varphi$. 

Notice that, although $F$ and $G$ are $C^1$-conjugated to $L$ (in particular boundary-equivalent to $L$) the maps $f$ and $g$ are not necessarily break-equivalent to $l$. In fact, assuming WLOG that $\pi$ verifies \eqref{eq: rotation_type} for some $k \in \{0, \dots, d - 2\}$,  we have
\begin{gather*}
BP(l) \subseteq \{ \varphi(u_j(L)) \mid 0 \leq j < d\}, \\
BP(f) = \{ \varphi(u_j(F)) \mid 0 \leq j < d; \, j \neq d - k - 1\}, \\
BP(g) =  \{ \varphi(u_j(G)) \mid 0 \leq j < d; \, j \neq d - k - 1\}.
\end{gather*}
Note that the conjugacy between $f$ (resp. $g$) and $l$, which we obtain as the induced circle homeomorphism associated with the conjugacy between $F$ (resp. $G$) and $L$,  identifies the points $\varphi(u_j(F))$ (resp. $\varphi(u_j(G))$ with $\varphi(u_j(L))$, for all $0 \leq j \leq d.$ 
Since  $\varphi(u_{d - k - 1}(L))$ could be a break point for $l$, the map $l$ might not be break-equivalent to $f$ and $g$. 

However, it follows from the discussion in Section \ref{sc: p-homeomorphisms} that 
\begin{equation}
\label{eq: equality_jumps}
\sigma_f(\varphi(u_j(F))) =  \sigma_l(\varphi(u_j(L))) = \sigma_g(\varphi(u_j(G))),
\end{equation}
for $0  <  j < d$ with $j \neq d - k - 1$. In particular, the map $l$ has at least $d - 2$ break points, which are mapped by the conjugacy with the map $f$ (resp. $g$) to break points of $f$ (resp. $g$) with the same jump ratio. Thus, if $\varphi(u_{d - k - 1}(L))$ is not a break point for $l$ then the map $l$ is break-equivalent to $f$ and $g$. 

Indeed,  if $\varphi(u_{d - k - 1}(L))$ is not a break point for $l$, then $l$ would have at most $d - 1$ break points. Since $f$ (resp. $g$) has exactly $d - 1$ break points and $BP(l) \subseteq \{ \varphi(u_j(L)) \mid 0 \leq j < d;\,  j \neq d - k - 1\}$, it follows from \eqref{eq: product_jump_ratios}  and  \eqref{eq: equality_jumps} that $l$ has a break point at $\varphi(u_0(L))$ with jump ratio  $\sigma_f(\varphi(u_0(F)))$ (resp. $\sigma_g(\varphi(u_0(G)))$. Since the conjugacy between $f$ (resp. $g$) and $l$ is sending break points to break points,  these maps would be break-equivalent.

In general $\varphi(u_{d - k - 1}(L))$ will be a break point for $l$. However, we can show the following.

\begin{claim}
There exist a unique $v  \in E^s(\lambda, \pi)$ such that for any $L^* \in \textup{Aff}(T, \omega + v)$, we have $\varphi(u_{d - k - 1}(L^*)) \notin BP(l^*)$, where $l^* = \varphi^{-1} \circ L^* \circ \varphi$ denotes the circle homeomorphism induced by $L^*$.
\end{claim}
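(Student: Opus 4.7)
The plan is to translate the no-break condition at $\varphi(u_{d-k-1}(L^*))$ into a single linear equation on the log-slope vector $\omega+v$, and then to reduce the claim to a non-vanishing statement for a linear functional on the one-dimensional space $E^s(\lambda,\pi)$. Since $L^*\in\textup{Aff}(T,\omega+v)$ has constant derivative $e^{(\omega+v)_\gamma}$ on each interval $I_\gamma(L^*)$, the condition $\varphi(u_{d-k-1}(L^*))\notin BP(l^*)$ is precisely the continuity of $DL^*$ at the interior point $u_{d-k-1}(L^*)$, which reads $(\omega+v)_\alpha=(\omega+v)_\beta$ for $\alpha:=\pi_0^{-1}(d-k-1)$ and $\beta:=\pi_0^{-1}(d-k)$. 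Setting $\eta(w):=w_\alpha-w_\beta$, this becomes the scalar equation
\[
\eta(v)=\omega_\beta-\omega_\alpha,
\]
which depends only on $\omega+v$, and not on the particular $L^*$ realizing it.

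For Oseledets-generic $(\lambda,\pi)$ with $\pi$ of rotation type, the associated translation surface has genus one, so the non-trivial Lyapunov exponents of the Zorich height cocycle are carried by the two-dimensional quotient $\R^\A/\Kpi$, on which ${}^TB$ acts as the Kontsevich-Zorich cocycle with exponents $\pm\chi$. In particular $\dim E^s(\lambda,\pi)=1$, and as already exploited in the proof of Theorem \ref{thm: affine_shadow}, the restriction $\pi_{\Kpi}\mid_{E^{c}(\lambda,\pi)}\colon E^{c}(\lambda,\pi)\to\Kpi$ is a linear isomorphism, so the set of log-slopes in $E^{cs}(\lambda,\pi)$ with prescribed $\pi_{\Kpi}$-image is a translate of the one-dimensional space $E^s(\lambda,\pi)$. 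Consequently, existence and uniqueness of $v\in E^s$ solving $\eta(v)=\omega_\beta-\omega_\alpha$ both reduce to the single assertion that the linear functional $\eta\mid_{E^s(\lambda,\pi)}$ does not vanish identically.

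To verify this last transversality statement, I will use the standard duality between the Zorich height and length cocycles, from which one deduces that the annihilator $(E^s(\lambda,\pi))^\perp$ in $\R^\A$ coincides with $F^c(\lambda,\pi)\oplus F^s(\lambda,\pi)$, where $F^\bullet$ denotes the Oseledets splitting of the length cocycle $B^{-1}$. Thus $\eta\mid_{E^s}\equiv 0$ is equivalent to the vanishing of the $F^u$-component of the specific vector $e_\alpha-e_\beta$ in the splitting $\R^\A=F^s\oplus F^c\oplus F^u$. The main obstacle is then to rule out this degenerate situation on the Oseledets-generic set. I expect this to follow by combining an explicit computation near a periodic Rauzy-Veech orbit (where $F^u$ coincides with the top eigendirection of the associated periodic length-cocycle matrix and transversality with $e_\alpha-e_\beta$ can be verified by hand) with a recurrence argument based on continuous dependence of $F^u$ on Pesin sets and ergodicity of $\tilde{\mathcal Z}_{\textup{ext}}$, propagating the conclusion to a full-measure subset; shrinking the ambient full-measure set from Theorem \ref{thm: rigidity} to this subset then yields the claimed existence and uniqueness.
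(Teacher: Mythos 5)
Your reduction is sound and, up to a point, parallels the paper's: the no-break condition at $\varphi(u_{d-k-1}(L^*))$ depends only on the log-slope vector and is the single linear equation $(\omega+v)_{\pi_0^{-1}(d-k-1)}=(\omega+v)_{\pi_0^{-1}(d-k)}$, and since $\dim E^s(\lambda,\pi)=1$ for rotation type, existence and uniqueness of $v$ amount to the non-vanishing of $\eta(v)=v_{\pi_0^{-1}(d-k-1)}-v_{\pi_0^{-1}(d-k)}$ on $E^s(\lambda,\pi)$. The genuine gap is that you never prove this non-vanishing: you only ``expect'' it to follow from an explicit computation at a periodic Rauzy--Veech orbit combined with Pesin-set continuity and ergodicity of $\tilde{\mathcal Z}_{\textup{ext}}$. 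Besides being left entirely unexecuted, that scheme has a real obstruction: the degeneracy set $\{(\tau,\lambda,\pi): e_\alpha-e_\beta\in F^{cs}(\tau,\lambda,\pi)\}$ is not invariant under the renormalization (the fixed vector $e_\alpha-e_\beta$ is not equivariant under the length cocycle, unlike the spaces $F^{cs}$), so ergodicity does not upgrade a computation at one periodic orbit to a full-measure statement; you would need a genuinely different argument, and as written the decisive step of the claim is missing.

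The paper closes exactly this step by an explicit computation rather than a genericity argument. Using the duality of the height and length cocycles it identifies, for rotation-type $\pi$, the stable space as $E^s(\lambda,\pi)=\lambda^\bot\cap\textup{Ker}(\Omega_\pi)^\bot$, and then observes that this space consists precisely of vectors taking a constant value $a$ on the coordinates with $\pi_0(\alpha)\le d-k-1$ and a constant value $b$ on those with $\pi_0(\alpha)>d-k-1$, subject to $\langle v,\lambda\rangle=0$, i.e.\ $b=ta$ with $t=-\sum_{\pi_0(\alpha)\le d-k-1}\lambda_\alpha\big/\sum_{\pi_0(\alpha)>d-k-1}\lambda_\alpha<0$. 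On such vectors your functional equals $\eta(v)=a(1-t)$, which vanishes only for $v=0$; hence $\eta\mid_{E^s}$ is an isomorphism onto $\R$ and the unique $v$ solving the jump-ratio equation at $u_{d-k-1}$ exists, with no need to shrink the full-measure set or invoke Pesin theory. Incorporating this explicit description of $E^s(\lambda,\pi)$ (which you already use implicitly when asserting $\dim E^s=1$) would repair your argument.
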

\begin{proof}[Proof of the Claim]
Since $\pi$ is of rotation type, it follows from the duality of the heights and lengths cocycle (see \cite[Lemma 3.3]{zorich_deviation_1997}, and \cite[pages 384-385]{cobo_piece-wise_2002}) that  
\begin{equation}
\label{eq: stable_space}
E^s(\lambda, \pi) =  \lambda^\bot \cap \textup{Ker}(\Omega_\pi)^\bot.
\end{equation}
Moreover, since $\pi$ verifies \eqref{eq: rotation_type}, a simple calculation shows that
\[E^s(\lambda, \pi) =   \left\{ v \in \R^\A \left| \begin{array}{l}  \langle v, \lambda \rangle = 0,\\ v_\alpha = v_\beta \text{ if } \pi_0(\alpha), \pi_0(\beta) \leq d - k - 1, \\ v_\alpha = v_\beta \text{ if } \pi_0(\alpha), \pi_0(\beta) > d - k - 1. \end{array}  \right\}  \right..  \]
%Recall that for any $L^* \in \textup{Aff}(T, \omega + v)$, we have $BP(S^*) \subseteq \{ \varphi(u_j(S^*)) \mid 0 \leq j < d\}.$

 Let $v \in E^s(\lambda, \pi)$ and denote $ a = v_{\pi_0^{-1}(1)}$ or $b = v_{\pi_0^{-1}(d)}$. Notice that any coordinate of $v$ is either equal to $a$ or $b$. Since $\langle \lambda, v \rangle = 0$, we have
\[b = ta, \quad \text{ where } \quad t = -\frac{\sum_{\pi_0(\alpha) \leq d - k - 1} \lambda_\alpha }{\sum_{\pi_0(\alpha) >  d - k - 1} \lambda_\alpha }. \]
With these notations, it is easy to show that, for any $L^* \in \textup{Aff}(T, \omega + v)$,
\[ \sigma(\varphi(u_j(L^*))) = \left\{ \begin{array}{lcl} 
\sigma_l(\varphi(u_0(L))) \exp^{a(t - 1)} & \text{ if } & j = 0,\\
\sigma_l(\varphi(u_{d - k - 1}(L))) \exp^{a(1 - t)}  & \text{ if }  & j = d - k -1, \\
\sigma_l(\varphi(u_j(L))) & & \text{ otherwise.} 
 \end{array} \right. \]
Hence, there exists a unique $a \in \R$ (and thus a unique $v \in E^s(\lambda, \pi)$) such that $\sigma_L(\varphi(u_{d - k - 1}(L^*))) = 1$, for any $L^* \in \textup{Aff}(T, \omega + v)$. In particular, $\varphi(u_{d - k - 1}(L^*))$ is not a break point of $l^* = \varphi^{-1} \circ L^* \circ \varphi$.  %Notice that $S^*$ has exactly $d - 1$ break points since by hypothesis $ \sigma_S(\varphi(u_j(S))) =  \sigma_f(\varphi(u_j(S))) \neq 1$, for $j \neq d - k - 1.$
\end{proof}

Recall that $E^s(\lambda, \pi)$ is orthogonal to $\textup{Ker}(\Omega_\pi)$ by \eqref{eq: stable_space}. Hence, by definition of $\omega$, any AIET $L^*$ as in the previous claim fulfills the conditions in Theorem \ref{thm: affine_shadow} and therefore, by Theorem \ref{thm: linearization}, is $C^1$ conjugated to $F$ and $G$ as GIETs. As before, the induced circle homeomorphism $l^*$ is topologically conjugated to $f$ and $g$, and $d - 2$ of the respective jump ratios at the break points coincide. Since $\varphi(u_{d - k - 1}(L^*))$ is not a break point for $l^*$, it follows from the previous discussion that $l^*$ is break-equivalent to $f$ and $g$. Finally, by Propositions \ref{prop: cohomological_equation} and \ref{prop: regular_conjugacy}, $l^*$ is $C^1$-conjugated to $f$ and $g$ as circle homeomorphisms. 

%Notice that any such $S$ is also $C^1$-conjugated, as GIET, to $f$ and $g$ by Theorem \ref{thm: linearization}.
\end{proof}

%The remaining of this article is devoted to the proof of Propositions \ref{prop: bound_slopes}, \ref{prop: bound_intervals}, \ref{prop: bound_image_intervals}, \ref{prop: cohomological_equation} and \ref{prop: regular_conjugacy}. 

\section{Existence of an affine shadow} %- Proof of Propositions \ref{prop: bound_slopes}, \ref{prop: bound_intervals}, \ref{prop: bound_image_intervals}}
\label{sc: existenceofaffineshadow}

This section is devoted to the proof of Propositions \ref{prop: bound_slopes}, \ref{prop: bound_intervals}, and \ref{prop: bound_image_intervals}. 

\subsection{Proof of Proposition \ref{prop: bound_slopes}}
\label{sc: affine_shadow}
In the following, we fix $T = (\tau, \lambda, \pi) \in X$ as in Proposition \ref{prop: generic_condition} and $f$ as in Theorem \ref{thm: affine_shadow}.%, where $X$ denotes the domain of the acceleration $\mathcal{R}: X \to X$ defined in Section \ref{sc: defR}.

Let us start by showing that the sequence $(L_n)_{n \in \N}$ behaves as a pseudo-orbit with respect to the heights cocycle.
 
\begin{lemma}
\label{lem: pseudo-orbit}
Then  $|L^{n + 1} - \HC{n}{n + 1} L^n| = O(c^n)$, for some $0 < c < 1$.
\end{lemma}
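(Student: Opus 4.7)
The plan is to use the tower/return structure of Rauzy-Veech induction to decompose $f^{q^{n+1}_\alpha}$ into a composition of level-$n$ return maps, and then apply the a priori distortion bound of Corollary \ref{cor: exp_convergence} to each factor. Throughout, fix $(\tau,\lambda,\pi)\in X$ and $f$ as in Theorem \ref{thm: affine_shadow}, and set $g := \mathcal{R}^n f$, so that $g\mid_{I^n_\beta} = f^{q^n_\beta}$ for each $\beta\in\A$.

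By definition of the accelerated height cocycle, for every $x\in I^{n+1}_\alpha$ the orbit $x,\,g(x),\dots,g^{h-1}(x)$ visits each level-$n$ tower base $I^n_\beta$ exactly $(\HC{n}{n+1})_{\alpha\beta}$ times before returning to the level-$(n+1)$ domain, where $h=\sum_\beta(\HC{n}{n+1})_{\alpha\beta}$. Writing $\beta_i\in\A$ for the combinatorially determined index (constant on $I^{n+1}_\alpha$) with $g^i(x)\in I^n_{\beta_i}$, the chain rule gives
\[ Df^{q^{n+1}_\alpha}(x) \;=\; \prod_{i=0}^{h-1} Df^{q^n_{\beta_i}}\!\bigl(g^i(x)\bigr). \]

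Next, I would apply Corollary \ref{cor: exp_convergence} at level $n$: since $g^i(x)\in I^n_{\beta_i}$ and $Df^{q^n_{\beta_i}}$ has distortion $1+O(c^n)$ on $I^n_{\beta_i}$, each factor equals $e^{L^n_{\beta_i}}\bigl(1+O(c^n)\bigr)$. Taking logarithms of the product and rearranging via
\[ \sum_{i=0}^{h-1} L^n_{\beta_i} \;=\; \sum_\beta (\HC{n}{n+1})_{\alpha\beta}\,L^n_\beta \;=\; (\HC{n}{n+1} L^n)_\alpha, \]
one obtains $\log Df^{q^{n+1}_\alpha}(x) = (\HC{n}{n+1} L^n)_\alpha + O(hc^n)$. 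A second application of Corollary \ref{cor: exp_convergence}, now at level $n+1$, shows that $L^{n+1}_\alpha$ differs from $\log Df^{q^{n+1}_\alpha}(x)$ by $O(c^n)$ for any $x\in I^{n+1}_\alpha$, so altogether $L^{n+1}_\alpha-(\HC{n}{n+1} L^n)_\alpha = O(hc^n)$.

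The main obstacle is to absorb the factor $h$, which is a sum of entries of the single matrix $\HC{n}{n+1}$ of the accelerated cocycle and thus is not a priori uniformly bounded in $n$. This is precisely where the Roth-type hypothesis enters: combining parts (\ref{exp_growth}) and (\ref{Roth_type}) of Proposition \ref{prop: generic_condition}, for every $\varepsilon>0$ one obtains $\|\HC{n}{n+1}\|\le C_\varepsilon e^{\varepsilon n}$, hence $h\le d\,C_\varepsilon e^{\varepsilon n}$. Choosing $\varepsilon$ small enough that $\tilde c := c\,e^\varepsilon<1$ converts $O(hc^n)$ into $O(\tilde c^n)$ uniformly in $\alpha$ and concludes the proof. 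Conceptually, this is the one place where the typicality of $(\lambda,\pi)$ is indispensable: the exponential distortion decay must beat the subexponential blow-up of the chain-rule product.
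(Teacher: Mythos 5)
Your proposal is correct and follows essentially the same route as the paper: decompose $f^{q^{n+1}_\alpha}$ along the level-$n$ towers using the entries of $\HC{n}{n+1}$, bound each chain-rule factor against $e^{L^n_\beta}$ via the distortion estimate of Corollary \ref{cor: exp_convergence}, and absorb the subexponential factor $\|\HC{n}{n+1}\|$ using Conditions \ref{exp_growth} and \ref{Roth_type}. The only cosmetic difference is that the paper fixes points $x^n_\beta$ with $L^n_\beta=\ln Df^{q^n_\beta}(x^n_\beta)$ by the mean value theorem and compares values at those points, whereas you compare each factor directly to $e^{L^n_\beta}$, which is equivalent.
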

\begin{proof}% [Proof of Lemma \ref{lem: pseudo-orbit}]
For any $\alpha \in \A$ and $n \in \N$, let $x_\alpha^n \in I_\alpha^n(f)$ such that \[L^n_\alpha = \ln Df^{q^n_\alpha}(x_\alpha^n).\]
Let us fix $n \in \N$. Given $\alpha \in \A$, let
\[b_\alpha^n = \sum_{\beta \in \A} \left(\HC{n}{n+1}\right)_{\alpha \beta}.\]
For any $\alpha \in \A$, we can express $q^{n+1}_\alpha$ uniquely as
\[ q^{n+1}_\alpha = \sum_{i = 1}^{b_\alpha^n} q^{n}_{\delta_i(\alpha)}, \]
for some $\delta_i(\alpha) \in \A$, such that 
\begin{equation}
\label{eq: subtowers1}
 f^{h_i}(I^{n+1}_\alpha(f)) \subset I^n_{\delta_i(\alpha)}(f), \hskip0.5cm \text{ where } \hskip0.5cm  h_i = \sum_{j = 1}^{i - 1} q^{n}_{\delta_j(\alpha)},
\end{equation}
for $i = 1, \dots, b_\alpha^n$. Notice that 
\[ (\HC{n}{n+1}L^n)_\alpha = \sum_{\beta \in \A} \left(\HC{n}{n+1}\right)_{\alpha \beta}L^n_\beta,\]
for any $\alpha \in \A$, and
\[ \# \{1 \leq i \leq b_\alpha^n \mid \delta_i(\alpha) = \beta\}  =  \left(\HC{n}{n+1}\right)_{\alpha \beta},\]
for any $\alpha, \beta \in \A$. A simple calculation shows that
\[  (L^{n+1} - \HC{n}{n+1}L^n)_\alpha = \sum_{\beta \in \A} \sum_{\delta_i(\alpha) = \beta} \ln \dfrac{Df^{q^n_\beta}(f^{h_i}(x^{n+1}_\alpha))}{Df^{q^n_\beta}(x^n_\beta)},\]
for any $\alpha \in \A$.   By (\ref{eq: subtowers1}) and Corollary \ref{cor: exp_convergence}, the previous equation yields to
\[\frac{|L^{n+1} - \HC{n}{n+1}L^n|}{\| \HC{n}{n+1}\|} = O(c^n),\]
for some $0 < c < 1$. The claim now follows from Condition \ref{Roth_type} and the fact that $\| \HC{0}{n}\|$ grows exponentially.
\end{proof}

We shall see that this pseudo-orbit can be shadowed by the iterates of a vector with respect to the height cocycle. For any $n \geq 0$, let us decompose $L^n$ with respect to the Oseledet's splitting at $(\tau^n, \lambda^n, \pi^n)$
%\[E^n_s(\tau, \pi, \lambda) \oplus E^n_c(\tau, \pi, \lambda) \oplus E^n_u(\tau, \pi, \lambda) = \R^\A\] 
as 
\[L^n = L^{n, s} + L^{n, c} + L^{n, u},\]
and define
\[ v_n = \HC{0}{n}^{-1} L^{n,c}.\]
\begin{lemma}
There exists $\omega \in E^c(\tau, \lambda, \pi)$ such that $\lim_{n \to \infty}v_n = \omega$. Moreover 
\[|\omega - v_n| = O(c^n),\] 
for some $0 < c < 1$.
\end{lemma}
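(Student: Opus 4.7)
The plan is to show that $(v_n)_{n \geq 0}$ is a Cauchy sequence in $E^c(\tau, \lambda, \pi)$ with exponentially small increments $|v_{n+1} - v_n| = O(c^n)$, from which the conclusion follows by summability of a geometric series.

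First, I would transfer the pseudo-orbit relation of Lemma \ref{lem: pseudo-orbit} to the central subspace. Denoting by $\pi^c_n$ the projection onto $E^c(\tau^n, \lambda^n, \pi^n)$ parallel to $E^s(\tau^n, \lambda^n, \pi^n) \oplus E^u(\tau^n, \lambda^n, \pi^n)$ and using that the Oseledets splitting is invariant under the cocycle (so $\HC{n}{n+1}$ maps $E^c(\tau^n, \lambda^n, \pi^n)$ isomorphically to $E^c(\tau^{n+1}, \lambda^{n+1}, \pi^{n+1})$ and hence commutes with the projection), we obtain
\[ L^{n+1,c} = \pi^c_{n+1}(\HC{n}{n+1} L^n + O(c^n)) = \HC{n}{n+1} L^{n,c} + E_n,\]
where $E_n \in E^c(\tau^{n+1}, \lambda^{n+1}, \pi^{n+1})$. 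Since $\|\pi^c_{n+1}\|$ grows at most sub-exponentially by the Oseledets multiplicative ergodic theorem, one has $|E_n| = O((c')^n)$ for any $c' \in (c, 1)$.

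Second, I would compute directly
\[ v_{n+1} - v_n = (\HC{0}{n+1})^{-1} L^{n+1,c} - (\HC{0}{n+1})^{-1} \HC{n}{n+1} L^{n,c} = (\HC{0}{n+1})^{-1} E_n.\]
By Condition \ref{bounded_central} in Proposition \ref{prop: generic_condition} and the fact that all Lyapunov exponents of ${}^TQ$ on the central subspace are zero (so that the inverse restricted to the central subspace also has at most sub-exponentially growing norm), we obtain $\|(\HC{0}{n+1})^{-1}|_{E^c(\tau^{n+1},\lambda^{n+1},\pi^{n+1})}\| = O(e^{\varepsilon n})$ for any $\varepsilon > 0$. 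Choosing $\varepsilon$ small enough so that $c'' := c'e^{\varepsilon} < 1$ yields $|v_{n+1} - v_n| = O((c'')^n)$.

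Third, the summability of $\sum (c'')^n$ shows that $(v_n)$ is Cauchy; its limit $\omega$ lies in $E^c(\tau,\lambda,\pi)$ because this finite-dimensional subspace is closed and $v_n \in E^c(\tau,\lambda,\pi)$ for every $n$ (by invariance of the Oseledets splitting). The tail estimate
\[ |\omega - v_n| \leq \sum_{k \geq n} |v_{k+1} - v_k| = O((c'')^n)\]
gives the exponential rate of convergence, after possibly replacing $c''$ by a slightly larger constant in $(0,1)$.

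The main obstacle is step two: controlling the inverse cocycle on the central space. Condition \ref{bounded_central} as stated bounds only the forward operator norm of $\HC{0}{n}|_{E^c}$, so one must argue separately that on the central subspace the inverse also has sub-exponential norm. This follows from the fact that the extended Zorich renormalization $\tilde{\mathcal Z}_{\textup{ext}}$ is invertible and the Oseledets theorem applied in both time directions forces the Lyapunov exponents on $E^c$ to vanish on both sides, so $\frac{1}{n}\log\|(\HC{0}{n})^{-1}|_{E^c(\tau^n,\lambda^n,\pi^n)}\| \to 0$. This sub-exponential control on the inverse, combined with the exponentially small error $E_n$, is precisely what closes the argument.
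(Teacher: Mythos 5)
Your argument is correct and follows essentially the same route as the paper: both rest on the identity $v_{n+1}-v_n = \HC{0}{n+1}^{-1}\left(L^{n+1,c}-\HC{n}{n+1}L^{n,c}\right)$, the pseudo-orbit estimate of Lemma \ref{lem: pseudo-orbit}, control of the cocycle on the central space, and summation of a geometric series. The only difference is cosmetic: where the paper invokes Condition \ref{bounded_central} to bound $\bigl\|\HC{0}{n+1}^{-1}\mid_{E^c}\bigr\|$ (and implicitly the projections onto the central components), you substitute sub-exponential bounds from the Oseledets theorem in both time directions together with the sublinearity of the acceleration times, which is a slightly weaker input that still closes the estimate.
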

\begin{proof}We have
\begin{align*}
|v_{n + 1} - v_n|& = \left| \HC{0}{n + 1}^{-1} L^{n + 1, c} - \HC{0}{n}^{-1}L^{n, c}\right| \\
& = |\HC{0}{n + 1}^{-1}\left(L^{n + 1, c} - \HC{n}{n + 1} L^{n, c}\right)| \\
& \leq \left\| \HC{0}{n + 1}^{-1} \mid_{E^n_c}\right\| \left|L^{n + 1, c} - \HC{n}{n + 1} L^{n, c}\right| \\
& = O(c^n),
\end{align*}
where the last equality follows from Condition \ref{bounded_central} and Lemma \ref{lem: pseudo-orbit}.
\end{proof}

\begin{proof}[Proof of Proposition  \ref{prop: bound_slopes}] 
By the EC Condition, we have
%\cite[Lemmas 4.6.1 and 4.6.2]{ghazouani_priori_2023}
\begin{equation}\label{eq: stab+unstab} |L^{n, s}|, |L^{n, u}| = O(c^n),\end{equation}
for some $0 < c < 1$.  
Moreover, by Condition \ref{bounded_central} and the previous lemmas,
\begin{align*}
|\omega^{n} - L^{n, c}|& = \left| \HC{0}{n} \omega - L^{n, c}\right| \\
& = \left| \HC{0}{n}(\omega - \HC{0}{n}^{-1}L^{n, c})\right| \\
& \leq \left \| \HC{0}{n}\mid_{E^c} \right\| |\omega - v_{n}|\\
& = O(c^n),
\end{align*}
for some $0 < c < 1$ which, together with \eqref{eq: stab+unstab} finishes the proof of Proposition \ref{prop: bound_slopes}.
\end{proof}

\subsection{Proof of Proposition \ref{prop: bound_intervals}}
%Throughout this section we assume that $f\in C^3$ satisfies the assumptions of Theorem \ref{thm: affine_shadow}. We prove here the result which states that the normalized vectors obtained by acting via suitable Rauzy-Veech-Zorich algorithm on $f$ and $S$ are aribtrarily close. 
Let us start by introducing a matrix associated with the action of the Zorich algorithm on the slope vectors of AIETs. This matrix will allow us to describe the change in the length vector of AIETs under the action of the Zorich algorithm. %For the sake of simplicity, we define this matrix for our particular choice of $(\lambda, \pi)$ and for the vector $\omega$ given by Proposition \ref{prop: bound_slopes}, but this can, of course, be defined in greater generality.

%Let $\omega$ and $\omega^n$ be as in Proposition \ref{prop: bound_slopes}.  
Given an IET $T = (\lambda, \pi)$ and $\omega \in \R^\A$, we associate the matrix $U(\lambda, \pi, \omega): \R^\A \to \R^A$ given by
 \[
 (U(\lambda, \pi, \omega))_{\alpha,\beta}=\begin{cases}
1, & \text{ if }\alpha=\beta \text{ and }\beta\neq\pi_{\epsilon}^{-1}(d),\\
\exp(\epsilon\cdot \omega_{\pi^{-1}_{\epsilon}(d)})& \text{ if }\alpha=\beta=\pi_{(1-\epsilon)}^{-1}(d),\\
\exp((1-\epsilon)\cdot  \omega_{\pi^{-1}_{(1-\epsilon)}(d)})& \text{ if }\alpha=\pi_{\epsilon}^{-1}(d)\text{ and }\beta=\pi_{(1-\epsilon)}^{-1}(d),\\
0& \text{ otherwise,}
\end{cases}
\]
where $\epsilon=\epsilon(\lambda,\pi)$ denotes the type of the IET defined by $(\lambda,\pi)$. 

Given $n\in\N$, let 
\[
V_n(\lambda, \pi, \omega):=\prod_{i=k(n)}^{k(n+1)-1} U(\mathcal{RV}^i(\lambda,\pi),A^i(\lambda,\pi)\omega).
\]
%\[
%V_n=U({\mathcal{R}^n(\lambda,\pi,\omega) })\cdot U(\mathcal {RV}({\mathcal{R}^n(\lambda,\pi,\omega) }))\cdot\ldots\cdot\cdot U(\mathcal {RV}^{k(n+1)-k(n)-1}({\mathcal{R}^n(\lambda,\pi,\omega) })),
%\] 
where $k(n)$ is such that $\mathcal{RV}^{k(n)}(\lambda,\pi,\omega)=\mathcal R^n(\lambda,\pi,\omega) $. The matrix $V_n$ allows us to describe the change of lengths for AIETs that have log-slope $\omega$ and follow the same finite path in the Rauzy graph as $(\lambda,\pi)$. More precisely, if $S \in \textup{Aff}(T, \omega)$, it follows that 
\begin{equation}\label{eq: mataffine}
\left(|I^n_{\alpha}(S)|\right)_{\alpha\in\A} =V_n \left(|I^{n+1}_\alpha(S)|\right)_{\alpha\in\mathcal A}.
\end{equation}
In particular, for the normalized length vector, we have
\[
\big(|\tilde I^n_{\alpha}(S)|\big)_{\alpha\in\A} =V_n \big(|\tilde I^{n+1}_\alpha(S)|\big)_{\alpha\in\mathcal A}.
\]
For the sake of simplicity we will denote the vectors $\left(|I^n_{\alpha}(S)|\right)_{\alpha\in\A} $ and $\left(|I^n_{\alpha}(f)|\right)_{\alpha\in\A} $ as $I^n(S) $ and $I^n(f)$ respectively. The analogous abuse of notation applies to the normalized vectors as well.

For the remainder of this section, we fix $T = (\tau, \lambda, \pi) \in X$ as in Proposition \ref{prop: generic_condition}, $f$ as in Theorem \ref{thm: affine_shadow}, $\omega$ as in Proposition \ref{prop: bound_slopes} and let $S \in \textup{Aff}(T, \omega)$.

\begin{lemma}
\label{lem: compact}
There exists a compact set $K \subset \Delta_{\A}$ such that $I^n(f), I^n(S) \in K$, for any $n \geq 1$. In particular, there exists  a constant $C_K > 0,$ depending only on $K$, such that 
\begin{equation}
\label{eq: balanced_base_renorm}
\sup_{n \geq 1} \max_{\alpha, \beta \in \A} \max\left\{ \frac{|\tilde I_\alpha^n(f)|}{|\tilde I_\beta^n(f)|}, \frac{|\tilde I_\alpha^n(S)|}{|\tilde I_\beta^n(S)|}\right\} < C_K. 
\end{equation}
\end{lemma}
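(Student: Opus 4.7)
The plan is to establish compactness of $\{\tilde\lambda^n\}$, $\{\tilde I^n(f)\}$, and $\{\tilde I^n(S)\}$ in $\Delta_\A$ by reducing, in each case, to the strict positivity of the Rauzy--Veech matrix $A^\gamma$ given by Condition \ref{balanced_base} of Proposition \ref{prop: generic_condition}, combined with bounded distortion from the EC Condition and bounded slopes from Proposition \ref{prop: bound_slopes}. For $\tilde\lambda^n$ the argument is immediate: writing $\lambda^n = A^\gamma v_n$ for some $v_n \in \R^\A_+$, strict positivity of $A^\gamma$ gives
\[ \min_{\alpha, \beta} \frac{\lambda^n_\alpha}{\lambda^n_\beta} \ge \frac{\min_{\alpha, \beta} A^\gamma_{\alpha \beta}}{\max_{\alpha, \beta} A^\gamma_{\alpha \beta}} > 0 \]
uniformly in $n$, so $\tilde\lambda^n$ lies in a compact subset $K_0 \subset \Delta_\A$.

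For $\tilde I^n(f)$, I would apply the length relation \eqref{lengthrelation} to $\mathcal{R}^n f$ along the finite path $\gamma$, expressing $|I^n(f)|$ as a linear image of $|I^{n+|\gamma|}(f)|$ under the matrix $A^{(\gamma)}(f_n)$ defined by \eqref{matrixf}. By \eqref{matrixS}, each nonzero entry of $A^{(\gamma)}(f_n)$ is a sum of exactly $A^\gamma_{\alpha\beta}$ positive derivatives $Df^{m_i}$ evaluated at points of an induction interval at level $n+|\gamma|$, where $m_i$ is bounded by the corresponding full return time. Corollary \ref{cor: exp_convergence} gives bounded distortion $1+O(c^n)$ for the full-tower derivatives, and by a chain-rule argument the same bound passes to the partial derivatives $Df^{m_i}$. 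Hence the entries of $A^{(\gamma)}(f_n)$ have magnitudes of order $A^\gamma_{\alpha\beta}\, e^{L^n_\alpha}\,(1+O(c^n))$, where $L^n$ is given by \eqref{eq: avg_logslope}. By Proposition \ref{prop: bound_slopes}, $|L^n - \HC{0}{n}\omega| = O(c^n)$, and by Condition \ref{bounded_central} the norm of $\HC{0}{n}$ restricted to $E^c(\tau,\lambda,\pi)$ is uniformly bounded. Since $\omega \in E^c(\tau,\lambda,\pi)$, the components of $L^n$ are bounded above and below independently of $n$, so $A^{(\gamma)}(f_n)$ is positive with ratios of entries uniformly bounded. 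The same reasoning as for $\tilde\lambda^n$ then yields a compact set $K_f \subset \Delta_\A$ containing all $\tilde I^n(f)$.

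For $\tilde I^n(S)$, I would proceed analogously via \eqref{eq: mataffine}: the matrix $V_n$ has the same nonzero pattern as $A^\gamma$, with entries perturbed by factors of the form $\exp(\pm\,\omega^k_{\pi^{-1}_\epsilon(d)})$ picked up along the intermediate Rauzy--Veech steps. Each intermediate slope vector $\omega^k = \HC{0}{k}\omega$ is uniformly bounded by Condition \ref{bounded_central}, so these exponential factors are bounded above and below by constants independent of $n$. Thus $V_n$ remains positive with uniformly bounded entry ratios, giving a compact $K_S \subset \Delta_\A$ containing all $\tilde I^n(S)$. Taking $K = K_0 \cup K_f \cup K_S$ proves the compactness claim, and \eqref{eq: balanced_base_renorm} follows from compactness of $K$ in the open simplex.

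The main technical obstacle I foresee is the passage from the full-tower distortion estimate of Corollary \ref{cor: exp_convergence} to the partial-tower bound needed for the return times $m_i \leq q^n_\alpha$ appearing in \eqref{matrixS}. This should be handled by identifying each piece $f^{m_i}(I^{n+|\gamma|}_\alpha(f))$ inside a level-$n$ tower and applying the chain rule to bound its distortion by that of the full tower, combined with the uniform boundedness of $L^n$.
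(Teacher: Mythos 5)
Your treatment of $\tilde I^n(f)$ is essentially the paper's own argument: the paper likewise writes $\tilde I^n(f)\in \widetilde{A^{|\gamma|}}(\mathcal R^n f)(\Delta_{\A})$ via \eqref{lengthrelation} and bounds the entries of $A^{|\gamma|}(\mathcal R^n f)$ from above and below using Corollary \ref{cor: exp_convergence}, Proposition \ref{prop: bound_slopes} and Condition \ref{bounded_central}. The ``partial-tower'' obstacle you flag is not really there: every return time $m_i$ of an interval of $\mathcal{RV}^{|\gamma|}(\mathcal R^n f)$ to an interval $I^n_\beta(f)$ is a sum of a bounded number (depending only on $\gamma$) of full level-$n$ heights $q^n_\delta$, so the chain rule only ever produces full-tower factors $Df^{q^n_\delta}=e^{L^n_\delta}(1+O(c^n))$; also, the magnitude of an entry is a sum of products of such factors along the orbit rather than $A^{\gamma}_{\alpha\beta}e^{L^n_\alpha}$, but since $L^n$ is uniformly bounded this imprecision is harmless.

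The genuine gap is in the $S$-part. The matrix $V_n$ of \eqref{eq: mataffine} is the product of the twisted elementary matrices over \emph{all} Rauzy--Veech steps between the accelerated times $k(n)$ and $k(n+1)$, and the number of these steps is not bounded in $n$ (only its growth is controlled, via Conditions \ref{sublinear_times} and \ref{Roth_type}). By \eqref{eq: twisted_cocycle_coefficients}, an entry $(V_n)_{\alpha\beta}$ is a sum of $\big(\HCT{n}{n+1}\big)_{\alpha\beta}$ (not $A^{\gamma}_{\alpha\beta}$) products of up to $\big\|\HCT{n}{n+1}\big\|$ slope factors $e^{\omega^n_\delta}$; even though each individual factor is bounded by Condition \ref{bounded_central}, the entries of $V_n$ and the ratios between entries in different columns need not be uniformly bounded in $n$. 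So the assertion that $V_n$ is ``positive with uniformly bounded entry ratios'' fails as stated, and compactness of $\{\tilde I^n(S)\}$ does not follow from it. The repair is immediate and is what the paper does: treat $S$ exactly as you treated $f$, i.e.\ use $\tilde I^n(S)\in\widetilde{A^{|\gamma|}}(\mathcal R^n S)(\Delta_{\A})$, whose entries are sums of a bounded number of products of a bounded number of slopes $e^{\omega^n_\delta}$ and hence are uniformly bounded above and below; alternatively, factor $V_n=A^{|\gamma|}(\mathcal R^n S)\cdot B$ with $B$ nonnegative, as in Lemma \ref{lem: contraction}, and note that the positive, uniformly bounded left factor already confines the normalized image of the positive cone to a fixed compact subset of $\Delta_{\A}$.
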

\begin{proof}
Let $D > 0$ and define
\[ K = \overline{\bigcup_{D^{-1}<A_{\alpha\beta}< D} \tilde{A}(\Delta_{\A})}, \]
where the union is taken over all $d \times d$ positive matrices $A$ whose coefficients $A_{\alpha\beta}$ are uniformly bounded from above and below by $D$ and $D^{-1}$ respectively. It is not difficult to check that $K$ is a compact subset of $\Delta_{\A}$ (see \cite{viana_ergodic_2006}), and that there exists a constant $C_K > 0$ such that 
\[\max_{\alpha, \beta \in \A} \frac{w_\alpha}{w_\beta} < C_K, \]
for any $w\in K$. Thus it is sufficient to show that $\tilde I^n(f), \tilde I^n(S)\in K$, for any $n \geq 1$ if $D$ is sufficiently big.
% We will show that $I_n(f), I_n(S) \in K$, for any $n \geq 1$, if $C$ is sufficiently big.Let $A_{\gamma}({\mathcal R^nf})=A^{|\gamma|}({\mathcal R^nf})$

%Let $A_\gamma$ be a matrix given by condition \ref{balanced_base} Conisder a $d\times d$ matrix  $A_\gamma({\mathcal R^nf})$  defined as follows
%\[
%(A_\gamma({\mathcal R^nf}))_{\alpha\beta}:=\sum_{i=1}^{(A_\gamma)_{\alpha\beta}}\frac{|f^{h_i}(I_\beta^{n+1})|}{|I_\beta^{n+1}|}.
%\]
%Note that by Mean Value Theorem we have
%\[
%(A_\gamma({\mathcal R^nf}))_{\alpha\beta}:=\sum_{i=1}^{(A_\gamma)_{\alpha\beta}}Df^{h_i}(x_{\beta,i}^{n+1})
%\]
%for some points $x^{n+1}_{\beta,i}\in I_\beta^{n+1}$.
%Consider also a matrix $A_\gamma({\mathcal R^nS})$ such that
%\[
%(A_\gamma({\mathcal R^nS}))_{\alpha\beta}:=\sum_{i=1}^{(A_\gamma)_{\alpha\beta}}\frac{|S^{h_i}(I_\beta^{n+1})|}{|I_\beta^{n+1}|}.
%\]
%Note that we also have
%\[
%(A_\gamma({\mathcal R^nf}))_{\alpha\beta}:=\sum_{i=1}^{(A_\gamma)_{\alpha\beta}}DS^{h_i}(x_{\beta,i}^{n+1}).
%\]
%{\mathcal R^nf}
By \eqref{lengthrelation} we have
\[\tilde I^n(f)\in \widetilde{{A}^{|\gamma|}} (\mathcal R^n f)(\Delta_{\A}), \hskip1cm \tilde I^n(S)\in \widetilde{{A}^{|\gamma|}}({\mathcal R^nS}) (\Delta_{\A}),\]
for any $n \geq 0$. 
Since the log-slope of $S$ belongs to $E^c(\tau,\lambda, \pi)$, by Condition \ref{bounded_central}  there exists a constant $C>0$ such that 
\[C^{-1}<\| D{\mathcal R^nS} \mid_{I^n_\alpha(S)}\| <C \quad\text{for every }n\in\N \text{ and }\alpha\in\mathcal A.\]  
Thus, by Proposition \ref{prop: bound_slopes} and Corollary \ref{cor: exp_convergence}, by enlarging $C$ if necessary, we get
\begin{equation}\label{eq:boundedderivative}
C^{-1}<\| D{\mathcal R^nf} \mid_{I^n_\alpha(f)}\| <C \quad\text{for every }n\in\N \text{ and }\alpha\in\mathcal A.
\end{equation}  
Hence there exists a constant $\tilde C>0$ such that
\[\tilde C^{-1}<A^{|\gamma|}_{\alpha\beta}({\mathcal R^nf}),A^{|\gamma|}_{\alpha\beta}({\mathcal R^nS})<\tilde C \quad\text{for every }n\in\N \text{ and }\alpha,\beta\in\mathcal A.\]
Therefore $\tilde I^n(f), \tilde I^n(S) \in K$, for any $n \geq 1$, if $D$ is sufficiently big.
\end{proof}

\begin{lemma}
\label{lem: lengths_pseudo_orbit}
	For every $n\in\N$ we have 
            \[ \frac{1}{|I^{n}(f)|}\left| I^n(f) - V_n I^{n + 1}(f) \right| = O(c^n),\]
	for some $0<c<1$.
\end{lemma}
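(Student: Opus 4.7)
The strategy is to express the exact length-vector update of the GIET $f$ from level $n+1$ to level $n$ by a matrix $V_n^f$ of the same sparsity pattern as $V_n$ but with GIET-specific entries, and then show that $V_n^f$ is $O(c^n)$-close to $V_n$ entry-wise. To set this up, let $f_i := \mathcal{RV}^i(f)$, and for each $k(n) \le i < k(n+1)$ let $U^{i,f}$ have the same zero pattern as $U^i := U(\mathcal{RV}^i(\lambda,\pi), A^i\omega)$, but with its single nontrivial off-diagonal entry (an exponential of a coordinate of $A^i\omega$ in $U^i$) replaced by the actual image-to-preimage length ratio of $f_i$ on the relevant level-$i$ interval. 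By the very definition of a Rauzy-Veech step this gives the exact identity $I^n(f) = V_n^f \, I^{n+1}(f)$, so that
\[
I^n(f) - V_n I^{n+1}(f) = (V_n^f - V_n)\, I^{n+1}(f).
\]

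To show $\|V_n^f - V_n\| = O(c^n)$, observe that each level-$i$ interval appearing in a factor $U^{i,f}$ is a finite $f$-tower of level-$(n+1)$ subintervals, so applying Corollary \ref{cor: exp_convergence} at level $n+1$ the derivative on it is nearly constant with multiplicative error $1+O(c^n)$, and its almost-constant value coincides with the exponential of the corresponding coordinate of $L^{n+1}$ transported to level $i$ by the height cocycle. Combining this with Proposition \ref{prop: bound_slopes}, which gives $|L^{n+1}-\omega^{n+1}|=O(c^n)$, and with the subexponential growth of the height cocycle guaranteed by Condition \ref{Roth_type} of Proposition \ref{prop: generic_condition}, one concludes that each entry of $U^{i,f}-U^i$ is $O(c^n)$, uniformly in $i$. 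Expanding the difference telescopically,
\[
V_n^f - V_n = \sum_{j=k(n)}^{k(n+1)-1}\Bigl(\prod_{i=k(n)}^{j-1}U^{i,f}\Bigr)(U^{j,f}-U^j)\Bigl(\prod_{i=j+1}^{k(n+1)-1}U^i\Bigr),
\]
and controlling the partial products via Conditions \ref{sublinear_times} and \ref{Roth_type} of Proposition \ref{prop: generic_condition} yields $\|V_n^f - V_n\| = O((c')^n)$ for some $0 < c' < 1$. Finally, Lemma \ref{lem: compact} provides a uniform bound on $|I^{n+1}(f)|/|I^n(f)|$, and the claim follows.

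The main delicate point is the second step: the derivative control of Corollary \ref{cor: exp_convergence} (which is natural at the Zorich checkpoints) must be transferred to each intermediate Rauzy-Veech level $i$, and the almost-constant value of $Df_i$ on a level-$i$ interval has to be identified with the appropriate coordinate of a height-cocycle iterate of $L^{n+1}$ and then compared with the corresponding coordinate of $A^i\omega$. Keeping the Rauzy-Veech and Zorich parametrizations aligned, and verifying that the subexponential Roth-type growth of the cocycle does not wash out the geometric decay $c^n$ in the product expansion, is where the technical care will be needed.
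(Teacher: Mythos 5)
Your overall strategy (write the exact length transfer of $f$ as a matrix $V_n^f$ of the same shape as $V_n$ and compare it with $V_n$ using Corollary \ref{cor: exp_convergence}, Proposition \ref{prop: bound_slopes} and Condition \ref{Roth_type}) is the same comparison the paper performs, only organized as a factor-by-factor telescoping over the intermediate Rauzy--Veech steps instead of the block-level entry formula \eqref{eq: twisted_cocycle_coefficients}. The gap is in the quantitative middle step: you claim that each entry of $U^{i,f}-U^{i}$ is $O(c^n)$ and that the partial products can be controlled in norm, so that $\|V_n^f-V_n\|=O((c')^n)$. What the EC Condition and Proposition \ref{prop: bound_slopes} actually give is \emph{multiplicative} closeness of the slope entries: the ratio of the corresponding entries is $1+O((ce^{\tau})^n)$. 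To pass to an absolute bound you would also need the entries themselves (and the partial products) to be bounded, but Condition \ref{bounded_central} controls the transported central vector only at the checkpoint times of $\mathcal R$; at an intermediate RV time $i\in[k(n),k(n+1))$ the transported log-slope is a partial sum of $\omega^n$ over up to $\|\HCT{n}{n+1}\|\le Ce^{\tau n}$ legs, so an entry of $U^{i}$ is a priori only bounded by $\exp(KCe^{\tau n})$, and likewise $\|V_n\|$ is only bounded by $|I^n(S)|/\min_\alpha|I^{n+1}_\alpha(S)|$, which none of Conditions \ref{sublinear_times}--\ref{Roth_type} control. Neither the absolute entry bound nor the norm bound on the partial products is therefore available, and Lemma \ref{lem: compact} does not supply the bound on $|I^{n+1}(f)|/|I^n(f)|$ you invoke at the end (it controls the balance of coordinates within one level; the ratio across levels is in any case trivially at most $1$, so that last point is minor).

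This is exactly the difficulty the paper's proof is built to avoid: in Lemma \ref{lem: aux_lemma} each intermediate slope $\exp\bigl(\sum_{j<m_i(\alpha,\beta)}\omega^n_{\delta_j(\alpha)}\bigr)$ is never estimated in isolation but always paired with the length of the corresponding floor, via $|I^{n+1}_\alpha(S)|\exp\bigl(\sum_{j<m_i}\omega^n_{\delta_j}\bigr)=|S^{h}(I^{n+1}_\alpha(S))|\le|I^n(S)|$ and the Claim bounding $\frac{|I^{n+1}_\alpha(f)|}{|I^n(f)|}\frac{|I^n(S)|}{|I^{n+1}_\alpha(S)|}$ by $C\|\HCT{n}{n+1}\|$, so only relative quantities ever appear. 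Your telescoping can be repaired in the same spirit: prove the entry-wise multiplicative domination $|U^{j,f}-U^{j}|\le\epsilon_j\,U^{j,f}$ (entrywise, with $\epsilon_j=O((ce^{\tau})^n)$ uniformly over the $\le Ce^{\tau n}$ indices $j$ in the block), apply the telescoped sum directly to the positive vector $I^{n+1}(f)$, dominate the mismatched tails $\prod_{i>j}U^{i}$ by $\prod_{i>j}(1+\epsilon_i)U^{i,f}$, and conclude that the error is at most $\bigl(\sum_j\epsilon_j\bigr)\prod_i(1+\epsilon_i)\,I^n(f)$ coordinatewise, which after dividing by $|I^n(f)|$ gives the statement. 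In other words, the factor-by-factor route works, but only if you stay with relative errors against the length vectors throughout; the absolute matrix-norm claims as written are not justified by the cited conditions.
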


\begin{proof}
Fix $n \geq 1$. 
We use the notations from the proof of Lemma \ref{lem: pseudo-orbit}. Namely,  for any $\alpha \in \A$, we denote
\[b_\alpha^n = \sum_{\beta \in \A} \left(\HC{n}{n+1}\right)_{\alpha \beta},\]
and express $q^{n+1}_\alpha$ uniquely as
\[ q^{n+1}_\alpha = \sum_{i = 1}^{b_\alpha^n} q^{n}_{\delta_i(\alpha)}, \]
for some $\delta_i(\alpha) \in \A$, such that 
\begin{equation*}
\label{eq: subtowers}
 f^{h_i(\alpha)}(I^{n+1}_\alpha(f)) \subset I^n_{\delta_i(\alpha)}(f), \hskip0.5cm \text{ where } \hskip0.5cm  h_i (\alpha)= \sum_{j = 0}^{i - 1} q^{n}_{\delta_j(\alpha)},
\end{equation*}
for $i = 1, \dots, b_\alpha^n$. Note that $f_{n+1}^i|_{I^{n+1}_\alpha}=f^{h_i^n(\alpha)}|_{I^{n+1}_\alpha}$. We define $\delta_0(\alpha) = 0 = h_0(\alpha),$ for any $\alpha \in \A$. Notice that 
\[ \# \{1 \leq i \leq b_\alpha^n \mid \delta_i(\alpha) = \beta\}  =  \left(\HCT{n}{n+1}\right)_{\alpha \beta},\]
for any $\alpha, \beta \in \A$. For the sake of simplicity, we will denote $\big(\HCT{n}{n+1}\big)_{\alpha \beta}$ simply by $Q_{\alpha\beta}$.

%For any $\alpha \in \A$ and any $1 \leq i \leq b_\alpha^n$, let $x^n_\alpha(i) \in I_\alpha^{n + 1}$ such that 
%\[\big|f^{h_i}(I_\alpha^{n + 1}(f))\big| = \big|I_\alpha^{n + 1}(f)\big| Df^{h_i}(x^n_\alpha(i)).\]
%For any $\alpha, \beta \in \A$, let 
%\[1 \leq m_1(\alpha, \beta) < \dots <m_{Q_{\alpha\beta}}(\alpha, \beta) \leq b_\alpha^n\]
%such that $\delta_{m_i(\alpha, \beta)}(\alpha) = \beta$, for $i = 1, \dots, Q_{\alpha\beta}$. 
%Fix $n\ge 1$ and let $Q=Q_{n,n+1}$.
By \eqref{lengthrelation} we obtain
\begin{equation}
\label{eq: lengths_formula_GIET}
 \big| I^n_\beta(f)\big| =  \sum_{\alpha \in \A}\big|I_\alpha^{n + 1}(f)\big| \sum_{i = 1}^{Q_{\alpha\beta}}  Df^{h^n_{m_i(\alpha,\beta)}(\alpha)}(x_i^n(\alpha,\beta)).
 \end{equation}
Replacing $f$ by $S$ in the previous formula yields
\begin{equation}
\label{eq: lengths_formula_AIET}
\big| I^n_\beta(S)\big| = \sum_{\alpha \in \A}  \big|I_\alpha^{n + 1}(S)\big| \sum_{i = 1}^{Q_{\alpha\beta}} \exp\left( \sum_{j = 0}^{m_i(\alpha, \beta) - 1} \omega^n_{\delta_j(\alpha)}\right),
\end{equation}
which is nothing more than the $\beta$-th coordinate of (\ref{eq: mataffine}) since  
\begin{equation}
\label{eq: twisted_cocycle_coefficients}
(V_{n, n+1})_{\alpha\beta} =   \sum_{i = 1}^{Q_{\alpha\beta}} \exp\left( \sum_{j = 0}^{m_i(\alpha, \beta) - 1} \omega^n_{\delta_j(\alpha)}\right).
\end{equation}

By (\ref{eq: lengths_formula_GIET}) and (\ref{eq: twisted_cocycle_coefficients}),
\begin{equation}
\label{eq: difference}
\begin{aligned}
 &\frac{1}{\big| I^{n}(f)\big|} \big[I^n(f) - V_{n, n+1}I^{n + 1}(f)\big]_\beta = \\
 &    \sum_{\alpha \in \A} \sum_{i = 1}^{Q_{\alpha\beta}}  \frac{\big|I_\alpha^{n + 1}(f)\big|}{\big| I^{n}(f)\big|}\Bigg( Df^{h^n_{m_i(\alpha, \beta)}(\alpha)}(x_i^n(\alpha,\beta)) -  \exp\left( \sum_{j = 0}^{m_i(\alpha, \beta) - 1} \omega^n_{\delta_j(\alpha)}\right) \Bigg).
 \end{aligned}
 \end{equation}
\begin{lemma}
\label{lem: aux_lemma}
	For any $\alpha, \beta \in \A$ and any $1 \leq i \leq Q_{\alpha\beta},$ 
	\[ \frac{\big|I_\alpha^{n + 1}(f)\big|}{\big| I^{n}(f)\big|}\Bigg|Df^{h^n_{m_i(\alpha, \beta)}(\alpha)}(x_i^n(\alpha,\beta)) -  \exp\left( \sum_{j = 0}^{m_i(\alpha, \beta) - 1} \omega^n_{\delta_j(\alpha)}\right) \Bigg| = O(c^n),\]
	for some $0<c<1$.
\end{lemma}

Assuming Lemma \ref{lem: aux_lemma}, the result now follows from equations (\ref{eq: balanced_base_renorm}), (\ref{eq: difference}) as well as Condition  \ref{Roth_type}.
\end{proof}
\begin{proof}[Proof of Lemma \ref{lem: aux_lemma}.] 

For any $\alpha, \beta \in \A$ and any $1 \leq i \leq Q_{\alpha\beta}$, we have
\begin{align*}
\Bigg|  \ln Df^{h^n_{m_i(\alpha, \beta)}(\alpha)} & (x_i^n(\alpha,\beta)) -  \sum_{j = 0}^{m_i(\alpha, \beta) - 1} \omega^n_{\delta_j(\alpha)}\Bigg|  \\
& \leq  \sum_{j = 0}^{m_i(\alpha, \beta) - 1}  \left| \ln Df^{q^n_{\delta_j(\alpha)}}\big( f^{h^n_j(\alpha)}(x_i^n(\alpha,\beta)) \big) - \omega^n_{\delta_j(\alpha)}\right| \\
& \leq  \sum_{j = 0}^{m_i(\alpha, \beta) - 1} \big( \big| L^n_{\delta_j(\alpha)} - \omega^n_{\delta_j(\alpha)} \big| + O(c^n) \big) \\
&  = O(c^n),
\end{align*}
for some $0 < c < 1$, where the third line in the above expression follows from the Corollary \ref{cor: exp_convergence} and the fact that $m_i(\alpha,\beta)\le\|Q_{n,n+1}\|$, while the last line follows from Proposition \ref{prop: bound_slopes} as well as Condition  \ref{Roth_type}. %since $m_i(\alpha, \beta) \leq Q_{\alpha\beta}.$

By the previous equation, 
\begin{equation}
\label{eq: slope_ratio}
\left| \frac{Df^{h^n_{m_i(\alpha, \beta)}(\alpha)}(x_i^n(\alpha,\beta))}{\exp\left(\sum_{j = 0}^{m_i(\alpha, \beta) - 1} \omega^n_{\delta_j(\alpha)} \right)} - 1 \right| = O(c^n).
\end{equation}
%It follows from Condition \ref{Roth_type}, (\ref{eq: balanced_base_renorm}) and (\ref{eq: lengths_formula_AIET}), that  for any $\tau > 0$, there exists $C > 0$ such that \[ \max_{\alpha, \beta \in \A} \frac{\big|I^{n}_\beta(S)\big|}{\big|I^{n + 1}_\alpha(S)\big|} \leq Ce^{n\tau}. \]
The following observation will be useful in the proof. 
\begin{claim}
There exists $C > 0$ such that 
\[ \max_{\alpha \in \A} \frac{\big| I^{n + 1}_\alpha(f)\big|}{\big|I^{n}(f)\big|}\frac{\big|I^{n}(S)\big|}{\big| I^{n + 1}_\alpha(S)\big|} \leq C\|Q_{n, n+1}\|,\]
for any $n \geq 1$.
\end{claim}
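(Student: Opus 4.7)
The plan is to decompose
\[
\frac{|I^{n+1}_\alpha(f)|}{|I^n(f)|} \cdot \frac{|I^n(S)|}{|I^{n+1}_\alpha(S)|} = \frac{|\tilde I^{n+1}_\alpha(f)|}{|\tilde I^{n+1}_\alpha(S)|} \cdot \frac{|I^n(S)|/|I^{n+1}(S)|}{|I^n(f)|/|I^{n+1}(f)|}
\]
and bound the two factors separately. The first factor is immediately controlled by Lemma \ref{lem: compact}, since both $\tilde I^{n+1}(f)$ and $\tilde I^{n+1}(S)$ belong to the compact set $K$, where every coordinate is uniformly bounded away from $0$ and $\infty$.

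For the second factor, I would expand both $|I^n(f)|$ and $|I^n(S)|$ using the tower structure at level $n+1$, namely
\[
|I^n(f)| = \sum_{\alpha' \in \mathcal{A}} \sum_{j=0}^{b^n_{\alpha'}-1} \big|f^{h^n_j(\alpha')}(I^{n+1}_{\alpha'}(f))\big|,
\]
together with the analogous identity for $S$, and compare the two sums term by term. By the mean value theorem, $|f^{h^n_j(\alpha')}(I^{n+1}_{\alpha'}(f))|/|I^{n+1}_{\alpha'}(f)| = Df^{h^n_j(\alpha')}(y_j)$ for some $y_j \in I^{n+1}_{\alpha'}(f)$, and by the chain rule this factorizes as $\prod_{i=0}^{j-1} Df^{q^n_{\delta_i(\alpha')}}(f^{h^n_i(\alpha')}(y_j))$. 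Each factor has its base point lying in $I^n_{\delta_i(\alpha')}(f)$, so combining Corollary \ref{cor: exp_convergence} (bounded distortion on complete returns) with Proposition \ref{prop: bound_slopes} gives $Df^{q^n_{\delta_i(\alpha')}}(\cdot) = \exp(\omega^n_{\delta_i(\alpha')})(1+O(c^n))$. Since the corresponding $S$-ratio equals exactly $\exp\bigl(\sum_{i=0}^{j-1} \omega^n_{\delta_i(\alpha')}\bigr)$, the term-by-term quotient is $(1+O(c^n))^j$.

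The main obstacle is controlling the accumulated exponent: one has $j \le b^n_{\alpha'} \le d\,\|Q_{n,n+1}\|$, so one needs $c^n \|Q_{n,n+1}\|$ to remain uniformly bounded. I would combine Conditions \ref{exp_growth} and \ref{Roth_type}, applied with a Roth exponent $\tau$ small enough that $c\, e^{C\tau} < 1$, to guarantee that $\|Q_{n,n+1}\|$ grows at a subexponential rate strictly smaller than $-\log c$, so that $c^n \|Q_{n,n+1}\| \to 0$. This makes $(1+O(c^n))^j$ uniformly bounded in $n$, $j$, and $\alpha'$; the term-by-term comparison, together with a second application of Lemma \ref{lem: compact} converting $|I^{n+1}_{\alpha'}(f)|$ into $|I^{n+1}_{\alpha'}(S)|$ up to bounded multiplicative factors, then yields $|I^n(f)|/|I^{n+1}(f)| \asymp |I^n(S)|/|I^{n+1}(S)|$. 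This establishes the claim with an absolute constant $C$, which is in fact strictly stronger than the stated bound $C\|Q_{n,n+1}\|$.
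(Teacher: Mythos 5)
Your proof is correct, and it reaches the stated claim by a somewhat different and in fact sharper route than the paper. The paper fixes a letter $\beta$, uses the balance estimate \eqref{eq: balanced_base_renorm} to replace $|I^{n}(f)|$ and $|I^{n}(S)|$ by $|I^{n}_\beta(f)|$ and $|I^{n}_\beta(S)|$, expands $|I^{n}_\beta(S)|$ via the affine length formula \eqref{eq: lengths_formula_AIET}, bounds each summand's $f$-counterpart by a constant using a single term of \eqref{eq: lengths_formula_GIET} together with \eqref{eq: slope_ratio}, and then just counts the at most $d\,\|Q_{n,n+1}\|$ summands --- which is exactly where the factor $\|Q_{n,n+1}\|$ in the statement comes from. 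You instead isolate the ratio of normalized lengths (bounded directly by Lemma \ref{lem: compact}) and compare the two tower expansions of $|I^{n}(f)|/|I^{n+1}(f)|$ and $|I^{n}(S)|/|I^{n+1}(S)|$ term by term, with per-floor errors $(1+O(c^n))^{j}$ controlled because $c^{n}\|Q_{n,n+1}\|\to 0$ by Conditions \ref{exp_growth} and \ref{Roth_type}; this is the same mechanism the paper uses to derive \eqref{eq: slope_ratio}, so your per-floor estimate from Corollary \ref{cor: exp_convergence} and Proposition \ref{prop: bound_slopes} is essentially a multiplicative re-derivation of that equation. All inputs you invoke are available at this stage, so there is no circularity, and you obtain a bound by an absolute constant, which implies the stated $C\|Q_{n,n+1}\|$ and is consistent with Lemma \ref{lem: interval_bounds} proved later (which, depending on Theorem \ref{thm: affine_shadow}, could not be used here). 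The trade-off: the paper's count-the-terms argument is shorter, and the extra factor $\|Q_{n,n+1}\|$ is harmless since it is later absorbed by $c^{n}$ via Condition \ref{Roth_type}, whereas your sum-against-sum comparison avoids the factor altogether. One small point of precision: Lemma \ref{lem: compact} compares the \emph{normalized} length vectors $\tilde I^{n+1}(f)$ and $\tilde I^{n+1}(S)$, not the absolute lengths, so the step ``converting $|I^{n+1}_{\alpha'}(f)|$ into $|I^{n+1}_{\alpha'}(S)|$'' should be stated for normalized coordinates; since your decomposition only requires comparing $|I^{n}(f)|/|I^{n+1}(f)|$ with $|I^{n}(S)|/|I^{n+1}(S)|$, these normalized quantities are all that is needed and the argument goes through unchanged.
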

\begin{proof}
Fix $\beta \in \A$. By Lemma \ref{lem: compact}, 
\[  \frac{\big| I^{n + 1}_\beta(f)\big|}{\big|I^{n}(f)\big|}\frac{\big|I^{n}(S)\big|}{\big| I^{n + 1}_\beta(S)\big|}  \leq dC_K  \frac{\big| I^{n + 1}_\beta(f)\big|}{\big|I^{n}_\beta(f)\big|}\frac{\big|I^{n}_\beta(S)\big|}{\big| I^{n + 1}_\beta(S)\big|}.\]
By  (\ref{eq: balanced_base_renorm}), (\ref{eq: lengths_formula_GIET}), (\ref{eq: lengths_formula_AIET}) and (\ref{eq: slope_ratio}),
\begin{align*}
 \frac{\big| I^{n + 1}_\beta(f)\big|}{\big|I^{n}_\beta(f)\big|}\frac{\big|I^{n}_\beta(S)\big|}{\big| I^{n + 1}_\beta(S)\big|} & \leq C_K \sum_{\alpha \in \A} \sum_{i = 1}^{Q_{\alpha\beta}}  \frac{\big| I^{n + 1}_\beta(f)\big|}{\big|I^{n}_\beta(f)\big|} \exp\left( \sum_{j = 0}^{m_i(\alpha, \beta) - 1} \omega^n_{\delta_j(\alpha)}\right)
\\
& \leq C_K^2  \sum_{\alpha \in \A} \sum_{i = 1}^{Q_{\alpha\beta}}  \frac{\exp\left( \sum_{j = 0}^{m_i(\alpha, \beta) - 1} \omega^n_{\delta_j(\alpha)}\right)}{Df^{h^n_{m_i(\alpha, \beta)}(\alpha)}(x_i^n(\alpha,\beta))} \\
&  \leq C_K^2  \sum_{\alpha \in \A} \sum_{i = 1}^{Q_{\alpha\beta}} (1 + O(c^n))\\
&
\le C\|Q_{n,n+1}\|,
\end{align*}
for some $C>0$.
\end{proof}

Finally, by (\ref{eq: lengths_formula_AIET}), Condition  \ref{Roth_type} and the previous claim, there exists $C > 0$ such that
\begin{align*}
& \frac{\big|I_\alpha^{n + 1}(f)\big|}{\big| I^{n}(f)\big|} \Bigg|Df^{h^n_{m_i(\alpha, \beta)}(\alpha)}(x_i^n(\alpha,\beta)) -  \exp\left( \sum_{j = 0}^{m_i(\alpha, \beta) - 1} \omega^n_{\delta_j(\alpha)}\right) \Bigg| \\
& =   \left| \frac{Df^{h^n_{m_i(\alpha, \beta)}(\alpha)}(x_i^n(\alpha,\beta))}{\exp\left(\sum_{j = 0}^{m_i(\alpha, \beta) - 1} \omega^n_{\delta_j(\alpha)} \right)} - 1 \right|  \frac{\big|I_\alpha^{n + 1}(f)\big|}{\big| I^{n}(f)\big|}\frac{\big|I^{n + 1}_\alpha(S)\big|}{\big|I^{n + 1}_\alpha(S)\big|}\exp\left(\sum_{j = 0}^{m_i(\alpha, \beta) - 1} \omega^n_{\delta_j(\alpha)} \right) \\
& \leq   \left| \frac{Df^{h^n_{m_i(\alpha, \beta)}(\alpha)}(x_i^n(\alpha,\beta))}{\exp\left(\sum_{j = 0}^{m_i(\alpha, \beta) - 1} \omega^n_{\delta_j(\alpha)} \right)} - 1 \right|  \frac{\big|I_\alpha^{n + 1}(f)\big|}{\big| I^{n}(f)\big|}\frac{\big|I^{n}(S)\big|}{\big|I^{n + 1}_\alpha(S)\big|} \\
& \leq  C \left| \frac{Df^{h^n_{m_i(\alpha, \beta)}(\alpha)}(x_i^n(\alpha,\beta))}{\exp\left(\sum_{j = 0}^{m_i(\alpha, \beta) - 1} \omega^n_{\delta_j(\alpha)} \right)} - 1 \right|    \|Q_{n, n+1}\| \\
& = O(c^n),
\end{align*}
for some $0 < c < 1$.
\begin{comment}
\begin{align*}
& \frac{1}{|I^{n+1}(f)|}  \left|I^n_\beta(f) - V_n I^{n + 1}_\beta(f) \right|  \\
&= \sum_{\alpha \in \A} \frac{|I^{n+1}_\alpha(f)|}{|I^{n+1}(f)|}   \sum_{i = 1}^{Q_{\alpha\beta}} \left( Df^{h^n_{m_i(\alpha, \beta)}(\alpha)}(x_i^n(\alpha,\beta)) - \exp\left(\sum_{j = 0}^{m_i(\alpha, \beta) - 1} \omega^n_{\delta_j(\alpha)} \right) \right)\\
& \leq C_c C_K c^n \sum_{\alpha \in \A}  \frac{|I^{n+1}_\alpha(S)|}{|I^{n+1}_\alpha(S)|}  \sum_{i = 1}^{Q_{\alpha\beta}} \exp\left(\sum_{j = 0}^{m_i(\alpha, \beta) - 1} \omega^n_{\delta_j(\alpha)} \right) \\
& \leq C_c C_K c^n \sum_{\alpha \in \A}  \frac{|I^{n}_\beta(S)|}{|I^{n+1}_\alpha(S)|}  \\
& \leq C_c C_K^2 c^n \| Q_{n, n + 1}\| \\
& = O(c^n)
\end{align*}
for some $0 < c < 1.$
\end{comment}

\end{proof}
%This finishes the proof of Lemma \ref{lem: lengths_pseudo_orbit}.
%\end{proof}

As a corollary, we have the following.

\begin{corollary}\label{lem:expdecay}
	For every $n\in\N$ we have 
	\[
	\tilde I^n(f)=\tilde V_n\tilde I^{n+1}(f)+O(c^n),
	\]
	for some $0<c<1$.
\end{corollary}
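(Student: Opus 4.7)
The plan is to deduce the corollary from Lemma~\ref{lem: lengths_pseudo_orbit} by a routine normalization argument, using that the normalized operator $\tilde V_n$ and the normalized vector $\tilde I^{n+1}(f)$ satisfy $\tilde V_n\tilde I^{n+1}(f)=V_n I^{n+1}(f)/|V_n I^{n+1}(f)|$ by the definitions in Section~\ref{sc:notations}.

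First I would rewrite Lemma~\ref{lem: lengths_pseudo_orbit} in the form
\[
I^n(f)=V_n I^{n+1}(f)+|I^n(f)|\,r_n,\qquad |r_n|=O(c^n).
\]
Taking $\ell^1$-norms and using the triangle inequality, this yields
\[
\bigl|\,|I^n(f)|-|V_n I^{n+1}(f)|\,\bigr|\le |I^n(f)|\cdot O(c^n),
\]
so that
\[
\frac{|V_n I^{n+1}(f)|}{|I^n(f)|}=1+O(c^n).
\]

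Next I would divide the displayed identity above by $|I^n(f)|$ and factor through the normalization: since
\[
\frac{V_n I^{n+1}(f)}{|I^n(f)|}=\frac{|V_n I^{n+1}(f)|}{|I^n(f)|}\cdot\frac{V_n I^{n+1}(f)}{|V_n I^{n+1}(f)|}=\bigl(1+O(c^n)\bigr)\,\tilde V_n\tilde I^{n+1}(f),
\]
we obtain
\[
\tilde I^n(f)=\bigl(1+O(c^n)\bigr)\,\tilde V_n\tilde I^{n+1}(f)+O(c^n).
\]
Because $\tilde V_n\tilde I^{n+1}(f)$ lies in $\Delta_{\mathcal A}$ and therefore has norm one, absorbing the prefactor into the remainder gives the desired bound $\tilde I^n(f)=\tilde V_n\tilde I^{n+1}(f)+O(c^n)$.

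There is essentially no obstacle here; the only thing one has to be careful about is checking that the constants in the $O(c^n)$ bounds do not depend on $n$. This is automatic: the implicit constant in Lemma~\ref{lem: lengths_pseudo_orbit} is uniform in $n$, and the passage from $|I^n(f)|$ to $|V_n I^{n+1}(f)|$ introduces only a factor $1+O(c^n)$, which is uniformly bounded. The one point worth making explicit is that we can use the normalization $\tilde V_n\tilde I^{n+1}(f)=V_n I^{n+1}(f)/|V_n I^{n+1}(f)|$ without worrying about $V_n I^{n+1}(f)$ being small, because its $\ell^1$-norm is comparable to $|I^n(f)|>0$ by the estimate above.
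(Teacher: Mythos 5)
Your proof is correct and follows essentially the same route as the paper: both deduce from Lemma~\ref{lem: lengths_pseudo_orbit} that $|V_n I^{n+1}(f)|=(1+O(c^n))|I^n(f)|$ and then carry out the same normalization bookkeeping, using $\tilde V_n\tilde I^{n+1}(f)=V_nI^{n+1}(f)/|V_nI^{n+1}(f)|$. Your explicit remarks on uniformity of constants and non-degeneracy of $|V_nI^{n+1}(f)|$ are fine but add nothing beyond the paper's argument.
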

\begin{proof}
	By Lemma \ref{lem: lengths_pseudo_orbit} we have 
	\[
	|V_n I^{n+1}(f)|=(1+O(c^n))|I^{n}(f)|.
	\]
	Thus, again by Lemma \ref{lem: lengths_pseudo_orbit}, we obtain
	\[
	\begin{split}
	\tilde V_nI^{n+1}(f)&=\frac{V_nI^{n+1}(f)}{|V_nI^{n+1}(f)|}=\frac{V_nI^{n+1}(f)}{|I^{n}(f)|}\cdot\frac{|I^{n}(f)|}{|V_nI^{n+1}(f)|}\\
	&=\frac{I^{n}(f)}{|I^{n}(f)|}(1+O(c^n))=[I^n(f)](1+O(c^n))\\
	&=\tilde I^n(f)+O(c^n).
	\end{split}
	\]
\end{proof}
To formulate the next result, let us recall the classical notion of \emph{Hilbert projective metric} $d_p$ on $\Delta_{\A}$, given by
\[
d_p(v,w)=\log \sup\left\{\frac{v_\alpha}{w_\alpha}\frac{w_\beta}{v_\beta};\ \alpha,\beta\in\mathcal A\right\},
\]
for any $v, w \in \Delta_{\A}$. This metric is equivalent to the Euclidean metric in any compact subset of $\Delta_{\A}$.

\begin{lemma}\label{lem: contraction}
There exists $0<\kappa<1$ such that, for every $n \in \N$, $V_n$ is a contraction in the projective metric with $\kappa$ as contraction constant.
\end{lemma}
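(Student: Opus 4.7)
The plan is to invoke Birkhoff's theorem on the Hilbert projective metric: any strictly positive matrix with all entries in a range $[e^{-C_0}, e^{C_0}]$ contracts $\Delta_{\A}$ in $d_p$ with rate at most $\tanh(C_0) < 1$, and any non-negative matrix preserving the positive cone is non-expansive for $d_p$. It therefore suffices to factor $V_n$ as a product of non-negative matrices, one of which is uniformly positive with entries bounded in such a range.

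Such a factorization will be extracted from Condition (2) of Proposition \ref{prop: generic_condition}: since $\lambda^n \in A^\gamma(\R_+^{\A})$ for a fixed positive path $\gamma$, and $\mathcal R$ is constructed as a return map to the set $\{\lambda \in A^\gamma(\R_+^{\A})\}$ (see \cite{viana_ergodic_2006}), the Rauzy--Veech path between consecutive $\mathcal R$-iterates contains $\gamma$ as a consecutive sub-path. This yields a splitting
\[ V_n = V^{\mathrm{out}}_n \cdot V^\gamma_n \cdot V^{\mathrm{in}}_n, \]
where $V^\gamma_n$ is the product of the $|\gamma|$ twisted matrices $U$ associated with the sub-path $\gamma$, and either of $V^{\mathrm{out}}_n$, $V^{\mathrm{in}}_n$ may be trivial.

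Next, I would verify that $V^\gamma_n$ has strictly positive entries in $[e^{-C_0}, e^{C_0}]$ for some $C_0$ independent of $n$. Positivity follows because $V^\gamma_n$ shares the support of $A^\gamma$, which is positive by construction. For the uniform bound, each entry of $V^\gamma_n$ is a finite sum (of combinatorially bounded length depending only on $\gamma$) of products of exponentials $\exp(\omega^i_\delta)$, where $\omega^i$ is obtained from $\omega^n$ by iterating the (non-accelerated) Zorich height cocycle at most $|\gamma|$ steps. By Condition (3) of Proposition \ref{prop: generic_condition}, $\omega^n$ is uniformly bounded, and since a single Rauzy--Veech step transforms $\omega$ by an elementary matrix of norm at most two, all relevant $\omega^i$ are uniformly bounded as well. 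Hence the entries of $V^\gamma_n$ lie in $[e^{-C_0}, e^{C_0}]$. Applying Birkhoff's theorem to $V^\gamma_n$ together with the non-expansiveness of $V^{\mathrm{out}}_n$ and $V^{\mathrm{in}}_n$ (both non-negative as products of non-negative matrices $U_i$) yields $d_p(V_n v, V_n w) \leq \tanh(C_0) \cdot d_p(v, w)$ for all $v, w \in \Delta_{\A}$, giving the conclusion with $\kappa = \tanh(C_0) < 1$.

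The main obstacle I anticipate is making precise the factorization of $V_n$ through the $\gamma$-block. Condition (2) as stated is merely a membership of $\lambda^n$ in a positive cone, and translating this into a rigorous combinatorial statement about the location of $\gamma$ within the Rauzy--Veech path corresponding to a single $\mathcal R$-step requires a careful unpacking of the recurrence construction of $\mathcal R$. Once this combinatorial fact is established, the analytic bounds leading to the uniform contraction are routine.
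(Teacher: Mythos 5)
Your proposal is correct and follows essentially the same route as the paper: the paper factors $V_n = A^{|\gamma|}({\mathcal R^nS})\cdot B({\mathcal R^nS})$ with the positive block coming from the fixed path $\gamma$ of Condition (2), bounds its entries uniformly via Condition (3) (boundedness of $\omega^n$, with exactly the $2^{|\gamma|}$-type growth for intermediate slopes that you describe), and concludes by the positive-matrix contraction principle (Proposition 26.3 in Viana's notes, i.e.\ Birkhoff) together with non-expansiveness of the remaining non-negative factor. The obstacle you anticipate is resolved as you suggest: the ``more precisely'' part of Condition (2), $\lambda^n\in A^{\gamma}(\R^{\mathcal A}_+)$, means by the Markov structure of Rauzy--Veech induction that the path of each $\mathcal{R}$-block begins with $\gamma$, so the positive block sits at the left of $V_n$ and the intermediate slope vectors are within $|\gamma|$ elementary steps of $\omega^n$.
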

\begin{proof}
	Note that $V_n=A^{|\gamma|}({\mathcal R^nS})\cdot B({\mathcal R^nS})$, where $B({\mathcal R^nS})$ is some non-negative matrix. It is a classical fact (see, e.g., (92) in \cite{viana_ergodic_2006}) that multiplication by a non-negative matrix is non-expanding in the projective metric. It is thus enough to show that multiplication by $A^{|\gamma|}({\mathcal R^nS})$ is a contraction with a constant that does not depend on $n$.
	
First note that $A^{|\gamma|}({\mathcal R^nS})$ is positive, since $A^{|\gamma|}$ is positive.
By Proposition 26.3 in \cite{viana_ergodic_2006}, it is enough to show that the entries of $V_n$ are uniformly bounded from below and above. However in view of Condition \ref{bounded_central} (and the fact that $\omega\in E^c(\tau,\lambda,\pi)$) we have that 
\[
\max_{\alpha\in\mathcal A}|\omega_\alpha^n|<K,
\]
for some $1\le K<\infty$ uniformly in $n\in\N$. In particular, we get that for every $n\in\N$ we have
\[
\max_{\alpha,\beta\in\mathcal A}A^{|\gamma|}_{\alpha,\beta}({\mathcal R^nS})\le (\#\mathcal A)^{|\gamma|}e^K\cdot e^{2K}\cdot e^{4K}\cdot\ldots\cdot e^{2^{|\gamma|}K},
\]
and
\[
\min_{\alpha,\beta\in\mathcal A}A^{|\gamma|}_{\alpha,\beta}({\mathcal R^nS})\ge e^{-K}\cdot e^{-2K}\cdot e^{-4K}\cdot\ldots\cdot e^{-2^{|\gamma|}K}.
\]
Since both bounds do not depend on $n$, this concludes the proof of the lemma.
\end{proof}
The ideas used for the following proof come from \cite{cunha_rigidity_2014}.
\begin{proof}[Proof of Proposition \ref{prop: bound_intervals}]
	For $n<m$ let us denote 
	\[
	\tilde V_n^m:=\tilde V_n\circ\ldots\circ \tilde V_m.
	\]
	By Corollary \ref{lem:expdecay} and Lemma \ref{lem: contraction} with Lemma \ref{lem: compact}, we have
	\begin{equation*}
	\begin{split}
	d_p(\tilde I^{n}(f)&,\tilde V_{n}^{2n-1} \tilde I^{2n}(f))\\&\le\sum_{i=0}^{n-1} d_p(\tilde V_{n}^{2n-i-1}\tilde I^{2n-i}(f),\tilde V_{n}^{2n-(i+1)-1}\tilde I^{2n-i-1}(f))\\
	&\le \sum_{i=0}^{n-1}\kappa^{n-i-1}O(c^{2n-(i+1)})\le{(1-\kappa)^{-1}}O(c^{n})=O(c^{n}).
	\end{split}
	\end{equation*}
	Thus, by \eqref{eq: mataffine},
	\begin{equation}
	\begin{split}
	d_p(\tilde I^{n}(f),\tilde I^{n}(S))&\le d_p(\tilde I^{n}(f),\tilde V_{n}^{2n-1} \tilde I^{2n}(f))\\ &\quad +
	d_p(\tilde V_{n}^{2n-1} \tilde I^{2n}(f),\tilde V_{n}^{2n-1}\tilde I^{2n}(S))\\ &\le O(c^{n})+\kappa^n\cdot D,
	\end{split}
	\end{equation}
	where $D$ is the diameter in $d_p$ of the compact set given by Lemma \ref{lem: compact}, which contains both $\tilde I^{2n}(f)$ and $\tilde	I^{2n}(S)$.
	Thus, we get 
	\[
	d_p(\tilde I^{n}(f),\tilde I^{n}(S))\le O(c^{n})+O(\kappa^{n}),
	\]
	which yields
	\[
	d_p(\tilde I^n(f),\tilde I^n(S))=O(\max\{c,\kappa\}^n).
	\]
	
	\end{proof}

\subsection{Proof of Proposition \ref{prop: bound_image_intervals}}
We use the results of previous sections, in particular, Proposition \ref{prop: bound_slopes} and Proposition \ref{prop: bound_intervals} to prove Proposition \ref{prop: bound_image_intervals}.

\begin{proof}[Proof of Proposition \ref{prop: bound_image_intervals}] 
Note first that 
\[
\frac{|\mathcal{R}^nS(I^n_\alpha(S))|}{|I^n(S)|}=\frac{e^{\omega^n_{\alpha}}|I^n_\alpha(S)|}{|I^n(S)|}.
\]
Moreover, 
\[
\frac{|\mathcal{R}^n f(I^n_\alpha(f))|}{|I^n(f)|}=
\frac{e^{L_\alpha^n} |I^n_\alpha(f)|}{|I^n(f)|}.
\]
Thus, we obtain
\[
\begin{split}
\left| \frac{|\mathcal{R}^n f(I^n_\alpha(f))|}{|I^n(f)|} -  \frac{|\mathcal{R}^nS(I^n_\alpha(S))|}{|I^n(S)|} \right| &=
\left| \frac{e^{L_\alpha^n} |I^n_\alpha(f)|}{|I^n(f)|} -  \frac{e^{\omega^n_{\alpha}}|I^n_\alpha(S)|}{|I^n(S)|} \right|\\
\le\left| \frac{e^{L_\alpha^n} |I^n_\alpha(f)|}{|I^n(f)|} -  \frac{e^{L_\alpha^n}|I^n_\alpha(S)|}{|I^n(S)|} \right|&+\left| \frac{e^{L_\alpha^n} |I^n_\alpha(S)|}{|I^n(S)|} -  \frac{e^{\omega^n_{\alpha}}|I^n_\alpha(S)|}{|I^n(S)|} \right|.
\end{split}
\]
By Proposition \ref{prop: bound_intervals}, 
\[
\left| \frac{e^{L_\alpha^n} |I^n_\alpha(f)|}{|I^n(f)|} -  \frac{e^{L_\alpha^n}|I^n_\alpha(S)|}{|I^n(S)|} \right|=e^{L_\alpha^n}O(c^n),
\]
and by \ref{prop: bound_slopes},
\[
\left| \frac{e^{L_\alpha^n} |I^n_\alpha(S)|}{|I^n(S)|} -  \frac{e^{\omega^n_{\alpha}}|I^n_\alpha(S)|}{|I^n(S)|} \right|=e^{L_\alpha^n}O(c^n),
\]
for some $0 < c < 1$. The result now follows from \eqref{eq:boundedderivative}.
\end{proof}

\section{Smoothness of the conjugacy map}
\label{sc: conjugacy}

This section is devoted to the proof of Propositions \ref{prop: cohomological_equation} and \ref{prop: regular_conjugacy}.  The following consequences of Theorem \ref{thm: affine_shadow} will be helpful. 

\begin{lemma}
\label{lem: interval_bounds}
Let $f$, $S$, as in Theorem \ref{thm: affine_shadow}. Then
\begin{enumerate}
\item  For any $\alpha \in \A$,  $\frac{|I^n(f)|}{|I^n(S)|} \frac{|I^n_\alpha(S)|}{|I^n_\alpha(f)|} = 1 + O(c^n),$ for some $0 < c < 1$.
\item $\frac{|I^n(f)|}{|I^n(S)|} \frac{|I^{n-1}(S)|}{|I^{n-1}(f)|} = 1 + O(c^n),$ for some $0 < c < 1$.
\item For any $\alpha \in \A$,   $\frac{|I^n_\alpha(f)|}{|I^n_\alpha(S)|}, \frac{|{\mathcal R^nf}(I^n_\alpha(f))|}{|{\mathcal R^nS}(I^n_\alpha(S))|}, \frac{|I^n(f)|}{|I^n(S)|}  \to C$ for some $C > 0$.
\end{enumerate}
\end{lemma}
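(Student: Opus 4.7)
The plan is to prove the three assertions sequentially, each building on the previous, using the shadowing estimates from Section \ref{sc: existenceofaffineshadow}.

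\emph{Part (1)} is immediate from Proposition \ref{prop: bound_intervals} and Lemma \ref{lem: compact}. Indeed,
\[\frac{|I^n(f)|}{|I^n(S)|} \frac{|I^n_\alpha(S)|}{|I^n_\alpha(f)|} - 1 = \frac{|I^n(f)|}{|I^n_\alpha(f)|}\left( \frac{|I^n_\alpha(S)|}{|I^n(S)|} - \frac{|I^n_\alpha(f)|}{|I^n(f)|}\right),\]
where the parenthesized difference is $O(c^n)$ by Proposition \ref{prop: bound_intervals}, while the prefactor is uniformly bounded by Lemma \ref{lem: compact}.

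\emph{Part (2)} uses the length recursion. For the AIET $S$ one has the exact identity $I^{n-1}(S) = V_{n-1} I^n(S)$ from \eqref{eq: mataffine}, while Lemma \ref{lem: lengths_pseudo_orbit} (shifted in $n$) gives $|I^{n-1}(f) - V_{n-1} I^n(f)| = O(c^{n-1})|I^{n-1}(f)|$ componentwise. Summing coordinates yields $|V_{n-1} I^n(f)| = (1 + O(c^{n-1}))|I^{n-1}(f)|$, so
\[\frac{|I^n(f)|}{|I^n(S)|}\frac{|I^{n-1}(S)|}{|I^{n-1}(f)|} = (1 + O(c^{n-1})) \frac{|V_{n-1}I^n(S)|/|I^n(S)|}{|V_{n-1}I^n(f)|/|I^n(f)|}.\]
Writing $|V_{n-1} w|/|w| = \sum_\beta c^{(n-1)}_\beta\, w_\beta/|w|$ with $c^{(n-1)}_\beta = \sum_\alpha (V_{n-1})_{\alpha\beta}$, Proposition \ref{prop: bound_intervals} combined with the uniform positivity bounds on the entries of $V_{n-1}$ (which follow from Condition \ref{bounded_central} and the proof of Lemma \ref{lem: contraction}) and with Lemma \ref{lem: compact} gives that $|V_{n-1}I^n(f)|/|I^n(f)|$ and $|V_{n-1}I^n(S)|/|I^n(S)|$ are both bounded away from $0$ and $\infty$, and differ by $O(c^n)$. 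Hence the right-hand side is $1 + O(c^n)$.

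\emph{Part (3)} is a Cauchy-telescoping argument. Set $r_n := |I^n(f)|/|I^n(S)|$. By (2), $\log r_n - \log r_{n-1} = O(c^n)$, which is summable, so $\log r_n$ converges and $r_n \to C$ for some $C > 0$. Part (1) then yields $|I^n_\alpha(f)|/|I^n_\alpha(S)| = r_n(1 + O(c^n)) \to C$. For the image intervals, Proposition \ref{prop: bound_image_intervals} gives
\[\frac{|\mathcal R^n f(I^n_\alpha(f))|}{|I^n(f)|} = \frac{|\mathcal R^n S(I^n_\alpha(S))|}{|I^n(S)|} + O(c^n),\]
and since $|\mathcal R^n S(I^n_\alpha(S))|/|I^n(S)| = e^{\omega^n_\alpha}|I^n_\alpha(S)|/|I^n(S)|$ is bounded away from $0$ by Condition \ref{bounded_central} and Lemma \ref{lem: compact}, it follows that $|\mathcal R^n f(I^n_\alpha(f))|/|\mathcal R^n S(I^n_\alpha(S))| = r_n(1 + O(c^n)) \to C$ as well.

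The only technically delicate point is the uniform control on the positive entries of $V_{n-1}$ invoked in Part (2); modulo this (which is essentially established inside the proof of Lemma \ref{lem: contraction} from the bounded-slope estimate), the lemma reduces to straightforward bookkeeping around the shadowing estimates already in place.
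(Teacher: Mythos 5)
Parts (1) and (3) of your proposal are correct and essentially the paper's own argument: the identity you use for (1) is exactly what "follows from Lemma \ref{lem: compact} and Proposition \ref{prop: bound_intervals}" means, and your log-telescoping in (3) is the paper's product $\prod_i(1+O(c^i))$ in different clothing (the paper invokes Theorem \ref{thm: affine_shadow} where you invoke Proposition \ref{prop: bound_image_intervals}; both work). The problem is part (2). There you justify the key estimate by claiming uniform two-sided bounds on the entries of $V_{n-1}$, attributing them to the proof of Lemma \ref{lem: contraction}. That lemma does not give this: its proof bounds only the entries of the fixed-length positive factor $A^{|\gamma|}(\mathcal R^{n-1}S)$ in the factorization $V_{n-1}=A^{|\gamma|}(\mathcal R^{n-1}S)\,B(\mathcal R^{n-1}S)$, while $V_{n-1}$ ranges over a whole accelerated Zorich block whose combinatorial norm $\|Q_{n-1,n}\|$ is unbounded in $n$ (Condition \ref{Roth_type} only gives subexponential growth, cf. \eqref{eq: twisted_cocycle_coefficients}). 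Correspondingly, $|V_{n-1}I^n(S)|/|I^n(S)|=|I^{n-1}(S)|/|I^n(S)|$ has no uniform upper bound, so your intermediate assertion that the two quantities are ``bounded away from $0$ and $\infty$ and differ by $O(c^n)$'' is unjustified as stated; only the lower bound on the entries of $V_{n-1}$ survives.

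The gap is local and fixable within your scheme, without any norm bound on $V_{n-1}$. By \eqref{eq: balanced_base_renorm} the vector $\tilde I^n(f)$ is balanced, so $\min_\beta \tilde I^n_\beta(f)\ge (dC_K)^{-1}$, and Proposition \ref{prop: bound_intervals} then gives the componentwise estimate $|\tilde I^n_\beta(S)-\tilde I^n_\beta(f)|\le d\,C_K\,O(c^n)\,\tilde I^n_\beta(f)$ for every $\beta$. Since $V_{n-1}$ has non-negative entries, this is preserved under multiplication by $V_{n-1}$, whence $\bigl||V_{n-1}\tilde I^n(S)|-|V_{n-1}\tilde I^n(f)|\bigr|\le d\,C_K\,O(c^n)\,|V_{n-1}\tilde I^n(f)|$ and therefore $|V_{n-1}\tilde I^n(S)|/|V_{n-1}\tilde I^n(f)|=1+O(c^n)$; together with your reduction via \eqref{eq: mataffine} and Lemma \ref{lem: lengths_pseudo_orbit} this completes part (2). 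Alternatively, the paper's route estimates the analogous ratio using the Zorich block $Q_{n-1,n}$ applied to the normalized length vectors and absorbs the factor $\|Q_{n-1,n}\|$ into $c^n$ via Condition \ref{Roth_type}. Either way, the unbounded block norm must be dominated componentwise or beaten by the Roth-type condition; it cannot simply be declared bounded.
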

\begin{proof}
The first assertion follows from Lemma \ref{lem: compact} and Proposition \ref{prop: bound_intervals}.  For any $n \geq 1$ we have
\begin{align*}
\frac{|I^n(f)|}{|I^n(S)|} \frac{|I^{n-1}(S)|}{|I^{n-1}(f)|}  & = \frac{\left| \HCT{n - 1}{n} \tilde I^n(S)\right|}{\left| \HCT{n - 1}{n} \tilde I^n(f)\right|} \\
& \leq 1  +  \frac{\left| \HCT{n - 1}{n} \left( \tilde I^n(S) - \tilde I^n(f) \right)\right|}{\left| \HCT{n - 1}{n} \tilde I^n(f)\right|} \\
& \leq 1 + \big\| \HCT{n - 1}{n}\big\| \left| \tilde I^n(S) - \tilde I^n(f) \right| \\
& = 1 + O(c^n),
\end{align*}
for some $0 < c < 1$, by Condition  \ref{Roth_type} as well as Theorem \ref{thm: affine_shadow}. This proves the second assertion.

Notice that by Theorem \ref{thm: affine_shadow} and the first assertion, to prove the last assertion, it is sufficient to show that $\frac{|I^n(f)|}{|I^n(S)|}$ converges as $n \to \infty$. 

By the second assertion,
\begin{align*}
\frac{|I^n(f)|}{|I^n(S)|} & = \frac{|I^0(f)|}{|I^0(S)|}\prod_{i = 1}^{n} \frac{|I^i(f)|}{|I^i(S)|}\frac{|I^{i - 1}(S)|}{|I^{i - 1}(f)|} \\
& =  \frac{|I^0(f)|}{|I^0(S)|}\prod_{i = 1}^{n}(1 + O(c^n)),
\end{align*}
for some $0 < c < 1$. Therefore, $\frac{|I^n(f)|}{|I^n(S)|}$ converges as $n \to \infty$.
\end{proof}

\begin{comment}
{Maybe the following Lemma is not needed.}
\begin{lemma}
\label{lem: derivative_bounds}
Let $f$ as in Theorem \ref{thm: affine_shadow}. Then
%\[\min_{\substack{x \in I^n_\alpha(f), \\ y \in I^n_\alpha(g)}} \frac{Df^{q^n_\alpha}(x)}{DS^{q^n_\alpha}(y)} = 1 + O(c^n) = \max_{\substack{x \in I^n_\alpha(f), \\ y \in I^n_\alpha(g)}} \frac{Df^{q^n_\alpha}(x)}{DS^{q^n_\alpha}(y)}.\] Moreover,
\[ \min_{\substack{\alpha \in \A,\,  0 \leq j < q^n_\alpha, \\ x, y \in I^n_\alpha(f)}} \frac{Df^{j}(x)}{Df^{j}(y)} = 1 + O(c^n)=  \max_{\substack{\alpha \in \A, \, 0 \leq j < q^n_\alpha,\\ x, y \in I^n_\alpha(f)}} \frac{Df^{j}(x)}{Df^{j}(y)},\]
for some $0 < c < 1.$
\end{lemma}
\begin{proof}

\end{proof}
\end{comment}
\subsection{Proof of Proposition \ref{prop: cohomological_equation}}

\begin{proof}[Proof of Proposition \ref{prop: cohomological_equation}]
For any bounded $\varphi: [0, 1) \rightarrow \R$, and any $m \geq 0$, denote by $\mathcal \mathcal S_m^f$ the $m$-th Birkhoff sum of $\varphi$ w.r.t. $f$. By decomposing Birkhoff sums of length $m$ into Birkhoff sums along towers obtained via $\mathcal R$ (special Birkhoff sums) we get
\begin{equation}
\label{eq: sum_decomposition}
\big\| \mathcal S_m^f \varphi \big\|_{[0, 1)} \leq \sum_{n \geq 0} \big\|\HC{n}{n + 1}\big\|\max_{\alpha \in \A} \big\|\mathcal S_{q^n_\alpha}^f \varphi \big\|_{I^n_\alpha(f)}.
\end{equation}
For details on this decomposition, we refer the interested reader to \cite[Section 2.2.3]{marmi_cohomological_2005}.
Notice that for any $m \geq 0$, 
\begin{equation}\label{eq:BS} \mathcal S_m^f(\log DS \circ h - \log Df) = \log\frac{DS^m \circ h}{Df^m}.\end{equation}
Pick $\varphi = \log DS \circ h - \log Df$. Notice that $\varphi \in C^0\big(\bigsqcup_{\alpha \in \A} I_\alpha \big)$.  By \eqref{eq:BS} together with Corollary \ref{cor: exp_convergence} and Proposition \ref{prop: bound_slopes} we get
\[\max_{\alpha \in \A} \big\|\mathcal S_{q^n_\alpha}^f \varphi \big\|_{I^n_\alpha(f)}= O(c^n),\]
for some $0 < c < 1$. By the previous equation, (\ref{eq: sum_decomposition}) and Condition  \ref{Roth_type}, for any $\tau > 0$ there exists $C > 0$ such that 
\[\| \mathcal S_m^f \varphi \big\|_{[0, 1)}  \leq C \sum_{n \geq 0}  e^{n(\tau - |\log c|)},\]
for any $m \geq 0$.  Picking $\tau < |\log c|$, it follows that 
\[ \sup_{m \geq 0} \| \mathcal S_m^f \varphi \big\|_{[0, 1)} < +\infty.\]
Hence, by Proposition \ref{prop: cohomological_equation0}, 
there exists $\psi: [0, 1) \to \R$ continuous verifying (\ref{eq: cohomological_equation}).

Furthermore, if $f$ and $S$ are break-equivalent circle homeomorphisms, the function $\varphi$ above is a well-defined continuous function on the circle. Then, considering $\varphi$ and $f$ as continuous functions on $\T$, by Gottschalk-Hedlund's Theorem there exists $\psi: \T \to \R$ continuous  verifying (\ref{eq: cohomological_equation}).
\end{proof}

%\textbf{Remark:} If we remove the break-equivalence hypothesis in the previous proposition, the function $\varphi = \log Dg \circ h - \log Df$ is not necessarily continuous. However, the same proof plus a modified version of the Gottschalk-Hedlund theorem for IETs (see Yoccoz \cite{}) allows us to find a solution $\psi$, not necessarily continuous, to the cohomological equation. 

\subsection{Proof of Proposition \ref{prop: regular_conjugacy}}

%\[  \min_{\substack{\alpha \in \A, \\ 0 \leq j < q^n_\alpha}} \inf_{\substack{x \in I^n_\alpha(f), \\  y\in I^n_\alpha(g)}} \frac{Df^{j}(x)}{Dg^{j}(y)}  \to 1\]
%\[ \lim_{n \to \infty} \min_{\substack{\alpha \in \A, \\ 0 \leq j < q^n_\alpha}} \inf_{\substack{x \in I^n_\alpha(f), \\  y\in I^n_\alpha(g)}} \frac{Df^{j}(x)}{Dg^{j}(y)} = 1 =  \lim_{n \to \infty} \max_{\substack{\alpha \in \A, \\ 0 \leq j < q^n_\alpha}} \sup_{\substack{x \in I^n_\alpha(f), \\  y\in I^n_\alpha(g)}} \frac{Df^{j}(x)}{Dg^{j}(y)}  \]
%\[ 1 = \lim_{n \to \infty} \inf \left\{ \left. \frac{Df^{j}(x)}{Dg^{j}(y)} \right| \alpha \in \A; 0 \leq j < q^n_\alpha;  x \in I^n_\alpha(f);  y\in I^n_\alpha(g) \right\} \]
%By Proposition 7 in \cite{yoccoz_echanges_2005} if $f$ and $T$ have the same rotation number then there exists a $C$ non-decreasing conjugacy $h$ such that $T\circ h=h\circ f$. Let us recall the following result.

\begin{lemma}\label{lm: conjisLandcont}
Let $f$, $S$ be as in Theorem \ref{thm: affine_shadow} and assume that $f$ is semi-conjugated to $S$ via $h$. Then $h$ is a Lipschitz homeomorphism.
\end{lemma}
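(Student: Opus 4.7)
The idea is to extract both Lipschitz continuity and injectivity of $h$ simultaneously from Proposition \ref{prop: cohomological_equation} together with the partition estimates already in hand. Applying Proposition \ref{prop: cohomological_equation} yields a continuous $\psi:[0,1]\to\R$ with
\[
\log DS\circ h-\log Df=\psi\circ f-\psi.
\]
Summing this identity along an $f$-orbit and using the chain rule produces, for each $n\geq 1$, $\alpha\in\A$, $0\leq j<q^n_\alpha$ and $x$ in the interior of $I^n_\alpha(f)$, the telescoped equation $\log DS^{j}(h(x))-\log Df^{j}(x)=\psi(f^{j}(x))-\psi(x)$, whose right-hand side is bounded by $2\|\psi\|_\infty$. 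Since $S$ is piecewise affine and the iterates $S^{i}(h(x))$ for $0\leq i<q^n_\alpha$ remain inside a single affine branch, the quantity $\log DS^{j}(h(x))$ coincides with the constant $\log DS^{j}|_{I^n_\alpha(S)}$, so I obtain a uniform two-sided bound
\[
C^{-1}\;\leq\;\frac{DS^{j}|_{I^n_\alpha(S)}}{Df^{j}(x)}\;\leq\;C,
\]
valid for every such $n,\alpha,j,x$.

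Combining this with the exact identity $|S^{j}(I^n_\alpha(S))|=DS^{j}|_{I^n_\alpha(S)}\cdot|I^n_\alpha(S)|$, the mean-value expression $|f^{j}(I^n_\alpha(f))|=Df^{j}(\xi)\cdot|I^n_\alpha(f)|$ for some $\xi\in I^n_\alpha(f)$, and the uniform bounds on $|I^n_\alpha(f)|/|I^n_\alpha(S)|$ coming from Lemma \ref{lem: interval_bounds}(3), I obtain constants $c_1,c_2>0$ such that
\[
c_1|I|\;\leq\;|h(I)|\;\leq\;c_2|I|,\qquad I\in\mathcal{P}^{n}(f):=\{f^{j}(I^n_\alpha(f))\}_{\alpha,\,0\leq j<q^n_\alpha},
\]
where the identification $h(f^{j}(I^n_\alpha(f)))=S^{j}(I^n_\alpha(S))$ comes from iterating the semi-conjugacy relation $h\circ f=S\circ h$ and from $h(I^n_\alpha(f))=I^n_\alpha(S)$ (which follows from $\gamma(f)=\gamma(S)$ together with the recursive definition of the Rauzy--Veech partition).

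Once these partition-level comparisons are in hand, the Lipschitz estimate is routine: for any $x<y$ I take $n$ so large that $\max_{I\in\mathcal{P}^{n}(f)}|I|\ll y-x$, cover $[x,y]$ by the intervals of $\mathcal{P}^{n}(f)$ that it contains together with at most two partial intervals at the endpoints, sum the upper bounds $|h(I)|\leq c_2|I|$, and pass to the limit using the continuity of $h$, obtaining $h(y)-h(x)\leq c_2(y-x)$. For injectivity I argue by contradiction: if $h$ were constant on some nondegenerate interval $[a,b]$, then for $n$ sufficiently large $[a,b]$ would contain at least one $I\in\mathcal{P}^{n}(f)$, and the lower bound $|h(I)|\geq c_1|I|>0$ would contradict $h$ being constant on $[a,b]$. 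Both arguments require that $\max_{I\in\mathcal{P}^{n}(f)}|I|\to 0$, which I obtain by invoking Theorem \ref{thm: affconj} (so that $S$ is topologically conjugated to the minimal IET $T$), observing that intervals of $\mathcal{P}^{n}(T)$ are translates of the bases $I^n_\alpha(T)$ and hence shrink, transferring this to $\mathcal{P}^{n}(S)$ via uniform continuity of the $C^0$ conjugacy, and finally to $\mathcal{P}^{n}(f)$ using the comparison $|I|\leq c_1^{-1}|h(I)|$.

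The main obstacle is the first step: ensuring that the cohomological-equation estimate lifts cleanly to the pointwise ratio $DS^{j}|_{I^n_\alpha(S)}/Df^{j}(x)\asymp 1$ on every floor $0\leq j<q^n_\alpha$, not just at the top floor $j=q^n_\alpha$ where the EC Condition (via Corollary \ref{cor: exp_convergence} and Proposition \ref{prop: bound_slopes}) already provides asymptotic control. The intermediate floors admit no analogous a priori bound, and it is precisely the existence of a continuous transfer function $\psi$ that converts the Birkhoff-summable cocycle $\log DS\circ h-\log Df$ into a coboundary and delivers the required uniform control.
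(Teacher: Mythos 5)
Your proposal is correct and follows essentially the same route as the paper: both hinge on Proposition \ref{prop: cohomological_equation} to convert $\log DS\circ h-\log Df$ into the coboundary $\psi\circ f-\psi$, on the affinity of $S$ along tower floors, on Lemma \ref{lem: interval_bounds} to compare the partitions of $f$ and $S$, and on Theorem \ref{thm: affconj} to force the mesh of $\mathcal P_n$ to zero, yielding the two-sided bound $|h(J)|\asymp|J|$ on tower floors and hence Lipschitz continuity and injectivity. The only (cosmetic) difference is that you telescope the coboundary from the base up to floor $j$ and compare base lengths $|I^n_\alpha(f)|/|I^n_\alpha(S)|$, whereas the paper telescopes from floor $j$ up to the top of the tower and compares the images $|\mathcal R^n f(I^n_\alpha(f))|/|\mathcal R^n S(I^n_\alpha(S))|$, and it phrases injectivity as the absence of wandering intervals for $f$.
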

\begin{proof}
	Let $\alpha \in \A$, $n \geq 0$ and $0 \leq j < q^n_\alpha.$ Since $h(f^j(I^n_\alpha(f))) = S^j(I^n_\alpha(S))$, by the intermediate value theorem,
	\begin{equation*}
	\label{eq: conjugacy_estimates}
	\frac{|h(f^j(I^n_\alpha(f)))|}{|f^j(I^n_\alpha(f))|} = \frac{|{\mathcal R^nS}(I^n_\alpha(S))|}{|{\mathcal R^nf}(I^n_\alpha(f))|} \frac{Df^{q^n_\alpha - j}(x)}{DS^{q^n_\alpha - j}(y)},
	\end{equation*}
	for some $x \in f^j(I^n_\alpha(f))$ and $y \in S^j(I^n_\alpha(S))$. Using Proposition \ref{prop: cohomological_equation}, we can rewrite the previous equation as 
	\begin{equation*}
	\begin{split}& \frac{|h(f^{j}(I^n_{\alpha}(f)))|}{|f^{j}(I^n_{\alpha}(f))|}  = \frac{|{\mathcal R^nS}(I^n_{\alpha}(S))|}{|{\mathcal R^nf}(I^n_{\alpha}(f))|} \frac{DS^{q^n_{\alpha} - j}(h(x))}{DS^{q^n_{\alpha} - j}(y)}\frac{Df^{q^n_{\alpha} - j}(x)}{DS^{q^n_{\alpha} - j}(h(x))} \\
	& = \frac{|{\mathcal R^nS}(I^n_{\alpha}(S))|}{|{\mathcal R^nf}(I^n_{\alpha}(f))|} \frac{DS^{q^n_{\alpha} - j}(h(x))}{DS^{q^n_{\alpha} - j}(y)} e^{-\psi \circ f^{q^n_{\alpha} - j}(x)}e^{\psi(x)}. \end{split}
	%& = \frac{|{\mathcal R^nS}(I^n_{\alpha}(S))|}{|{\mathcal R^nf}(I^n_{\alpha}(f))|} e^{-\psi \circ f^{q^n_{\alpha} - j_n}(x)}e^{\psi(x)}
	\end{equation*}
	Note that the points $y$ and $h(x) $ belong to $S^j(I^n_{\alpha}(S))$, which is a continuity interval of $S^k$ for every $k=0,\ldots,q_{\alpha}^n-j$. This, together with the fact that $S$ is an AIET, yields $ \frac{DS^{q^n_{\alpha} - j}(h(x))}{DS^{q^n_{\alpha} - j}(y)}  = 1.$
	By Lemma \ref{lem: interval_bounds} and Proposition \ref{prop: cohomological_equation} and the previous equation,  %and \ref{lem: derivative_bounds}, 
	\begin{equation}
	\label{eq: ratSf}
	 \sup_{n \geq 0} \max_{J \in \mathcal{P}_n(f)} \max\left\{ \frac{|h(J)|}{|J|}, \frac{|J|}{|h(J)|} \right\}< +\infty,
	 \end{equation}
	where $\mathcal P_n(f)=\{f^j(I_\alpha^n(f));\ \alpha\in\mathcal A,\ 0\le j<q^n_\alpha\}$.
	Since any interval  $(a, b) \subset [0, 1)$ can be expressed as a countable union of intervals in $\bigcup_{n \geq 0} \mathcal{P}_n(f),$ it follows that $h$ is a Lipschitz function. 
	
	Since by Theorem \ref{thm: affconj}, $S$ is topologically conjugated to a minimal IET, we have
\[
\lim_{n\to\infty}\max_{\substack{\alpha\in\mathcal A\\ 0\le j<q_{\alpha}^n}}|S^j(I_{\alpha}^{n}(S))|=0.
\]
Hence by \eqref{eq: ratSf} we obtain
\[
\lim_{n\to\infty}\max_{\substack{\alpha\in\mathcal A\\ 0\le j<q_{\alpha}^n}}|f^j(I_{\alpha}^{n}(f)|=0.
\]
Since for every $n\ge 0$, the wandering intervals are always included in the elements of the partition $\mathcal P_n(f)$, the above expression implies that $f$ does not have wandering intervals, which in turn implies that $h$ is a conjugacy. 

	\end{proof}

\begin{proof}[Proof of Proposition \ref{prop: regular_conjugacy}]
By Lemma \ref{lm: conjisLandcont}, the map $h$ is a Lipschitz homeomorphism. It follows that it is differentiable almost everywhere. Let $x_0 \in [0, 1)$ such that $h$ is differentiable. WLOG, we may suppose that $x_0$ is not the endpoint of any interval in  $\bigcup_{n \geq 0} \mathcal{P}_n(f).$ Then, there exist sequences $(\alpha_n)_{n \geq 1}\subset \A^{\N}$ and $(j_n)_{n \geq 1} \subset \N$ such that 
\[ 0 \leq j_n < q^n_{\alpha_n}, \hskip1cm \{x_0\} = \bigcap_{n \geq 1}f^{j_n}(I^n_{\alpha_n}(f)).\]
Therefore
\[ Dh(x_0) = \lim_{n \to \infty}  \frac{|h(f^{j_n}(I^n_{\alpha_n}(f)))|}{|f^{j_n}(I^n_{\alpha_n}(f))|}.\]
By the mean value theorem, there exist sequences $x_n \in f^{j_n}(I^n_{\alpha_n}(f))$ and $y_n \in S^{j_n}(I^n_{\alpha_n}(S))$ such that 
\[   \frac{|h(f^{j_n}(I^n_{\alpha_n}(f)))|}{|f^{j_n}(I^n_{\alpha_n}(f))|} = \frac{|S^{j_n}(I^n_{\alpha_n}(S))|}{|f^{j_n}(I^n_{\alpha_n}(f))|}= \frac{|{\mathcal R^nS}(I^n_{\alpha_n}(S))|}{|{\mathcal R^nf}(I^n_{\alpha_n}(f))|} \frac{Df^{q^n_{\alpha_n} - j_n}(x_n)}{DS^{q^n_{\alpha_n} - j_n}(y_n)}.\]
Using Proposition \ref{prop: cohomological_equation} we can rewrite the previous equation as 
\begin{align*}
& \frac{|h(f^{j_n}(I^n_{\alpha_n}(f)))|}{|f^{j_n}(I^n_{\alpha_n}(f))|}  = \frac{|{\mathcal R^nS}(I^n_{\alpha_n}(S))|}{|{\mathcal R^nf}(I^n_{\alpha_n}(f))|} \frac{DS^{q^n_{\alpha_n} - j_n}(h(x_n))}{DS^{q^n_{\alpha_n} - j_n}(y_n)}\frac{Df^{q^n_{\alpha_n} - j_n}(x_n)}{DS^{q^n_{\alpha_n} - j_n}(h(x_n))} \\
 & = \frac{|{\mathcal R^nS}(I^n_{\alpha_n}(S))|}{|{\mathcal R^nf}(I^n_{\alpha_n}(f))|} \frac{DS^{q^n_{\alpha_n} - j_n}(h(x_n))}{DS^{q^n_{\alpha_n} - j_n}(y_n)} e^{-\psi \circ f^{q^n_{\alpha_n} - j_n}(x_n)}e^{\psi(x_n)}.
%& = \frac{|{\mathcal R^nS}(I^n_{\alpha_n}(S))|}{|{\mathcal R^nf}(I^n_{\alpha_n}(f))|} \frac{DS^{q^n_{\alpha_n} }(S^{-j_n}(h(x_n)))}{DS^{q^n_{\alpha_n}}(S^{-j_n}(y_n))} \frac{DS^{j_n}(S^{-j_n}(y_n))}{DS^{j_n}(S^{-j_n}(h(x_n)))}  e^{\psi \circ f^{q^n_{\alpha_n} - j_n}(x_n)} e^{\psi(x_n)}.
\end{align*}
We now analyze each term in the previous equation separately. 

By Lemma \ref{lem: interval_bounds}, $\frac{|{\mathcal R^nS}(I^n_{\alpha_n}(S))|}{|{\mathcal R^nf}(I^n_{\alpha_n}(f))|} \to C,$ for some constant $C > 0$. 

Similarly to the proof of Lemma \ref{lm: conjisLandcont}, the points $y_n$ and $h(x_n) $ belong to $S^{j_n}(I^n_{\alpha_n}(S))$, which is a continuity interval of $S^k$ for every $k=0,\ldots,q_{\alpha_n}^n-j$. Since $S$ is an AIET, we get  $$\frac{DS^{j_n}(S^{-j_n}(h(x_n)))}{DS^{j_n}(S^{-j_n}(y_n))} = 1.$$

%By Lemma \ref{lem: derivative_bounds}, $ \frac{DS^{j_n}(S^{-j_n}(y_n))}{DS^{j_n}(S^{-j_n}(h(x_n)))}  \to 1$.

%By Corollary \ref{cor: exp_convergence}, $ \frac{DS^{q^n_{\alpha_n} }(S^{-j_n}(h(x_n)))}{DS^{q^n_{\alpha_n}}(S^{-j_n}(y_n))} \to 1$.

Since $ f^{q^n_{\alpha_n} - j_n}(x_n) \to 0$, we have $e^{\psi \circ f^{q^n_{\alpha_n} - j_n}(x_n)} \to e^{\psi(0)}$.

Therefore, since $x_n \to x_0$, we have $Dh(x_0) = Ce^{\psi(0)}e^{\psi(x_0)}$. Moreover, since $h$ is Lipschitz and the derivative of $h$ at almost every point coincides with the continuous function $Ce^{\psi(0)}e^{\psi(\cdot)}$, it follows that $h$ is of class $C^1$

\end{proof}

%\textbf{Remark:}  Notice that the proof that the conjugacy $h$ is Lipschitz does not make use of the break-equivalence hypothesis. 

\bibliographystyle{acm}
\bibliography{Bibliography.bib}
\end{document}